\theoremstyle{plain}
\newtheorem{thm}{Theorem}
  \theoremstyle{definition}
  \newtheorem*{thm*}{Theorem}
  \theoremstyle{remark}
  \newtheorem{rem}[thm]{Remark}
  \theoremstyle{plain}
  \newtheorem{prop}[thm]{Proposition}
  \theoremstyle{plain}
  \newtheorem{lem}[thm]{Lemma}
  \theoremstyle{plain}
 \theoremstyle{definition}
  \theoremstyle{remark}
  \newtheorem*{rem*}{Remark}
  \theoremstyle{definition}
\newtheorem*{question*}{\it{QUESTION}}
\theoremstyle{plain}
\newcommand{\N}{\mathbb{N}}
\newcommand{\R}{{\mathbb{R}}}
\newcommand{\C}{{\mathbb{C}}}
\newcommand{\dd}{{\rm d}}
\newcommand{\ii}{{\rm i}}
\newcommand{\diag}{\mathop\mathrm{diag}\nolimits}
\newcommand{\sign}{\mathop\mathrm{sign}\nolimits}
\renewcommand{\Re}{\mathop\mathrm{Re}\nolimits}
\renewcommand{\Im}{\mathop\mathrm{Im}\nolimits}
\newcommand{\supp}{\mathop\mathrm{supp}\nolimits}
\def\pFq#1#2#3#4#5{ 
  {}_{#1}F_{#2}\biggl(\genfrac..{0pt}{}{#3}{#4}\biggl|\,#5\biggr)
}
\definecolor{DarkGreen}{rgb}{0,0.5,0.1} 
\definecolor{DarkRed}{rgb}{0.54,0,0} 
\newcommand\soutP{\bgroup\markoverwith
{\textcolor{DarkGreen}{\rule[.5ex]{2pt}{1pt}}}\ULon}
\begin{document}

\title[]{Asymptotic root distribution of Charlier polynomials with large negative parameter}

\author{Petr Blaschke}
\address[Petr Blaschke]{Mathematical Institute in Opava, Silesian University in Opava, Na Rybn{\' i}{\v c}ku~626/1, 746 01 Opava, Czech Republic
	}
\email{Petr.Blaschke@math.slu.cz}

\author{Franti\v sek \v Stampach}
\address[Franti{\v s}ek {\v S}tampach]{
	Department of Mathematics, Faculty of Nuclear Sciences and Physical Engineering, Czech Technical University in Prague, Trojanova~13, 12000 Praha~2, Czech Republic
	}	
\email{stampach@fjfi.cvut.cz}

\subjclass[2020]{33C45, 26C10, 30C15, 30E15}

\keywords{Charlier polynomials, asymptotic root distribution, variable parameter, non-standard parameter}

\date{\today}

\begin{abstract}
We analyze the asymptotic distribution of roots of Charlier polynomials with negative parameter depending linearly on the index. The roots cluster on curves in the complex plane. We determine implicit equations for these curves and deduce the limiting density of the root distribution supported on these curves. The proof is based on a determination of the limiting Cauchy transform in a specific region and a careful application of the saddle point method. The obtained result represents a solvable example of a
more general open problem. 
\end{abstract}

\maketitle

\section{Introduction}

We study an asymptotic distribution of roots of a classical family of orthogonal polynomials, namely the Charlier polynomials, in a regime when they are not orthogonal with respect to a positive measure supported on $\R$ as their parameter assumes non-standard values. This means that the studied family of polynomials cannot be identified with characteristic polynomials of a Hermitian Jacobi matrix and their roots are not confined to the real line. Rather than that they cluster in complex curves. We give an exact description of these curves, sometimes referred to as the zero attractors for the studied polynomial sequence, and deduce the asymptotic density of the distribution of the roots.

The Charlier polynomials are defined by the formula
\[
 C_{n}^{(a)}(x):=\pFq{2}{0}{-n,-x}{-}{-\frac{1}{a}}
 =\sum_{k=0}^{n}\binom{n}{k}\frac{(-x)_{k}}{a^{k}},
\]
where $n\in\N_{0}$ and $a\neq0$ is a parameter. For basic definitions and properties of the hypergeometric functions and Charlier polynomials, we refer the reader to the Askey scheme~\cite{koe-les-swa_10}. If the parameter $a$ is positive, Charlier polynomials are orthogonal with respect to the Poisson probability measure, see~\cite[Eq~9.14.2]{koe-les-swa_10}. We assume the parameter to be negative and, in addition, dependent linearly on the index. In such a setting, we further scale and slightly transform the main variable to keep roots in a rectangular window with a side of unit length for any index $n$. As a~result, we consider polynomials
\begin{equation}
 P^{\mathrm{C}}_{n}(z;a):=C_{n}^{(-an)}\left(zn-an-1\right),
\label{eq:def_pol_C}
\end{equation}
for $n\in\N$ and $a>0$. Since all coefficients of $P_{n}^{\mathrm{C}}(\,\cdot\,;a)$ are real, the roots of $P_{n}^{\mathrm{C}}(\,\cdot\,;a)$ are distributed symmetrically in $\C$ with respect to the real line. The main result on the asymptotic distribution of roots of $P^{\mathrm{C}}_{n}(\,\cdot\,;a)$, as $n\to\infty$, is given in Theorem~\ref{thm:charlier}. This result represents a solvable model of a general problem whose solution is currently out of reach; more details given in Section~\ref{sec:sampling_jacobi}.

The problem on the asymptotic distribution of roots of orthogonal polynomials with variable and non-standard parameters has been extensively studied for many families. Our setting is reminiscent to a similar problem concerning the Laguerre polynomials with variable negative parameter that received a lot of attention~\cite{mar-fin-mar-gon-ori_01, kui-mcl_01, kui-mcl_04, dai-won_08, dia-ori_11, ati-mar-fin-mar-gon-tha_14}. In fact, the case of Laguerre polynomials has been studied in a greater generality, as the parameter $a_{n}$ in the studied sequence of Laguerre polynomials $L_{n}^{(a_{n})}(nz)$ is allowed to be a negative multiple of $n$ only asymptotically, i.e., $\lim_{n\to\infty}a_{n}/n=a<0$. In such a setting, the asymptotic distribution of roots exhibits remarkable complexity. Moreover, as a non-Hermitian orthogonality has been established for this family, it is a nice example for a demonstration of the powerful steepest descent Riemann--Hilbert problem method. 

As far as the asymptotic properties of Charlier polynomials are concerned, asymptotic expansions for $C_{n}^{(a)}(nz)$, as $n\to\infty$, with the fixed parameter $a>0$ and various restrictions on $z$, were found in~\cite{goh_ca98, rui-wong_94, lop-tem_04} using primarily the saddle point method. For a more recent application of the turning point theory in asymptotic analysis of the Charlier polynomials, see~\cite{hua-lin-zha_21}. As it is impossible to cover all related works, we mention at least few more papers on asymptotic behavior of classical orthogonal polynomials with varying non-standard parameters~\cite{mar-fin-mar-gon-ori_99,  kui-mar-fin_04, won-zha_06, mar-fin-ori_05, wan-qui-won_13, wan-won_jmaa16}, hypergeometric polynomials~\cite{dur-gui_01, dri-jor_03, dri-joh_07, zhou-sri-wan_12, aba-bog_16, aba-bog_18}, and other interesting families~\cite{boy-goh_07, boy-goh_08, boy-goh_10, son-won_17, sta_21}.

The paper is organized as follows. In Section~\ref{sec:main}, the main result is stated as Theorem~\ref{thm:charlier} and several related remarks as well as numerical illustrations are added. Next, general steps of the applied method are summarized. The proof of Theorem~\ref{thm:charlier} is gradually worked out in Section~\ref{sec:charlier}. First, roots of~$P_{n}^{\mathrm{C}}$ are shown to be confined to a rectangular domain in~$\C$ uniformly in $n\in\N$, which simplifies the subsequent analysis. Second, by using a contour integral representation for $P_{n}^{\mathrm{C}}$ together with the saddle point method, a formula for the limiting Cauchy transform of a sequence of the root counting measures of $P_{n}^{\mathrm{C}}$ is obtained in Theorem~\ref{thm:cauchy_transf_charlier}. By inspection of analytic properties of the limiting Cauchy transform, the limiting measure is deduced and proved to be determined completely. Lastly, a rigorous but technical justification of the application of the saddle point method in the proof of Theorem~\ref{thm:cauchy_transf_charlier} is postponed to the last subsection of Section~\ref{sec:charlier}. Finally, Section~\ref{sec:sampling_jacobi} interprets Theorem~1 as a solvable model for a general open problem on the asymptotic eigenvalue distribution of complex Jacobi sampling matrices, discusses several related works, and closes with few concluding remarks.

\section{The main result and the method}\label{sec:main}

\subsection{Main results}

First, we need to introduce a notation. The asymptotic distribution of roots of polynomials $P^{\mathrm{C}}_{n}(\,\cdot\,;a)$, for $n\to\infty$, is encoded in the complex function
\begin{equation}
f(\xi;z):=(a-z)\log(1+\xi)+\log\xi-a\xi.
\label{eq:def_f_charlier}
\end{equation}
Let us denote its two critical points, i.e., solutions of the equation $\partial_{\xi} f(\xi;z)=0$, by
\begin{equation}
\xi_{\pm}=\xi_{\pm}(z;a):=\frac{1}{2a}\left(1-z\pm\sqrt{(1-z)^{2}+4a}\right).
\label{eq:xi_plus_minus_charlier}
\end{equation}
If not stated otherwise, the multi-valued functions, such as the logarithm and the square root, assume their principal values.

The parameter $z$ is restricted to the rectangular domain $(0,1)+2\ii\sqrt{a}(-1,1)$, in which the equation
\begin{equation}
\Re f(\xi_{+};z)=\Re f(\xi_{-};z)
\label{eq:re_f_eq}
\end{equation}
determines an arc of a simple smooth curve connecting a unique point in the interval $(0,1)$ with the upper-left corner $1+2\ii\sqrt{a}$. This claim is proven in Proposition~\ref{lem:Omega0_basic_prop_charlier} and is illustrated in Figure~\ref{fig:cut_thm1}. Let $\gamma:[0,1]\to\C$ denotes a parametrization of the arc with the starting point $\gamma(0)=1+2\ii\sqrt{a}$ and the end point $\gamma(1)\in(0,1)$. Then the asymptotic root distribution of polynomials $P^{\mathrm{C}}_{n}(\,\cdot\,;a)$, for $n\to\infty$, works in two regimes depending on the value of $a$ and is as follows.

\begin{thm}\label{thm:charlier}
 Let $a>0$, $f$ and $\xi_{\pm}$ functions given by \eqref{eq:def_f_charlier} and \eqref{eq:xi_plus_minus_charlier}, and $\gamma$ as defined above. Then, for $n\to\infty$, the sequence of root counting measures of polynomials $P^{\mathrm{C}}_{n}(\,\cdot\,;a)$ converges weakly to a probability measure $\mu=\mu_{1}+\mu_{2}$, for which the following holds true:
  \begin{enumerate}[{\upshape i)}]
  \item Measure $\mu_{1}$ is symmetric with respect to the real line, supported on the image of $\gamma$ and its complex conjugate $\bar{\gamma}$, and absolutely continuous. For $0\leq\alpha<\beta\leq 1$, the measure of the arc $\gamma([\alpha,\beta])$ of the curve $\gamma$ connecting the points $\gamma(\alpha)$ and $\gamma(\beta)$ is given by the difference
  \[
   \mu_{1}\left(\gamma([\alpha,\beta]\right)=\rho\left(\gamma(\beta)\right)-\rho\left(\gamma(\alpha)\right),
  \]
where 
  \[
   \rho(z):=\frac{1}{2\pi\ii}\left(f(\xi_{+}(z;a);z)-f(\xi_{-}(z;a);z)\right).
  \]
 \item If $a\geq\gamma(1)$, then $\mu_{2}=0$. If $a<\gamma(1)$, then $\mu_{2}$ is supported on the real interval $[a,\gamma(1)]$, is absolutely continuous, and its density reads
  \[
    \frac{\dd\mu_{2}}{\dd x}(x)=1, \quad x\in[a,\gamma(1)].
  \]
 \end{enumerate}
\end{thm}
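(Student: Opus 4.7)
Following the roadmap outlined in the excerpt, my plan proceeds in four stages. First, before any asymptotic analysis, I would establish as preparation that $\bigcup_{n}\mathrm{supp}\,\mu_{n}$ lies in a fixed compact rectangle in $\C$. This reduces weak convergence of the root counting measures to pointwise convergence of the associated Cauchy transforms (or logarithmic potentials) off that rectangle, and justifies the use of standard potential-theoretic tools.

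Second, I would derive a Cauchy-type contour integral representation for $P^{\mathrm{C}}_{n}(z;a)$. Starting from the hypergeometric formula for $C_{n}^{(a)}$ and substituting $a\mapsto -an$, $x\mapsto zn-an-1$, I expect an expression of the shape
\[
 P^{\mathrm{C}}_{n}(z;a) = \frac{1}{2\pi\ii}\oint_{\Gamma}e^{nf(\xi;z)}\,g(\xi;z)\,\dd\xi
\]
for an appropriate contour $\Gamma$ encircling $\xi=0$ and a lower-order factor $g$, with $f$ as in~\eqref{eq:def_f_charlier}. Then $\partial_{\xi}f=0$ returns exactly the saddles $\xi_{\pm}$ of~\eqref{eq:xi_plus_minus_charlier}. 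Deforming $\Gamma$ onto a steepest-descent path through the dominant saddle yields, for $z$ off the curve $\gamma$,
\[
 \lim_{n\to\infty}\frac{1}{n}\log P^{\mathrm{C}}_{n}(z;a) = f(\xi_{*}(z);z),
\]
where $\xi_{*}\in\{\xi_{+},\xi_{-}\}$ is the one with maximal $\Re f$. Differentiating in $z$ and using $\partial_{\xi}f(\xi_{\pm};z)=0$ produces the limiting Cauchy transform
\[
 C(z)=\partial_{z}f(\xi_{*}(z);z)=-\log\!\bigl(1+\xi_{*}(z;a)\bigr),
\]
whose branch choice swaps across~$\gamma$; that is precisely condition~\eqref{eq:re_f_eq}.

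Third, I would reconstruct the limiting measure from $C$ by Plemelj--Sokhotski. The boundary jump $C_{+}-C_{-}$ along $\gamma$ is the line density of $\mu_{1}$; integrating along $\gamma$ and using $\partial_{z}f(\xi_{\pm};z)=-\log(1+\xi_{\pm})$ telescopes to
\[
 \mu_{1}(\gamma([\alpha,\beta]))=\rho(\gamma(\beta))-\rho(\gamma(\alpha))
\]
with $\rho$ as in the statement. For the piece $\mu_{2}$, a direct check shows $\xi_{-}(z;a)<-1$ for real $z>a$, so in the regime $a<\gamma(1)$ the function $\log(1+\xi_{*})$ carries a branch cut along $[a,\gamma(1)]$ whose jump is exactly $2\pi\ii$; the prefactor $\tfrac{1}{2\pi\ii}$ in Plemelj converts it to the constant Lebesgue density $1$. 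When $a\geq\gamma(1)$ this branch cut does not intersect the window and $\mu_{2}=0$. Uniqueness of the weak limit then follows because the Cauchy transform off the support, together with the total-mass normalisation (which also needs a short separate check), determines the measure.

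The hard part is the rigorous justification of the second stage: showing uniformly on compact subsets of the rectangular window that $\Gamma$ can be deformed onto steepest-descent paths through the correct saddle, while controlling contributions from the pole at $\xi=0$ and the branch point at $\xi=-1$, and tracking how the topology of these paths switches across $\gamma$. Precisely this switch produces the two distinct expressions for $C(z)$; accordingly the authors defer its technical justification to the final subsection of Section~\ref{sec:charlier}.
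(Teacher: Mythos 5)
Your roadmap coincides step for step with the paper's strategy: localization of all roots in a fixed rectangle (done there by realizing $a^{n}P_{n}^{\mathrm{C}}(z;a)=\det(z-J_{n}(a))$ for an explicit tridiagonal matrix and bounding its numerical range), a contour integral representation from the generating function, the saddle point method for the limiting Cauchy transform, Plemelj--Sokhotski on the arc $\Omega_{0}$ and on the real segment $[a,\gamma(1)]$, and a final verification that the total mass equals $1$. The formulas you predict for $\mu_{1}$ and $\mu_{2}$ are the correct ones (modulo a sign convention: the paper's integrand is $e^{-nf}$, so the dominant saddle is the one with the \emph{smaller} $\Re f$ and the limiting Cauchy transform is $+\log(1+\xi_{\pm})$ on $\Omega_{\mp}$), and your observation that $\xi_{-}(x;a)<-1$ exactly for $x>a$ is the right mechanism behind the unit density of $\mu_{2}$.

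However, the step you flag as ``the hard part'' is a genuine gap rather than a deferred technicality, and your sketch contains no idea for closing it. Two things are missing. First, even to define $\gamma$ and to know that the discontinuity set of the limiting Cauchy transform inside the window is a single simple arc joining a point of $(0,1)$ to the corner $1+2\ii\sqrt{a}$, one must prove the structure of the level set $\Re f(\xi_{+};z)=\Re f(\xi_{-};z)$; the paper does this in Proposition~\ref{lem:Omega0_basic_prop_charlier} by showing that $\partial_{x}\bigl(\Re f(\xi_{+};z)-\Re f(\xi_{-};z)\bigr)$ never vanishes in the rectangle, via an algebraic characterization of where $|1+\xi_{+}|=|1+\xi_{-}|$. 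Second, and more seriously, the assertion that $\Gamma$ ``can be deformed onto a steepest-descent path through the dominant saddle'' is actually \emph{false} for some $z$ in the window: the case analysis of Section~\ref{subsec:spm_charlier} exhibits a configuration (the ``opposite side interference'' with the branch cut $(-\infty,-1]$) in which no admissible Jordan contour through the dominant saddle satisfies assumption~(v) of Theorem~\ref{thm:saddle-point}. There the paper splits the contour into two arcs $\gamma^{(1)},\gamma^{(2)}$, collapses $\gamma^{(1)}$ onto the two sides of the cut to produce the saddle-point contribution multiplied by a non-vanishing oscillatory factor $\omega(\xi_{+};z)$, shows $\gamma^{(2)}$ is exponentially subdominant, and observes that $\omega$ cancels in the ratio $I_{n}'/I_{n}$, so the Cauchy-transform formula survives. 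The remaining problematic topologies are excluded by Lemmas~\ref{lem:crossings_imag_line_charlier} and~\ref{lem:crossings_the_cut_charlier}, which count intersections of the level curves with the imaginary axis and with the cut. Without these ingredients the limiting Cauchy transform --- and hence the theorem --- is not established.
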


\begin{figure}[htb!]
    \centering
        \includegraphics[width=0.99\textwidth]{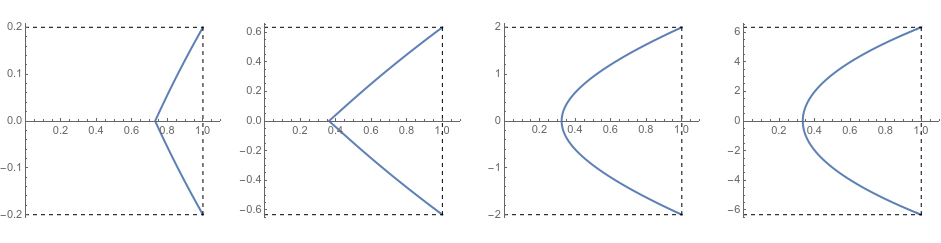}
        \caption{The curve determined by the equation $\Re f(\xi_{+};z)=\Re f(\xi_{-};z)$ in the rectangular window $(0,1)+2\ii\sqrt{a}(-1,1)$ for $a=0.01, 0.1,1,10.$}
    \label{fig:cut_thm1}
\end{figure}

The proof of Theorem~\ref{thm:charlier} is worked out in Section~\ref{sec:charlier}. The two scenarios of Theorem~\ref{thm:charlier} are illustrated in Figures~\ref{fig:thm1a} and~\ref{fig:thm1b}.

\begin{figure}[htb!]
    \centering
    \begin{subfigure}[c]{0.49\textwidth}
        \includegraphics[width=0.95\textwidth]{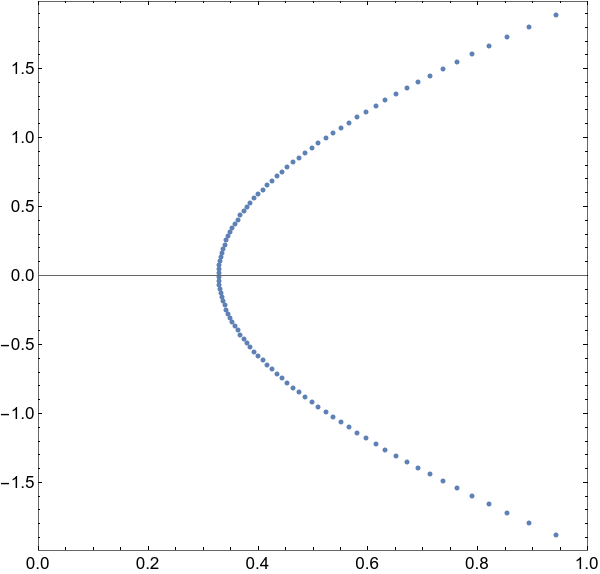}
        \caption{The roots of $P_{100}^{C}$}
    \end{subfigure}
    \begin{subfigure}[c]{0.49\textwidth}
        \includegraphics[width=0.95\textwidth]{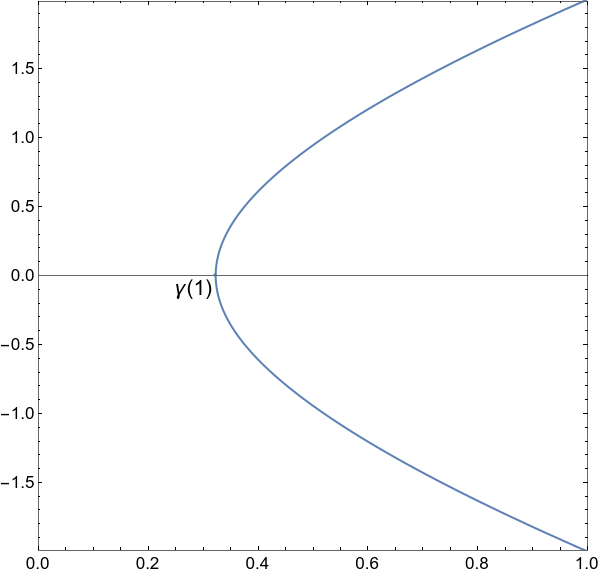}
        \caption{The support of $\mu$.}
    \end{subfigure}
    \caption{An illustration of Theorem~\ref{thm:charlier} with $a=1$ when $\mu_{2}$ is trivial.}
    \label{fig:thm1a}
\end{figure}

\begin{figure}[htb!]
    \centering
        \begin{subfigure}[c]{0.49\textwidth}
        \includegraphics[width=0.95\textwidth]{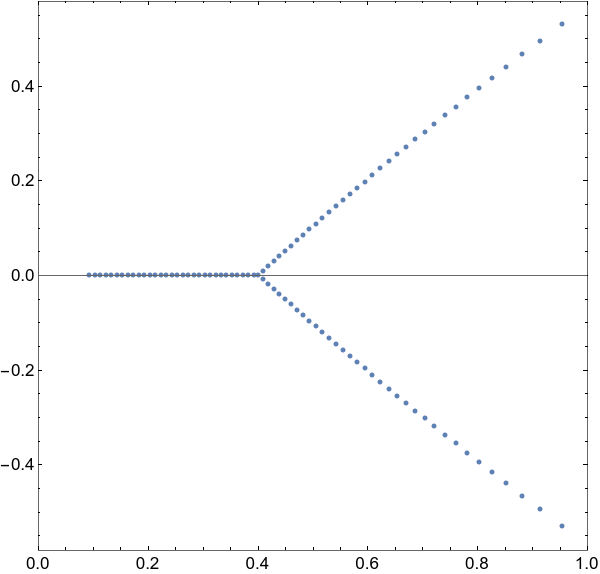}
        \caption{The roots of $P_{100}^{C}$}
    \end{subfigure}
    \begin{subfigure}[c]{0.49\textwidth}
        \includegraphics[width=0.95\textwidth]{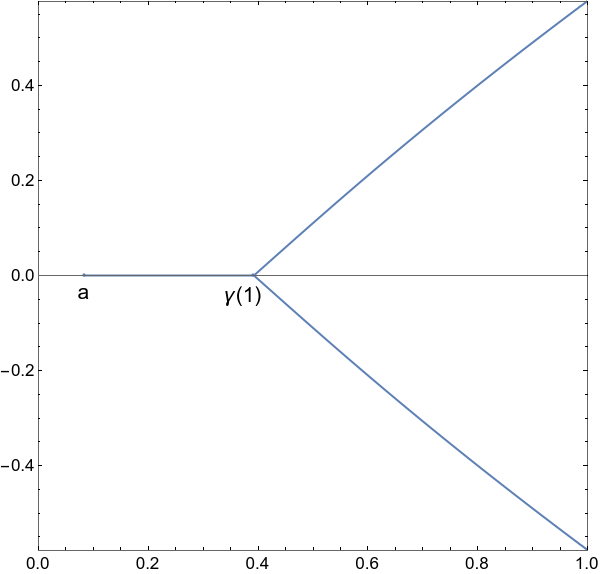}
        \caption{The support of $\mu$.}
    \end{subfigure}
    \caption{An illustration of Theorem~\ref{thm:charlier} with $a=1/12$ when $\mu_{2}$ is nontrivial}
    \label{fig:thm1b}
\end{figure}

\begin{rem}
The point $\gamma(1)$ depends on $a$ and coincides with the unique solution of the equation $\Re f(\xi_{+}(x;a);x)=\Re f(\xi_{-}(x;a);x)$ for $x\in(0,1)$. The equation does not seem to be explicitly solvable, however. For example, if we substitute $x=1-2\sqrt{a}\sinh t$, we arrive at the equation
\[
 2t-\sqrt{a}\left(e^{t}+e^{-t}\right)+\left[a-1+\sqrt{a}\left(e^{t}-e^{-t}\right)\right]\ln\frac{\sqrt{a}+e^{t}}{|\sqrt{a}-e^{-t}|}=0,
\]
which determines implicitly a positive solution $t_{0}=t_{0}(a)$, and $\gamma(1)=1-2\sqrt{a}\sinh t_{0}$.
Without going into details, let us remark the asymptotic behavior of $\gamma(1)$ for $a$ small and $a$ large is as follows:
\[
 \gamma(1)=1-\frac{2\sqrt{a}}{\sqrt{y_{0}^{2}-1}}+O(a), \quad a\to0+,
\]
and
\[
 \gamma(1)=\frac{1}{3}-\frac{4}{405a}+O\left(\frac{1}{a^{2}}\right), \quad a\to\infty,
\]
where $y_{0}\approx1.1997$ is the unique positive solution of the transcendental equation $y=\coth y$.
The dependence of $\gamma(1)$ on the parameter $a$ is nontrivial. Based on a numerical analysis, it seems that $\gamma(1)$, as function of $a>0$, first decays from the value $1$ to its global minimum of approximate value $0.2693$ attained at $a_{0}\approx0.2342$, and then, for $a>a_{0}$, $\gamma(1)$ is increasing to $1/3$.
\end{rem}

\begin{rem}
 The threshold value $a$, which distinguishes between $\mu_{2}=0$ and $\mu_{2}\neq0$ in Theorem~\ref{thm:charlier}, occurs when $a=\gamma(1)$. Since $\xi_{+}(a;a)=1/a$ and $\xi_{-}(a;a)=-1$ the equation $\Re f(\xi_{+}(a;a);a)=\Re f(\xi_{-}(a;a);a)$ simplifies to the transcendental equation
 \[
  ae^{1+a}=1,
 \]
 whose solution is approximately $a\approx0.2785$ and can be identified with the value of the Lambert $W$ function at the point $1/e$.
\end{rem}

\begin{rem}\label{rem:dens_more_explicit_charlier}
If we write $z=\gamma(t)$ in~\eqref{eq:re_f_eq}, differentiate with respect to $t$, and make use of formulas $\partial_{\xi} f(\xi_{\pm};z)=0$ and $\partial_{z} f(\xi_{\pm};z)=-\log(1+\xi_{\pm})$, we deduce that the curve $\gamma$ is a~solution of the first order differential equation
\[
\Re\left[\gamma'(t)\left(\log(1+\xi_{+}(\gamma(t);a))-\log(1+\xi_{-}(\gamma(t);a))\right)\right]=0, \quad t\in(0,1),
\]
In a slightly more explicit terms, if we parametrize $\gamma$ as $\gamma(t)=x(t)+2\ii\sqrt{a}(1-t)$, we obtain the ordinary differential equation 
\[
x'(t)\log\left|\frac{1+\xi_{+}}{1+\xi_{-}}\right|+2\sqrt{a}\arg\left(\frac{1+\xi_{+}}{1+\xi_{-}}\right)=0,
\]
where $\xi_{\pm}=\xi_{\pm}(x(t)+2\ii\sqrt{a}(1-t);a)$. The existence of such a~parametrization for the curve $\gamma$ follows from the proof of Proposition~\ref{lem:Omega0_basic_prop_charlier}. With the aid of this observation, the density of the limiting measure from the claim~(i) of Theorem~\ref{thm:charlier} can be expressed as
\begin{equation}
 \frac{\dd\mu_{1}}{\dd t}(t)=\frac{\sqrt{a}}{\pi}\left|\log\left(\frac{1+\xi_{+}}{1+\xi_{-}}\right)\right|^{2}\bigg/\log\left|\frac{1+\xi_{+}}{1+\xi_{-}}\right|, \quad t\in(0,1),
\label{eq:dens_charlier}
\end{equation}
where $\xi_{\pm}=\xi_{\pm}(x(t)+2\ii\sqrt{a}(1-t);a)$. Expression~\eqref{eq:dens_charlier} shows that $\mu_{1}$ is a positive measure, indeed. Density~\eqref{eq:dens_charlier} is plotted in Figure~\ref{fig:density_thm1} for several values of $a$.
\end{rem}

\begin{figure}[htb!]
    \centering
        \includegraphics[width=0.95\textwidth]{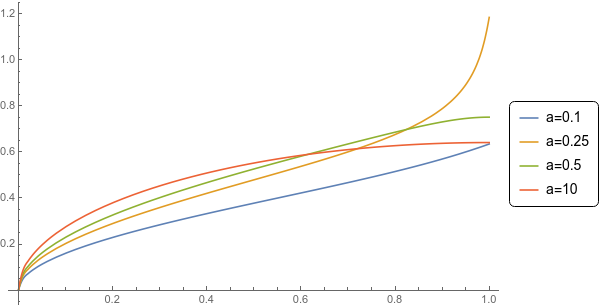}
        \caption{Density~\eqref{eq:dens_charlier} for $a\in\{0.1,0.25,0.5,10\}$.}
    \label{fig:density_thm1}
\end{figure}

\subsection{The method}

We summarize steps of the strategy used to deduce the asymptotic root distributions for polynomials $P_{n}^{\mathrm{C}}$, for $n\to\infty$. Recall that, in general, to deduce the asymptotic root distribution of a sequence of polynomials $P_{n}$ of degree $n$ means to show that the sequence of the root counting measures
\[
 \mu_{n}:=\sum_{\lambda\in P_{n}^{-1}(\{0\})}\frac{\nu(\lambda)}{n}\,\delta_{\lambda}
\]
is weakly convergent, for $n\to\infty$, and provide a description of the limiting measure such as the support, absolute continuity, density, etc. In the above formula, $\nu(\lambda)$ is the multiplicity of the root $\lambda$ of $P_{n}$. For $P_{n}=P_{n}^{\mathrm{C}}$, we apply the following strategy which combines several conventional methods.

First, we prove that the roots of $P_{n}^{\mathrm{C}}$ are localized in a fixed rectangular domain in $\C$ for all $n\in\N$. This allows us to restrict the analysis to this domain which is, although not immediately obvious, a significant simplification in the applied approach. 

Second goal is a determination of the limiting Cauchy transform of the sequence of root counting measures. 
Recall the Cauchy (or Stieltjes) transform $C_{\mu}$ of a Borel measure $\mu$ supported in $\C$ is defined as the convolution $C_{\mu}:=1/z\ast\mu$ understood in the sense of generalized functions on $\C$. For $z\in\C\setminus\supp\mu$, the Cauchy transform of $\mu$ is given by the integral
\[
 C_{\mu}(z):=\int_{\C}\frac{\dd\mu(\xi)}{z-\xi}.
\]
If measures $\mu_{n}$ are supported in a compact subset of $\C$ for all $n$ and the sequence of their Cauchy transforms $C_{\mu_{n}}$ tends to a function $C$ almost everywhere in $\C$ (with respect to the 2D Lebesgue measure), then $\mu_{n}$ converges weakly to a~measure $\mu$ and $C=C_{\mu}$ is its Cauchy transform. If $\mu_{n}$ is the root counting measure of a polynomial $P_{n}$, we have the useful formula
\begin{equation}
 C_{\mu_{n}}(z)=\frac{P_{n}'(z)}{nP_{n}(z)}.
\label{eq:cauchy_transf_root_count}
\end{equation}
Then the derivation of the limiting Cauchy transform is typically a matter of an application of a conveniently chosen asymptotic method for $P_{n}$ and its derivative.

To recover $\mu$ from its Cauchy transform $C_{\mu}$, one uses the generalized formula $\pi\mu=\partial_{\bar{z}}C_{\mu}$. In all the cases investigated below, $C_{\mu}$ will be an analytic function everywhere except several branch cuts occurring on a finite number of smooth closed arcs. Then, if $\gamma:(\alpha,\beta)\to\C$ denotes a simple oriented open smooth curve on which $C_{\mu}$ has the branch cut, $\mu$ is absolutely continuous there and its density is given by the Plemelj--Sokhotski formula
\begin{equation}
\frac{\dd\mu}{\dd t}(t)=-\frac{\gamma'(t)}{2\pi\ii}\left(C_{\mu}(\gamma(t)+)-C_{\mu}(\gamma(t)-)\right), \quad t\in(\alpha,\beta),
\label{eq:plemelj-sokhotski}
\end{equation}
provided that the jump of $C_{\mu}$ on the image of $\gamma$ is an integrable function. Here $C_{\mu}(\gamma(t)\pm)$ denote the non-tangential limits to $\gamma(t)$ from the left/right side of $\gamma$ induced by the chosen orientation. A~more detailed description of this approach is given in~\cite[Sec.~2.1]{bla-sta_jmaa20} and general aspects on the potential theory and generalized functions can be found in books~\cite{saf-tot_97,vla_84}.

The indicated method relying on the Cauchy transform is quite general and has been successfully applied many times in various problems. Usually, the most limiting aspect is the unspecified part concerning the asymptotic analysis of~\eqref{eq:cauchy_transf_root_count}, for $n\to\infty$, that may differ from case to case depending on what is known about the polynomials $P_{n}$ (generating functions, difference equations, non-Hermitian orthogonality relations, etc.). In this work, the classical saddle point method turns out to be applicable. For a reference, we recall a~formulation of the saddle point method which is taken from~\cite[Thm.~7.1, p.~127]{olv_97} specialized slightly to our needs; see also Perron's method in~\cite[Sec.~II.5]{won_01}.

\begin{thm}[saddle point method]\label{thm:saddle-point}
 Let the following assumptions hold:
 \begin{enumerate}[{\upshape i)}]
  \item Let $f$ and $g$ be functions independent of $n$, single valued, and analytic in a domain $M\subset\C$.
  \item The integration path~$\gamma$ is independent of $n$ and its range is located in~$M$ with a possible exception of the end-points.
  \item There is a point $\xi_{0}$ located on the path~$\gamma$ which is not an end-point such that $f'(\xi_{0})=0$ and $f''(\xi_{0})\neq0$ (i.e., $\xi_{0}$ is the saddle point of $\Re f$).
  \item The integral
  \[
   \int_{\gamma}g(\xi)e^{-n f(\xi)}\dd\xi
  \]
  converges absolutely for all $n$ sufficiently large.
  \item One has
  \[
   \Re\left(f(\xi)-f(\xi_{0})\right)>0,
  \]
  for all $\xi\neq\xi_{0}$ that lie on the range of~$\gamma$.
 \end{enumerate}
 Then 
 \begin{equation}
  \int_{\gamma}g(\xi)e^{-n f(\xi)}\dd\xi=g(\xi_{0})e^{-nf(\xi_{0})}\sqrt{\frac{2\pi}{nf''(\xi_{0})}}\left(1+O\left(\frac{1}{\sqrt{n}}\right)\!\right)\!, \quad \mbox{ as } n\to\infty.
 \label{eq:saddle-point}
 \end{equation}
\end{thm}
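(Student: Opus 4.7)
My plan is to establish~\eqref{eq:saddle-point} along classical lines: isolate a small neighborhood of $\xi_{0}$ on the contour, show that the complementary tail contributes exponentially less than the announced main term, and reduce the local piece to a Gaussian integral via a holomorphic change of variables. For the tail, assumption~(v) combined with continuity of $\Re f$ on the relatively closed subset $\gamma\setminus B(\xi_{0},\delta)$ yields some $\eta_{\delta}>0$ with $\Re(f(\xi)-f(\xi_{0}))\ge\eta_{\delta}$ there. Splitting $e^{-nf}=e^{-(n-n_{0})f}e^{-n_{0}f}$ and invoking the absolute convergence hypothesis~(iv) at an $n_{0}$ large enough, I would estimate
\[
\bigg|\int_{\gamma\setminus B(\xi_{0},\delta)}g(\xi)\,e^{-nf(\xi)}\,\dd\xi\bigg|\le C\,e^{-n\Re f(\xi_{0})}\,e^{-(n-n_{0})\eta_{\delta}},
\]
which is exponentially smaller than the target scale $n^{-1/2}e^{-nf(\xi_{0})}$.

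On $B(\xi_{0},\delta)$, analyticity of $f$ together with $f'(\xi_{0})=0$ and $f''(\xi_{0})\neq 0$ permits a local Morse normal form: there exists a biholomorphism $\xi\mapsto w(\xi)$ with $w(\xi_{0})=0$ and $w'(\xi_{0})=1$ such that
\[
f(\xi)-f(\xi_{0})=\tfrac{1}{2}\,f''(\xi_{0})\,w^{2}.
\]
The image $\tilde{\gamma}$ of the local piece of $\gamma$ under this change is a smooth arc through $0$; its tangent direction $\theta_{0}$ is pinned down (up to $\pi$) by the requirement $\Re(f''(\xi_{0})w^{2})\ge 0$, which is the infinitesimal version of assumption~(v). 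Writing $g(\xi)\,\dd\xi=h(w)\,\dd w$ with $h$ holomorphic near $0$ and $h(0)=g(\xi_{0})$, the local contribution becomes $e^{-nf(\xi_{0})}\int_{\tilde{\gamma}}h(w)\,e^{-(n/2)f''(\xi_{0})w^{2}}\,\dd w$.

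Finally, I would deform $\tilde{\gamma}$ within the disk of holomorphy to the straight ray through $0$ of argument $\theta_{0}$ (no residues being encountered) and rescale $w=s/\sqrt{n}$. The Taylor splitting $h(w)=g(\xi_{0})+w\,h_{1}(w)$ with $h_{1}$ bounded near $0$ reduces the leading part to a pure Gaussian whose evaluation produces exactly the right-hand side of~\eqref{eq:saddle-point}, the branch of the square root being determined by $\theta_{0}$; the linear-in-$w$ correction contributes an $O(n^{-1}e^{-nf(\xi_{0})})$ remainder by standard Gaussian moment estimates, and truncation from the full ray back to the Morse chart is once again exponentially small for the same steepest-descent reason as in the first step. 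The main obstacle is the geometric bookkeeping: one must verify that the Morse coordinate patch together with the subsequent straight-line deformation fits inside the analyticity domain $M$, that no singularities of $g$ or $f$ are crossed during the deformation, and that the deformed local piece still joins smoothly to the untouched outer tail so that the splitting of the original integral is indeed exact. Once these checks are in place, the three estimates combine to yield the asymptotic formula with the stated relative error $O(n^{-1/2})$.
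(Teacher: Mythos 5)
This theorem is not proved in the paper at all: it is quoted, with a citation, from \cite[Thm.~7.1, p.~127]{olv_97} (``specialized slightly to our needs''), so there is no in-paper argument to compare against. What you have written is, in substance, the classical textbook proof that the citation stands in for: split off a $\delta$-neighborhood of $\xi_{0}$, kill the tail by the strict positivity of $\Re(f-f(\xi_{0}))$ combined with the absolute-convergence hypothesis at a fixed $n_{0}$, put $f$ into the quadratic Morse normal form $f(\xi)-f(\xi_{0})=\tfrac12 f''(\xi_{0})w^{2}$ near the saddle, and evaluate a Gaussian. Olver's own proof runs the local step slightly differently --- he substitutes $v=f(\xi)-f(\xi_{0})$ itself as the new variable on each half of the path and invokes his Laplace/Watson machinery rather than a quadratic chart --- but the two reductions are equivalent, and your version is the more commonly taught one. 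Your closing ``geometric bookkeeping'' caveat (fitting the chart and the straightening deformation inside $M$, controlling the small connecting arcs) is exactly the part Olver carries out in detail; it is routine but not free.

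One step deserves a concrete warning. You claim that continuity of $\Re f$ on the \emph{relatively closed} set $\gamma\setminus B(\xi_{0},\delta)$ produces a uniform bound $\Re(f(\xi)-f(\xi_{0}))\geq\eta_{\delta}>0$. Relative closedness is not compactness: if the path is unbounded, or its end-points lie on $\partial M$, a positive continuous function need not be bounded away from zero there, and then the tail can contribute at the same order $n^{-1/2}e^{-n\Re f(\xi_{0})}$ as the saddle (e.g.\ if $\Re(f-f(\xi_{0}))$ decays linearly to $0$ at an end-point while $g$ has an integrable singularity there). Olver's original hypothesis explicitly adds that $\Re(f-f(\xi_{0}))$ is bounded away from zero as $\xi$ tends to the end-points of the path, and that clause --- dropped in the paper's abbreviated restatement --- is what your estimate actually uses. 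In the paper's applications the issue is vacuous (the contours are closed Jordan loops, or run off to $\Re\xi\to-\infty$ where $\Re f\to+\infty$), but your proof should either assume the end-point clause or verify compactness. A second, harmless imprecision: the condition $\Re(f''(\xi_{0})w^{2})\geq0$ confines the tangent direction of $\tilde{\gamma}$ at $0$ to a closed sector pair of opening $\pi/2$, not to a single line ``up to $\pi$''; since deforming to any admissible ray yields the same value of the Gaussian (with the branch of the square root fixed by continuity, as the paper's remark after the theorem notes), this does not affect the conclusion.
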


\begin{rem}
The branch of the square root in~\eqref{eq:saddle-point} has to be appropriately chosen. The choice depends only on the behavior of the phase of the function~$f''$ and the curve $\gamma$ in a~neighborhood of $\xi_{0}$, see~\cite[Chp.~4, Sec.~7]{olv_97} for a detailed explanation.
\end{rem}

The saddle point method will be applied to an integral representation for $P_{n}^{\mathrm{C}}$ deduced from suitable generating function formulas and the Cauchy integral formula. The verification of assumption (v) of Theorem~\ref{thm:saddle-point} represents the most challenging task of the asymptotic analysis and is treated in separate Section~\ref{subsec:spm_charlier}.

We may summarize the main steps of the applied strategy:
\begin{enumerate}[{\upshape 1)}]
\item We show that roots of $P_{n}^{\mathrm{C}}$ are confined to a fixed rectangular domain for all $n\in\N$.
\item We deduce a contour integral representation for $P_{n}^{\mathrm{C}}$ from a generating function formula and apply the saddle point method to obtain the limiting Cauchy transform in the interior of the rectangular domain.
\item We show that the limiting Cauchy transform has a branch cut in the the open rectangular domain and possibly on one of its sides. We apply the Plemelj--Sokhotski formula to recover the limiting measure $\mu$.
\item We check that the obtained limiting measure $\mu$ was determined completely by showing that $\mu$ is a probability measure, i.e., by direct verification of the identity $\int\dd\mu=1$.
\end{enumerate}

\section{The Charlier polynomials}\label{sec:charlier}

\subsection{An approximate localization of roots}\label{subsec:rectang_charlier}

As an initial step, we roughly localize the roots of polynomials $P_{n}^{\mathrm{C}}(\,\cdot\,;a)$ for all $n\in\N$.

It readily follows from the three-term recurrence for Charlier polynomials~\cite[Eq.~9.14.3]{koe-les-swa_10}
\[
 -xC_{n}^{(a)}(x)=aC_{n+1}^{(a)}(x)-(n+a)C_{n}^{(a)}(x)+nC_{n-1}^{(a)}(x), \quad n\in\N,
\]
that the polynomials
\[
 p_{k}^{(n)}(z;a):=a^{k}C_{k}^{(-an)}\left(zn-an-1\right)
\]
fulfill the recurrence
\[
 p_{k+1}^{(n)}(z;a)=\left(z-\frac{k+1}{n}\right)p_{k}^{(n)}(z;a)+\frac{ak}{n}\,p_{k-1}^{(n)}(z;a),
\]
for all $k,n\in\N$. It implies that $p_{n}^{(n)}(z;a)$, which coincides with $a^{n}P_{n}^{\mathrm{C}}(z;a)$, is the characteristic polynomial of the $n\times n$ tridiagonal matrix $J_{n}(a)$, whose entries are given by formulas
\begin{equation}
 \left(J_{n}(a)\right)_{k,k}=\frac{k}{n} \quad\mbox{ and }\quad
 \left(J_{n}(a)\right)_{k+1,k}=\left(J_{n}(a)\right)_{k,k+1}=\ii\sqrt{\frac{ak}{n}},
\label{eq:jacobi_matrix_entries_charlier}
\end{equation}
for $k=1,2\dots,n$ and $k=1,2,\dots,n-1$, respectively, i.e.,
\begin{equation}
 a^{n}P_{n}^{\mathrm{C}}(z;a)=\det\left(z-J_{n}(a)\right),
\label{eq:char_pol_charlier}
\end{equation}
for all $n\in\N$, $a>0$, and $z\in\C$. This connection with Jacobi matrices allows us to localize the roots of $P_{n}^{\mathrm{C}}(\,\cdot\,;a)$ as follows.

\begin{lem}\label{lem:localization_charlier}
 For $a>0$ and all $n\in\N$, roots of $P_{n}^{\mathrm{C}}(\,\cdot\,;a)$ are localized in the rectangular domain $(0,1]+2\ii\sqrt{a}(-1,1)$.
\end{lem}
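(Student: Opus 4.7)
The plan is to exploit the Jacobi matrix representation \eqref{eq:char_pol_charlier}, for which roots of $P_{n}^{\mathrm{C}}(\,\cdot\,;a)$ are exactly the eigenvalues of $J_{n}(a)$. Although $J_{n}(a)$ is complex symmetric and non-Hermitian, its special structure from~\eqref{eq:jacobi_matrix_entries_charlier} allows a clean Cartesian splitting: write
\[
 J_{n}(a)=A_{n}+\ii B_{n},
\]
where $A_{n}=\diag(1/n,2/n,\dots,1)$ is a real diagonal matrix and $B_{n}$ is the real symmetric tridiagonal matrix with zero diagonal and off-diagonal entries $(B_{n})_{k,k+1}=(B_{n})_{k+1,k}=\sqrt{ak/n}$. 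Both $A_{n}$ and $B_{n}$ are Hermitian.

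For any eigenvalue $\lambda\in\C$ of $J_{n}(a)$ with a normalized eigenvector $v\in\C^{n}$, I would compute
\[
 \lambda=\langle v,J_{n}(a)v\rangle=\langle v,A_{n}v\rangle+\ii\,\langle v,B_{n}v\rangle,
\]
so that $\Re\lambda=\langle v,A_{n}v\rangle$ and $\Im\lambda=\langle v,B_{n}v\rangle$ since both quadratic forms are real. The real part then lies in the numerical range of $A_{n}$, which equals the convex hull of its spectrum $\{1/n,2/n,\dots,1\}$, giving $\Re\lambda\in[1/n,1]\subset(0,1]$.

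For the imaginary part, I would bound the operator norm of $B_{n}$. The direct estimate
\[
 |\langle v,B_{n}v\rangle|\leq 2\sum_{k=1}^{n-1}\sqrt{\frac{ak}{n}}\,|v_{k}||v_{k+1}|\leq 2\sqrt{a\tfrac{n-1}{n}}\sum_{k=1}^{n-1}|v_{k}||v_{k+1}|\leq 2\sqrt{a\tfrac{n-1}{n}}<2\sqrt{a},
\]
where the last nontrivial inequality follows from $2|v_{k}||v_{k+1}|\leq|v_{k}|^{2}+|v_{k+1}|^{2}$ and $\sum_{k}|v_{k}|^{2}=1$, gives $\Im\lambda\in(-2\sqrt{a},2\sqrt{a})$. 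Combining the two bounds yields the claimed inclusion of the spectrum (hence of the root set) in $(0,1]+2\ii\sqrt{a}(-1,1)$.

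The argument is short and essentially routine once the Cartesian decomposition $J_{n}(a)=A_{n}+\ii B_{n}$ is noted; the only mild point is to ensure the strict inequality in the imaginary direction, which is secured by the factor $\sqrt{(n-1)/n}<1$ appearing in the Gershgorin/Cauchy--Schwarz bound on $\|B_{n}\|$. No serious obstacle is anticipated.
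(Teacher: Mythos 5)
Your proposal is correct and follows essentially the same route as the paper: both pass to the Jacobi matrix $J_{n}(a)$ via \eqref{eq:char_pol_charlier}, split it into Hermitian real and imaginary parts ($A_{n}=\Re J_{n}(a)$, $B_{n}=\Im J_{n}(a)$), and bound the numerical range, obtaining the same strict bound $2\sqrt{a(n-1)/n}<2\sqrt{a}$ for the imaginary part. The only cosmetic difference is that you estimate the quadratic form of $B_{n}$ directly by Cauchy--Schwarz/AM--GM, whereas the paper bounds $\|\Im J_{n}(a)\|$ through the factorization $\Im J_{n}(a)=\sqrt{a/n}\,(W+W^{*})$.
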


\begin{proof}
 It follows from~\eqref{eq:char_pol_charlier} that roots of $P_{n}^{\mathrm{C}}(\,\cdot\,;a)$ coincide with eigenvalues of $J_{n}(a)$. In the proof, we show that the numerical range of $J_{n}(a)$ is localized in $(0,1]+2\ii\sqrt{a}(-1,1)$, i.e.,
\[
  0<\Re(x,J_{n}(a)x)\leq1 \quad \mbox{ and }\quad  -2\sqrt{a}<\Im(x,J_{n}(a)x)<2\sqrt{a},
\]
for all $x\in\C^{n}$ with $\|x\|=1$.
  As the spectrum of a matrix is a subset of its numerical range, the statement will readily follow.
 
Notice that, for any $x\in\C^{n}$, one has 
 \[
 \Re(x,J_{n}(a)x)=(x,(\Re J_{n}(a))x) \quad \mbox{ and }\quad \Im(x,J_{n}(a)x)=(x,(\Im J_{n}(a))x),
 \]
where
\[ 
 \Re J_{n}(a):= \frac{J_{n}(a)+J_{n}^{*}(a)}{2} \quad \mbox{ and }\quad \Im J_{n}(a):= \frac{J_{n}(a)-J_{n}^{*}(a)}{2\ii}.
\]
 Fix $n\in\N$ and $x\in\C^{n}$ such that $\|x\|=1$. Since $\Re J_{n}(a)=\diag(\frac{1}{n},\frac{2}{n},\dots,1)$, it clearly holds $0<(x,(\Re J_{n}(a))x)\leq 1$.
 Next, one has
 \[
  \Im J_{n}(a) = \sqrt{\frac{a}{n}}\left(W+W^{*}\right)\!,
 \]
 where $W=\diag(1,\sqrt{2},\dots,\sqrt{n})\,U$, $Ue_{1}=0$, and $Ue_{k}=e_{k-1}$, for $k\in\{2,\dots,n\}$, where $\{e_{1},\dots,e_{n}\}$ stands for the standard basis of $\C^{n}$.
 Then it is easy to see that
 \[
 \|\Im J_{n}(a)\|\leq2\sqrt{\frac{a}{n}}\,\|W\|\leq 2\sqrt{\frac{a(n-1)}{n}}<2\sqrt{a}.
 \]
 The last inequality implies that $|(x,(\Im J_{n}(a))x)|<2\sqrt{a}$.
\end{proof}

\subsection{The limiting Cauchy transform}

An application of the Cauchy integral formula to the generating function for the Charlier polynomials~\cite[Eq.~9.4.11]{koe-les-swa_10}
\[
e^{t}\left(1-\frac{t}{a}\right)^{x}=\sum_{n=0}^{\infty}C_{n}^{(a)}(x)\frac{t^{n}}{n!}, \quad |t|<a,
\]
yields the contour integral representation 
\[
 C_{n}^{(a)}(x)=\frac{1}{2\pi\ii}\oint_{\gamma}e^{z}\left(1-\frac{z}{a}\right)^{x}\frac{\dd z}{z^{n+1}},
\]
where $\gamma$ is a positively oriented Jordan curve with $0$ in its interior located in the domain of analyticity of the
integrated function. By using~\eqref{eq:def_pol_C}, we get
\[
 P_{n}^{\mathrm{C}}(z;a)=\frac{1}{2\pi}\oint_{\gamma}e^{v}\left(1+\frac{v}{an}\right)^{zn-an-1}\frac{\dd v}{v^{n+1}},
\]
where $\gamma$ is a positively oriented Jordan curve with $0$ in its interior located in the cut-plane $\C\setminus(-\infty,-an]$. Making the substitution $v=an\xi$ in the last integral results in the formula
\[
 P_{n}^{\mathrm{C}}(z;a)=\frac{1}{2\pi\ii a^{n}n^{n}}\oint_{\gamma}e^{an\xi}\left(1+\xi\right)^{zn-an-1}\frac{\dd \xi}{\xi^{n+1}},
\]
where $\gamma$ is a positively oriented Jordan curve with $0$ in its interior located in $\C\setminus(-\infty,-1]$.
Thus, we have arrived at the integral representation
\begin{equation}
 P_{n}^{\mathrm{C}}(z;a)=\frac{1}{2\pi\ii a^{n} n^{n}}\oint_{\gamma}g(\xi)e^{-nf(\xi,z)}\dd\xi,
\label{eq:integr_repre_spm_charlier}
\end{equation}
where
\[
 g(\xi):=\frac{1}{\xi(1+\xi)} \quad \mbox{ and } \quad f(\xi;z)=(a-z)\log(1+\xi)+\log\xi-a\xi,
\]
and $\gamma$ as above. The integral formula~\eqref{eq:integr_repre_spm_charlier} is in a suitable form for the application of the saddle point method.

We make use of formula~\eqref{eq:integr_repre_spm_charlier} and its derivative with respect to $z$, where the function $g$ is simply replaced by $g\cdot(\partial_{\xi}f(\,\cdot\,;z))$, apply Theorem~\ref{thm:saddle-point} and formula~\eqref{eq:cauchy_transf_root_count} to deduce the limiting Cauchy transform. Due to Lemma~\ref{lem:localization_charlier} and the symmetry
\begin{equation}
\overline{P_{n}^{\mathrm{C}}(z;a)}=P_{n}^{\mathrm{C}}(\bar{z};a),
\label{eq:symmetry_charlier}
\end{equation}
it will be sufficient to consider only $z\in(0,1)+2\ii\sqrt{a}(0,1)$. It is easy to see that the assumptions (i) and (ii) of Theorem~\ref{thm:saddle-point} are fulfilled. Concerning the assumption (iii), we have
\[
 \frac{\partial f}{\partial\xi}(\xi;z)=\frac{a-z}{1+\xi}+\frac{1}{\xi}-a.
\]
Hence, critical points of $f(\,\cdot\,;z)$ are roots of the quadratic equation
\[
a\xi^{2}+(z-1)\xi-1=0,
\]
which are $\xi_{\pm}(z;a)$ given by~\eqref{eq:xi_plus_minus_charlier}. The roots $\xi_{\pm}(z;a)$ coincide if and only if $z=1\pm 2\ii\sqrt{a}$.

A verification of the assumption (iv) of Theorem~\ref{thm:saddle-point} is again straightforward.
On the other hand, the last assumption~(v) is crucial and not easy to establish. One needs to check that the curve $\gamma$ from the integral~\eqref{eq:integr_repre_spm_charlier} is homotopic to a Jordan curve located in $\C\setminus((-\infty,-1]\cup\{0\})$ with the property that for all $\xi$ from the image of this curve
\[
\Re\left(f(\xi;z)-f(\xi_{\pm}(z;a);z)\right)>0
\]
with the only exception when either $\xi=\xi_{+}(z;a)$ or $\xi=\xi_{-}(z;a)$. It is by no means clear, whether it is possible. A detailed justification, which is based on a discussion of all possible configurations, is laborious and the most technical part of the applied method. In order not to distract from our primary intention, which is the derivation of the asymptotic root distribution, we postpone the detailed justification to separate Section~\ref{subsec:spm_charlier}. We point out that, when Theorem~\ref{thm:saddle-point} is applied without a rigorous justification of the assumptiton (v), one obtains the right formula for the limiting Cauchy transform in $(0,1)+2\ii\sqrt{a}(0,1)$.

It is natural to introduce two subsets of the rectangular domain $(0,1)+2\ii\sqrt{a}(0,1)$ that identify the regions where either the saddle point $\xi_{+}(z;a)$ or $\xi_{-}(z;a)$ is dominant:
\begin{equation}
 \Omega_{\pm}:=\left\{z\in(0,1)+2\ii\sqrt{a}\,(0,1) \mid \Re f(\xi_{+}(z;a),z) \gtrless \Re f(\xi_{-}(z;a),z)\right\}.
\label{eq:def_omega_pm}
\end{equation}
Further, the common boundary of $\Omega_{+}$ and $\Omega_{-}$ is denoted as
\begin{equation}
 \Omega_{0}:=\left\{z\in(0,1)+2\ii\sqrt{a}\,(0,1) \mid \Re f(\xi_{+}(z;a),z)=\Re f(\xi_{-}(z;a),z)\right\},
\label{eq:def_omega_0}
\end{equation}
see Figure~\ref{fig:omega_sets_charlier}.

\begin{figure}[htb!]
  \includegraphics[width=0.84 \textwidth]{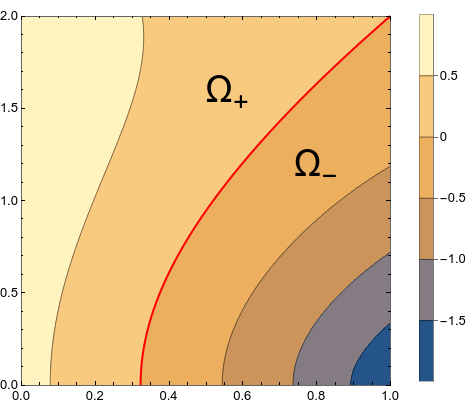}
        \caption{Values of $\Re f(\xi_{+}(z;a),z)- \Re f(\xi_{-}(z;a),z)$ for $z\in(0,1)+2\ii\sqrt{a}\,(0,1)$ and $a=1$. The common boundary~\eqref{eq:def_omega_0} of $\Omega_{\pm}$ is made in red.}
        \label{fig:omega_sets_charlier}
\end{figure}

We will show that $\Omega_{0}$ is an arc of a simple curve which connects a real point in $(0,1)$ with the corner point $1+2\ii\sqrt{a}$. To this end, we need the following auxiliary result.

\begin{lem}\label{lem:basic_prop_aux_charlier}
 For the roots $\xi_{\pm}=\xi_{\pm}(z;a)$ given by~\eqref{eq:xi_plus_minus_charlier}, the following implications hold true.
 \begin{enumerate}[{\upshape i)}]
  \item 
  \[
   \log\left|\frac{1+\xi_{+}}{1+\xi_{-}}\right|=0 \quad\Rightarrow\quad  |z-a|=1+a, \; \Re z\geq 1.
  \]
  \item
  \[
   \arg\left(\frac{1+\xi_{+}}{1+\xi_{-}}\right)\in\{-\pi,0\} \quad\Rightarrow\quad \Im z=0 \;\;\mbox{ or }\;\; |z-a|=1+a, \; \Re z\leq 1.
  \]
 \end{enumerate}
\end{lem}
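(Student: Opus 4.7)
The plan is to give a unified algebraic treatment of both parts and then separate the two alternatives by sign bookkeeping. Writing $u := 1+\xi_{+}$ and $v := 1+\xi_{-}$, hypothesis (i) is the statement $|u|=|v|$ and hypothesis (ii) is $u/v \in \R$ (interpreting the degenerate case $v=0$ as a limit; this occurs only at $z=a$, which is real and hence trivially satisfies the conclusion of (ii)). The elementary identity
\[
(u-v)\,\overline{(u+v)} = \bigl(|u|^{2} - |v|^{2}\bigr) + 2\ii\,\Im(u\bar{v})
\]
turns (i) into the vanishing of the real part and (ii) into the vanishing of the imaginary part of the left-hand side. In either case the square $(u-v)^{2}\,\overline{(u+v)}^{\,2}$ is a real number, non-positive in case (i) and non-negative in case (ii).

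Next I would compute $u \pm v$ explicitly. Since $\xi_{\pm}$ are the roots of $a\xi^{2}+(z-1)\xi-1=0$, the shift $\eta=1+\xi$ shows that $u,v$ are the roots of $a\eta^{2}+(z-1-2a)\eta+(a-z)=0$, so by Vieta
\[
a(u+v) = 2a + 1 - z \;=:\; \eta, \qquad a^{2}(u-v)^{2} = (1-z)^{2} + 4a \;=:\; \zeta.
\]
The common requirement of (i) and (ii) therefore reduces to $\zeta\,\bar{\eta}^{\,2} \in \R$, with the sign distinguishing the two cases.

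The key computational step is to locate the zero set of $\Im(\zeta\,\bar{\eta}^{\,2})$. Setting $w := 1-z$ and expanding, I expect the imaginary part to factor as
\[
\Im\bigl(\zeta\,\bar{\eta}^{\,2}\bigr) = 4a\,\Im(w)\,\bigl[\,|w|^{2} + 2(a-1)\Re(w) - 4a\,\bigr].
\]
Rewriting the bracket in the variable $z$ and completing the square yields $|z-a|^{2} - (1+a)^{2}$, so that the two factors vanish precisely on $\Im z = 0$ and on the circle $|z-a|=1+a$, respectively. This factorization, though elementary, is the main technical obstacle of the proof.

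To finish, I would separate the two sub-loci by sign. If $\Im z = 0$, then $\zeta > 0$ and $\bar\eta^{\,2}=\eta^{2}\ge 0$, so $\zeta\,\bar\eta^{\,2}\ge 0$, placing $z$ in case (ii). On the circle, I parametrize $z = a + (1+a)e^{\ii\phi}$; using $1-e^{\ii\phi}=-2\ii\sin(\phi/2)e^{\ii\phi/2}$ the ratio simplifies to
\[
\frac{\zeta}{\eta^{\,2}} = \frac{\alpha - \cos\phi}{1 - \cos\phi}, \qquad \alpha := \frac{1-a}{1+a}.
\]
Since the denominator is nonnegative, the sign of $\zeta/\eta^{\,2}$ is opposite to that of $\cos\phi-\alpha$. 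Because $\Re z = a + (1+a)\cos\phi$, the inequality $\Re z\ge 1$ is equivalent to $\cos\phi\ge\alpha$, which is precisely the condition $\zeta/\eta^{\,2}\le 0$ required in (i). The complementary arc $\Re z\le 1$ gives $\zeta/\eta^{\,2}\ge 0$ and feeds into (ii), together with the real axis, completing the two implications.
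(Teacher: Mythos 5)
Your proposal is correct, and it takes a genuinely different route from the paper. The paper handles (i) and (ii) separately: it writes each hypothesis as an explicit parametric condition, $\frac{1+\xi_{+}}{1+\xi_{-}}=\frac{1+\ii s}{1-\ii s}$ with $s\in\R$ for (i) and $\frac{1+\xi_{+}}{1+\xi_{-}}=t$ with $t\in\R\setminus\{0\}$ for (ii), reduces each to a quadratic equation in $z$ with parameter $s$ resp.\ $t$, solves it in closed form, and then verifies $|z-a|=1+a$ and the relevant half-plane inequality by direct computation on the explicit roots. You instead unify the two parts: via Vieta on the shifted quadratic $a\eta^{2}+(z-1-2a)\eta+(a-z)=0$ both hypotheses become the reality of $\zeta\bar\eta^{\,2}$ with opposite signs, the single factorization $\Im(\zeta\bar\eta^{\,2})=-4a\,\Im z\,\bigl(|z-a|^{2}-(1+a)^{2}\bigr)$ (which I have checked) produces the line and the circle in one stroke, and the sign computation $\zeta/\eta^{2}=(\alpha-\cos\phi)/(1-\cos\phi)$ cleanly separates the two arcs. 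Your version explains \emph{why} exactly these two loci appear and avoids introducing auxiliary parameters, at the price of one algebraic identity playing the role of the paper's two verifications $|z-a|^{2}=(1+a)^{2}$. One small point to patch: when $\Im z=0$ you conclude $\zeta\bar\eta^{\,2}\ge 0$ and consign such $z$ to case (ii), but equality occurs at $\eta=0$, i.e.\ $z=1+2a$, and this real point does satisfy the hypothesis of (i) (there $u=-v$); since it lies on the circle with $\Re z=1+2a>1$, the conclusion of (i) holds there as well, so the statement is unaffected, but the case should be mentioned.
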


\begin{proof}
i) One has 
 \[
 \log\left|\frac{1+\xi_{+}}{1+\xi_{-}}\right|=0 \quad\Leftrightarrow\quad
 \left|\frac{1+\xi_{+}}{1+\xi_{-}}\right|=1 \quad\Leftrightarrow\quad
 \frac{1+\xi_{+}}{1+\xi_{-}}=\frac{1+\ii s}{1-\ii s},
 \]
 for a parameter $s\in\R$. Using~\eqref{eq:xi_plus_minus_charlier}, the latter condition yields the quadratic equation
 \[
 s^{2}(2a+1-z)^{2}+(1-z)^{2}+4a=0,
 \]
 whose solutions read
 \[
  z=\frac{1+s^{2}(1+2a)\pm2\ii\sqrt{a+s^{2}a(1+a)}}{1+s^{2}}.
 \]
 For these roots, one obtains 
 \[
  |z-a|^{2}=\frac{\left(1+s^{2}(1+2a)-a(1+s^{2})\right)^{2}+4(a+s^{2}a(1+a))}{(1+s^{2})^{2}}=(1+a)^{2}
 \]
 and
 \[
  \Re z =1+\frac{2as^{2}}{1+s^{2}}\geq 1.
 \]

 ii) One has 
 \[
 \arg\left(\frac{1+\xi_{+}}{1+\xi_{-}}\right)\in\{-\pi,0\} \quad\Leftrightarrow\quad \frac{1+\xi_{+}}{1+\xi_{-}}=t,
 \]
 for a parameter $t\in\R\setminus\{0\}$. Using definition~\eqref{eq:xi_plus_minus_charlier}, the latter equation yields
 \[
 (1+t)^{2}\left((1-z)^{2} + 4a\right)= (1-t)^{2}(2a + 1 - z)^{2},
 \]
 which is again a quadratic equation in $z$. Its solutions read
 \[
  z=\frac{2t-a(1-t)^{2}\pm|1+t|\sqrt{a^2(1-t)^2-4ta}}{2t}.
 \]
 If $a(1-t)^{2}\geq 4t$, i.e., the expression in the square root is non-negative, $\Im z=0$. On the other hand, if $a(1-t)^{2}<4t$, the two roots are
 \[ 
 z=\frac{2t-a(1-t)^{2}\pm\ii|1+t|\sqrt{4ta-a^2(1-t)^2}}{2t},
 \] 
 for which one gets 
 \[
  |z-a|^{2}=\frac{\left(2t-a(1-t)^{2}-2at\right)^{2}+(1+t)^{2}\left(4ta-a^2(1-t)^2\right)}{4t^{2}}=(1+a)^{2}
 \]
 and
 \[
  \Re z =1-\frac{a(1-t)^{2}}{2t}\leq 1
 \]
 since the assumption $a(1-t)^{2}<4t$ implies $t>0$.
\end{proof}

\begin{prop}\label{lem:Omega0_basic_prop_charlier}
 The set $\Omega_{0}$ is an open connected arc of a simple smooth curve with one end-point located in the interval $(0,1)$ and the second end-point $1+2\ii\sqrt{a}$.
\end{prop}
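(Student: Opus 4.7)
Plan. Let $R:=(0,1)+2\ii\sqrt{a}\,(0,1)$. On $R$ one has $\Im\bigl((1-z)^{2}+4a\bigr)=-2(1-\Re z)\Im z<0$, so the principal branch of $\sqrt{(1-z)^{2}+4a}$, and hence $\xi_{\pm}(z;a)$, are analytic on $R$. The identities $\xi_{+}\xi_{-}=-1/a$ and $(1+\xi_{+})(1+\xi_{-})=(a-z)/a$ show that neither $\xi_{\pm}$ nor $1+\xi_{\pm}$ vanishes on the simply-connected $R$; single-valued analytic branches of $\log\xi_{\pm}$ and $\log(1+\xi_{\pm})$ therefore exist, and
\[
h(z):=f(\xi_{+}(z;a);z)-f(\xi_{-}(z;a);z)
\]
is analytic on $R$ with $\Omega_{0}=\{z\in R:\Re h(z)=0\}$. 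Using the saddle-point condition $\partial_{\xi}f(\xi_{\pm};z)=0$ together with $\partial_{z}f(\xi;z)=-\log(1+\xi)$, the chain rule gives
\[
h'(z)=-\log\frac{1+\xi_{+}(z;a)}{1+\xi_{-}(z;a)}.
\]

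A vanishing of $\Re h'(z)=-\log\bigl|(1+\xi_{+})/(1+\xi_{-})\bigr|$ requires $|(1+\xi_{+})/(1+\xi_{-})|=1$; by Lemma~\ref{lem:basic_prop_aux_charlier}(i) this forces $|z-a|=1+a$ and $\Re z\geq 1$, which in $\overline{R}$ holds only at the corner $z_{0}:=1+2\ii\sqrt{a}$. In particular $h'\neq 0$ on $R$, so $\nabla\Re h\neq 0$ there, and the implicit function theorem makes $\Omega_{0}$ a smooth embedded $1$-manifold in $R$. Since $\Re h$ is a non-constant harmonic function with no critical points in $R$, the maximum principle forbids any component of $\Omega_{0}$ from being a closed Jordan curve in $R$ (it would bound a region on which $\Re h\equiv 0$, forcing $h$ constant and $h'\equiv 0$); hence every component of $\Omega_{0}$ is a simple open smooth arc whose closure meets $\partial R$ at exactly two distinct points.

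It remains to show $\overline{\Omega_{0}}\cap\partial R=\{z_{0},x_{0}\}$ for a unique $x_{0}\in(0,1)$. The corner $z_{0}$ lies in this closure because $\xi_{+}(z_{0};a)=\xi_{-}(z_{0};a)$ forces $h\to 0$ as $z\to z_{0}$ within $R$. On each of the three open sides $\Re z=0$, $\Re z=1$, $\Im z=2\sqrt{a}$, I would rule out further zeros of $\Re h$ via monotonicity plus a single sign check: the tangential derivative of $\Re h$ along a horizontal side equals $\Re h'$, nonzero on $\overline{R}\setminus\{z_{0}\}$ by the previous paragraph; along a vertical side it equals $-\Im h'$, whose vanishing by Lemma~\ref{lem:basic_prop_aux_charlier}(ii) is confined to the finite intersection of the side with the arc $\{|z-a|=1+a,\;\Re z\leq 1\}$ (producing at most one isolated interior point on the left side when $a>1/2$). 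On the bottom open side $z=x\in(0,1)$, a direct real-variable analysis of $\Re h(x)$---using the explicit real formulas for $\xi_{\pm}(x;a)$ and the sign change of $1+\xi_{-}(x;a)$ at $x=a$---shows strict monotonicity and opposite signs near $x=0$ and $x=1$, producing a unique $x_{0}\in(0,1)$ with $\Re h(x_{0})=0$. Altogether $\Omega_{0}$ is a single simple smooth open arc joining $x_{0}$ to $z_{0}$.

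The main technical obstacle is precisely this four-sided boundary analysis. Lemma~\ref{lem:basic_prop_aux_charlier} supplies the crucial non-reality of $(1+\xi_{+})/(1+\xi_{-})$ off thin exceptional arcs, but converting this into definite sign information for $\Re h$ on each side---particularly on the left side when $a>1/2$ (where $\Im h'$ has an isolated interior zero) and on the bottom side across the branch point $x=a$ of $\log(1+\xi_{-})$---requires careful bookkeeping of the chosen logarithm branches and one explicit evaluation per side.
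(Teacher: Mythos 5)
Your argument shares the paper's key mechanism --- Lemma~\ref{lem:basic_prop_aux_charlier} controlling where $\Re h'$ and $\Im h'$ can vanish --- but assembles the conclusion by a level-set argument where the paper uses direct monotonicity. The paper exploits the same observation that $\partial_{x}\Re h=\Re h'\neq 0$ on $(0,1)+\ii[0,2\sqrt{a}]$, which makes $\Re h$ strictly monotone in $x$ on every horizontal segment of $\overline{R}$; combined with opposite signs at the left and right ends, each horizontal segment meets $\Omega_{0}$ in exactly one point, so $\Omega_{0}$ is the graph of a smooth function $x=x(y)$ and the arc structure is immediate (this graph parametrization is re-used later, in Remark~\ref{rem:dens_more_explicit_charlier}). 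You instead pass through the implicit function theorem plus the maximum principle to exclude closed components, and then count boundary contacts. Both are sound, but your route is not actually shorter: the boundary-contact count still requires exactly the side-by-side sign analysis which is the bulk of the paper's proof, and in addition you must justify why $\Re h$ extends continuously to $\overline{R}$ --- in particular across $x=a$ on the bottom side where $\xi_{-}\to-1$, which works because the factor $a-\Re z$ multiplying $\log|1+\xi_{-}|$ vanishes there --- so that components are compact arcs terminating at isolated boundary zeros rather than merely accumulating on $\partial R$.

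The one place you genuinely underestimate the work is the sign check $\Re h>0$ on the left side when $a>1/2$, i.e.\ when $\Im h'$ has the interior zero on that side. Your plan of ``one explicit evaluation per side'' would mean evaluating $\Re h(2\ii\sqrt{a})$ directly, but the paper explicitly notes that this value resists direct inspection. Instead it determines the sign of $\partial_{x}\Re h$ along the whole top side from a local expansion near the corner $1+2\ii\sqrt{a}$ (where $\xi_{+}=\xi_{-}$), concludes that $x\mapsto\Re h(x+2\ii\sqrt{a})$ is strictly decreasing on $[0,1]$, combines this with $\Re h(1+2\ii\sqrt{a})=0$ to get $\Re h(2\ii\sqrt{a})>0$, and only then excludes a zero on $\ii(\sqrt{2a+1},2\sqrt{a})$ by a Rolle-type argument with Lemma~\ref{lem:basic_prop_aux_charlier}(ii). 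You will need some such detour; by contrast the remaining corner checks, $\Re h(1)<0$ and $\Re h(0)>0$, are elementary computations as you anticipate.
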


\begin{proof}
We prove the statement by showing that the function 
\[
 g(z):=\Re f(\xi_{+}(z;a),z)-\Re f(\xi_{-}(z;a),z)
\]
has the following properties:
\begin{enumerate}[{\upshape a)}]
 \item For all $y\in[0,2\sqrt{a}]$, the function $x\mapsto g(x+\ii y)$ is strictly monotone in $(0,1)$.
 \item For all $y\in[0,2\sqrt{a})$, $g(1+\ii y)<0$, and $g(1+2\ii\sqrt{a})=0$. 
 \item For all $y\in[0,2\sqrt{a}]$, $g(\ii y)>0$.
\end{enumerate}

The property (a) means that the function $g$ is either strictly increasing, or strictly decreasing on each horizontal line segment in the rectangle $[0,1]+2\ii\sqrt{a}[0,1]$. By (b) and (c), this function is positive at the left end-point of the horizontal line segment and negative at the right end-point except the upper-right corner $1+2\ii\sqrt{a}$, where it vanishes. It follows that $g$ is actually strictly decreasing function with a unique zero in each of the horizontal line segments. It means that the intersection of $\Omega_{0}$ and the horizontal line segment is always a~one-point set. Taking also into account the smoothness of $g$ in $(0,1)+2\ii\sqrt{a}(0,1)$, all claims from the statement readily follow. In fact, it shows even more. Namely, that $\Omega_{0}$ is the graph of a smooth function $x=x(y)$, for $y\in[0,2\sqrt{a}]$, or in other words, that the curve $\gamma$, whose image coincides with the closure of $\Omega_{0}$, can be parametrized by the imaginary part of the variable $z=x+\ii y$.

\textbf{The verification of property~(a):} For brevity, we write $\xi_{\pm}=\xi_{\pm}(z;a)$. By claim (i) of Lemma~\ref{lem:basic_prop_aux_charlier}, the partial derivative
\[
 \frac{\partial g}{\partial x}(x+\ii y)=-\log\left|\frac{1+\xi_{+}}{1+\xi_{-}}\right|
\]
never vanishes for all $x\in(0,1)$ and $y\in[0,2\ii\sqrt{a}]$, from which the property~(a) follows. 

Let us also remark that $g=g(x+\ii y)$, while smooth as function of $x$ for all $y\in(0,2\sqrt{a}]$, need not be smooth in the real interval $(0,1)$, i.e., when $y=0$ (it is the case if $a<1$). It follows from the observation that $\xi_{-}=-1$, if and only if $z=a$, which implies $\partial_{x}g(a)=-\infty$. Nevertheless, $g$ is always continuous in $[0,1]$ as its value $g(a)=-\log(a)-a-1$ is well defined by the respective limit.

\textbf{The verification of property~(b):} The partial derivative 
\begin{equation}
 \frac{\partial g}{\partial y}(x+\ii y)=\arg\left(\frac{1+\xi_{+}}{1+\xi_{-}}\right)
\label{eq:g_partial_der_y_inproof}
\end{equation}
is non-vanishing for $x=1$ and all $y\in(0,2\sqrt{a})$ by claim~(ii) of Lemma~\ref{lem:basic_prop_aux_charlier}. Indeed, the intersection of the vertical line $\Re z=1$ and the circle $|z-a|=a+1$ occurs at points $z=\pm2\ii\sqrt{a}$. Consequently, the continuous function $y\mapsto\ g(1+\ii y)$ is strictly monotone in $(0,2\sqrt{a})$. 

Next, since $\xi_{+}=\xi_{-}$, for $z=1+2\ii\sqrt{a}$, it is clear that $g(1+2\ii\sqrt{a})=0$. One also readily checks that
\[
 g(1)=(a-1)\log\left|\frac{1+\sqrt{a}}{1-\sqrt{a}}\right|-2\sqrt{a}<0,
\]
for all $a>0$. Thus, $y\mapsto\ g(1+\ii y)$ is strictly increasing in $[0,2\sqrt{a}]$, negative in $[0,2\sqrt{a})$, and vanishing at $y=2\sqrt{a}$. This is the property~(b).

\textbf{The verification of property~(c):} 
We distinguish two cases. If $a\leq 1/2$, the entire rectangle $(0,1)+2\ii\sqrt{a}(0,1)$ is located inside the disk $|z-a|<a+1$. Then the claim (ii) of Lemma~\ref{lem:basic_prop_aux_charlier} implies that partial derivative~\eqref{eq:g_partial_der_y_inproof} does not change the sign in $(0,1)+2\ii\sqrt{a}(0,1)$. As we already know its sign to be positive in $1+2\ii\sqrt{a}(0,1)$, it is also positive in $2\ii\sqrt{a}(0,1)$, and hence the function $y\mapsto g(\ii y)$ is strictly increasing in $(0,1)$. It is an easy exercise to check that 
\[
 g(0)=(2a+1)\log\frac{\sqrt{1+4a}+1}{\sqrt{1+4a}-1}-\sqrt{1+4a}>0,
\]
for all $a>0$. It follows the property (c) for $a\leq 1/2$.

If $a>1/2$, the circle $|z-a|=a+1$ intersects $2\ii\sqrt{a}(0,1)$ at the point $\ii\sqrt{1+2a}$. Hence, we similarly infer from the claim (ii) of Lemma~\ref{lem:basic_prop_aux_charlier} that $y\mapsto g(\ii y)$ is strictly increasing this time only in $(0,\sqrt{2a+1})$ and that the partial derivative~\eqref{eq:g_partial_der_y_inproof} can vanish for $x=0$ at most at the point $y=\sqrt{2a+1}$ (which is actually the case). In the last part of the proof, we will show that $g(2\ii\sqrt{a})>0$. Now it suffices to realize that, if the function $g$ has a zero in the segment $\ii(\sqrt{2a+1},2\sqrt{a})$, at which end points it has positive values, $\partial_{y}g$ has to vanish at a~point therein. This would contradict the claim~(ii) of Lemma~\ref{lem:basic_prop_aux_charlier}, however. Thus, there is no zero of $g$ in $\ii(\sqrt{2a+1},2\sqrt{a})$ and so $g$ stays positive in the entire segment $2\ii\sqrt{a}(0,1)$.

It remains to show that $g(2\ii\sqrt{a})>0$. A direct inspection of the value does not seem to be as elementary as in the cases of the remaining corners of the rectangle. Therefore we use an indirect argument. A tedious but straightforward calculation yields
\[
 \log\frac{1+\xi_{+}\left(1+2\ii\sqrt{a}-z^{2};a\right)}{1+\xi_{-}\left(1+2\ii\sqrt{a}-z^{2};a\right)}=\frac{\sqrt{2}\left(1+\sqrt{a}+\ii(1-\sqrt{a})\right)}{\sqrt[4]{a}(1+a)}z+O(z^{2}), \quad z\to0.
\]
It follows that 
\[
 \frac{\partial g}{\partial x}\left(1-\varepsilon^{2}+2\ii\sqrt{a}\right)=-\Re\left(\log\frac{1+\xi_{+}\left(1+2\ii\sqrt{a}-\varepsilon^{2};a\right)}{1+\xi_{-}\left(1+2\ii\sqrt{a}-\varepsilon^{2};a\right)}\right)=
 -\frac{\sqrt{2}\left(1+\sqrt{a}\right)}{\sqrt[4]{a}(1+a)}\varepsilon+O(\varepsilon^{2}),
\]
for $\varepsilon\to0+$. Hence $\partial_{x}g\left(1-\varepsilon^{2}+2\ii\sqrt{a}\right)<0$ for all $\varepsilon>0$ sufficiently small. Since we already know that the partial derivative does not change its sign in the entire line segment $(0,1)+2\ii\sqrt{a}$ we conclude that the function $x\mapsto g(x+2\ii\sqrt{a})$ is strictly decreasing in $[0,1]$. Recalling also that $g(1+2\ii\sqrt{a})=0$, we see that $g(2\ii\sqrt{a})>0$.
\end{proof}

Next, we formulate a statement on the limiting Cauchy transform. Its proof is based on the application of formula~\eqref{eq:cauchy_transf_root_count}, integral representation~\eqref{eq:integr_repre_spm_charlier}, and the saddle point method. It readily follows that
\[
 \lim_{n\to\infty}\frac{\partial_{z}P_{n}^{\mathrm{C}}(z;a)}{nP_{n}^{\mathrm{C}}(z;a)}=-\frac{\partial f}{\partial z}(\xi_{\pm}(z;a);a), \quad\mbox{ for } z\in\Omega_{\mp}.
\]
The rigorous justification of the assumption (v) of Theorem~\ref{thm:saddle-point}, which we skip at this point, is postponed to Section~\ref{subsec:spm_charlier}. Recalling~\eqref{eq:def_f_charlier}, we arrive at the following formula.

\begin{thm}\label{thm:cauchy_transf_charlier}
The limiting Cauchy transform of the sequence of root counting measures $\mu_{n}^{\mathrm{C}}$ of polynomials $P_{n}^{\mathrm{C}}(\,\cdot\,;a)$ in $(0,1)+2\ii\sqrt{a}\, (0,1)$ reads
\[
 C_{\mu}(z):=\lim_{n\to\infty}C_{\mu_{n}^{\mathrm{C}}}(z)=-\frac{\partial f}{\partial z}(\xi_{\pm}(z;a);a)=\log\left(1+\xi_{\pm}(z;a)\right), \quad\mbox{ for } z\in\Omega_{\mp}.
\]
\end{thm}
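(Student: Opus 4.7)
The plan is to combine the identity $C_{\mu_n^{\mathrm{C}}}(z) = P_n^{\mathrm{C}}{}'(z)/(n P_n^{\mathrm{C}}(z))$ from~\eqref{eq:cauchy_transf_root_count} with the integral representation~\eqref{eq:integr_repre_spm_charlier} and to apply the saddle point method to both the polynomial and its derivative. Since $\partial_z f(\xi;z) = -\log(1+\xi)$ follows directly from~\eqref{eq:def_f_charlier}, differentiating the integrand of~\eqref{eq:integr_repre_spm_charlier} in $z$ under the integral sign produces
\[
 \partial_z P_n^{\mathrm{C}}(z;a) = \frac{n}{2\pi\ii a^n n^n}\oint_\gamma \log(1+\xi)\, g(\xi)\, e^{-nf(\xi;z)}\dd\xi.
\]
Dividing by $n P_n^{\mathrm{C}}(z;a)$ cancels the common prefactor $1/(2\pi\ii a^n n^n)$ and one factor of $n$, yielding the compact ratio
\[
 C_{\mu_n^{\mathrm{C}}}(z) = \frac{\oint_\gamma \log(1+\xi)\, g(\xi)\, e^{-nf(\xi;z)}\dd\xi}{\oint_\gamma g(\xi)\, e^{-nf(\xi;z)}\dd\xi}.
\]

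Next I would apply Theorem~\ref{thm:saddle-point} to numerator and denominator separately. Assumptions (i)--(iv) have already been discussed in the paragraphs preceding~\eqref{eq:def_omega_pm}: the functions $f$ and $g$ are single-valued and analytic in the appropriate cut plane, the critical points of $f(\,\cdot\,;z)$ are the two roots $\xi_\pm(z;a)$ from~\eqref{eq:xi_plus_minus_charlier} and $f''(\xi_\pm;z)\neq 0$ as long as $z\neq 1\pm 2\ii\sqrt{a}$, and the absolute convergence is clear after a bounded deformation of~$\gamma$. By the definition of $\Omega_\mp$ in~\eqref{eq:def_omega_pm} the dominant saddle—the one with smaller $\Re f$, hence larger contribution to $e^{-nf}$—is $\xi_\pm$. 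Granting the existence of an admissible steepest descent contour through $\xi_\pm$ (assumption (v), deferred to Section~\ref{subsec:spm_charlier}), formula~\eqref{eq:saddle-point} applied to the denominator gives
\[
 \oint_\gamma g(\xi)\, e^{-nf(\xi;z)}\dd\xi = g(\xi_\pm)\,e^{-nf(\xi_\pm;z)}\sqrt{\frac{2\pi}{n f''(\xi_\pm;z)}}\bigl(1+O(n^{-1/2})\bigr),
\]
together with the analogous expansion for the numerator carrying the extra factor $\log(1+\xi_\pm)$ evaluated at the saddle. Every factor other than $\log(1+\xi_\pm)$ is common to the two expansions and cancels in the ratio, and the error terms $1 + O(n^{-1/2})$ survive as $1 + O(n^{-1/2})$. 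Passing to the limit $n\to\infty$ yields the claimed limit $\log(1+\xi_\pm(z;a))$ for $z\in\Omega_\mp$, which equals $-\partial_z f(\xi_\pm;z)$ by the identity above.

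The principal obstacle is the verification of assumption~(v) of Theorem~\ref{thm:saddle-point}: one must deform $\gamma$ (while keeping $0$ inside and avoiding the cut $(-\infty,-1]$) to a contour on which $\Re\bigl(f(\xi;z) - f(\xi_\pm;z)\bigr) > 0$ holds strictly off the dominant saddle $\xi_\pm$. The global geometry of the steepest descent lines of $\Re f$ depends on which of $\Omega_\pm$ contains $z$ and on the positions of the two saddle points relative to the forbidden set $(-\infty,-1]\cup\{0\}$, so several topological configurations must be distinguished; this technical analysis is the content of Section~\ref{subsec:spm_charlier}. A secondary point is that, even though $\partial_\xi f$ has a simple pole at $\xi=0$, the function $g(\xi)\log(1+\xi) = \log(1+\xi)/(\xi(1+\xi))$ in the numerator is analytic at $\xi=-1$ up to a removable issue handled by the branch convention, so no spurious singularity appears on the admissible contour. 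Once the steepest descent deformation is established, the cancellation argument above completes the proof.
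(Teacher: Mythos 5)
Your proposal follows essentially the same route as the paper: combine formula~\eqref{eq:cauchy_transf_root_count} with the contour integral representation~\eqref{eq:integr_repre_spm_charlier}, apply the saddle point method to numerator and denominator with the dominant saddle $\xi_{\pm}$ on $\Omega_{\mp}$, cancel the common factors, and defer the verification of assumption~(v) to the configuration analysis of Section~\ref{subsec:spm_charlier} --- which is exactly how the paper organizes the argument. The only inaccuracy is your side remark that $g(\xi)\log(1+\xi)$ is ``analytic at $\xi=-1$ up to a removable issue'': it is genuinely singular there, but this is immaterial since every admissible contour lies in $\C\setminus((-\infty,-1]\cup\{0\})$ and never meets that point.
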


\subsection{The limiting measure}

We identify branch cuts of the limiting Cauchy transform $C_{\mu}$ of the sequence of root counting measures of polynomials $P_{n}^{\mathrm{C}}(\,\cdot\,;a)$ and, by an application of the Plemelj--Sokhotski formula, reconstruct the limiting measure. After checking that the induced measure is a probability measure, Theorem~\ref{thm:charlier} will be established.

By Lemma~\ref{lem:localization_charlier}, roots of $P_{n}^{\mathrm{C}}(\,\cdot\,;a)$ are located in $[0,1]+2\ii\sqrt{a}[-1,1]$. Taking also symmetry~\eqref{eq:symmetry_charlier} into account, we can restrict the analysis of singularities of $C_{\mu}$ to the open rectangle $(0,1)+2\ii\sqrt{a}(0,1)$ and its boundary.

Concerning the open rectangle $(0,1)+2\ii\sqrt{a}(0,1)$, a discontinuity of $C_{\mu}$ can occur only on the cut $\Omega_{0}$ as it follows from Theorem~\ref{thm:cauchy_transf_charlier}. An application of the Plemejl--Sokhotski formula~\eqref{eq:plemelj-sokhotski} yields a measure $\mu_{1}$, induced by the jump of $C_{\mu}$ on the cut $\Omega_{0}$, is absolutely continuous and, if $\gamma:[0,1]\to\C$ is an injective parametrization which maps $(0,1)$ onto $\Omega_{0}$ with the starting point $\gamma(0)=1+2\ii\sqrt{a}$ and the end point $\gamma(1)\in(0,1)$, see Proposition~\ref{lem:Omega0_basic_prop_charlier}, the density of $\mu_{1}$ reads
\[
\frac{\dd\mu_{1}}{\dd t}(t)=\frac{\gamma'(t)}{2\pi\ii}\left(C_{\mu}(\gamma(t)+)-C_{\mu}(\gamma(t)-)\right), \quad t\in(0,1),
\]
where $C_{\mu}(\gamma(t)\pm)$ denotes the non-tangential limits at $\gamma(t)$ from regions $\Omega_{\pm}$. Using Theorem~\ref{thm:cauchy_transf_charlier} together with the identity $\partial_{\xi}f(\xi_{\pm}(z;a);a)=0$, the density of $\mu_{1}$ can be written as
\[
\frac{\dd\mu_{1}}{\dd t}(t)=\frac{1}{2\pi\ii}\frac{\dd}{\dd t}\left[f(\xi_{+}(\gamma(t);a);\gamma(t))-f(\xi_{-}(\gamma(t);a);\gamma(t))\right].
\]
It follows that the measure of $\gamma([\alpha,\beta])$, for $0\leq\alpha<\beta\leq1$, can be expressed as the difference
\[
 \mu_{1}\!\left(\gamma([\alpha,\beta])\right)=\rho(\gamma(\beta))-\rho(\gamma(\alpha)),
\]
for $\rho(z)=\left[f(\xi_{+}(z;a);z)-f(\xi_{-}(z;a);z)\right]/(2\pi\ii)$. Thus, $\mu_{1}$ coincides with the measure from the claim (i) of Theorem~\ref{thm:charlier}. 

It turns out that the only side of the rectangle $[0,1]+2\ii\sqrt{a}[0,1]$, where $C_{\mu}$ can be discontinuous, is the real segment $[0,1]$. It follow from formulas~\eqref{eq:cauchy_transf_root_count} and~\eqref{eq:symmetry_charlier} that the limiting Cauchy transform has the symmetry
\begin{equation}
 \overline{C_{\mu}(z)}=C_{\mu}(\overline{z}).
\label{eq:symmetry_cauchy_charlier}
\end{equation}
Consequently, Theorem~\ref{thm:cauchy_transf_charlier} provides us with the limiting Cauchy transform also in the rectangle $(0,1)+2\ii\sqrt{a}\,(-1,0)$ and we can check possible discontinuities of $C_{\mu}$ in $[0,1]$ by inspection of the limits from the upper and the lower half-plane.

First note that $C_{\mu}$ continuous analytically through the interval $(\gamma(1),1)$. Indeed, denoting $\Omega_{-}^{*}$ the complex conjugate of $\Omega_{-}$ and taking into account Theorem~\ref{thm:cauchy_transf_charlier} together with the symmetry~\eqref{eq:symmetry_cauchy_charlier}, we have, for $x\in(\gamma(1),1]$, the coinciding side limits
\[
 \lim_{\substack{z\to x \\ z\in\Omega_{-}}} C_{\mu}(z)=\log(1+\xi_{+}(x;a))=\lim_{\substack{z\to x \\ z\in\Omega_{-}}}\overline{C_{\mu}(z)}=\lim_{\substack{z\to x \\ z\in\Omega_{-}^{*}}}C_{\mu}(z)
\]
because $1+\xi_{+}(x;a)>0$.

On the other hand, if $x\in(0,\gamma(1))$, one has to be careful since $1+\xi_{-}(x;a)$ can be negative and one needs to take the right branch of the logarithm in the formula for $C_{\mu}(z)$ when $z$ approaches $x$ from the upper or the lower half-plane. Notice that $1+\xi_{-}(x;a)>0$ if and only if $x<a$. Consequently, one has to distinguish two cases. If $a\geq\gamma(1)$, then, similarly as above, the corresponding side limits coincide,
\[
 \lim_{\substack{z\to x \\ z\in\Omega_{+}}} C_{\mu}(z)=\log(1+\xi_{-}(x;a))=\lim_{\substack{z\to x \\ z\in\Omega_{+}}}\overline{C_{\mu}(z)}=\lim_{\substack{z\to x \\ z\in\Omega_{+}^{*}}}C_{\mu}(z),
\]
for all $x\in[0,\gamma(1))$. However, if $a<\gamma(1)$, $\Im(1+\xi_{-}(z;a))<0$ for $z\in\Omega_{+}$ approaching $x\in(a,\gamma(1))$. Therefore 
\[
 \lim_{\substack{z\to x \\ z\in\Omega_{+}}} C_{\mu}(z)=\log|1+\xi_{-}(x;a)|-\ii\pi,
\]
while
\[
\lim_{\substack{z\to x \\ z\in\Omega_{+}^{*}}} C_{\mu}(z)=\lim_{\substack{z\to x \\ z\in\Omega_{+}}} \overline{C_{\mu}(z)}=\log|1+\xi_{-}(x;a)|+\ii\pi.
\]
For $x<a$, we again have equal side limits. The discontinuity of $C_{\mu}$ on the real interval $[a,\gamma(1)]$ induces an absolutely continuous measure $\mu_{2}$, whose density reads
\[
 \frac{\dd\mu_{2}}{\dd x}(x)=-\frac{1}{2\pi\ii}\left(C_{\mu}(x+)-C_{\mu}(x-)\right)=1, \quad x\in(a,\gamma(1)),
\]
by the Plemejl--Sokhotski formula, where $C_{\mu}(x+)$ and $C_{\mu}(x-)$ are the side limits from $\Omega_{+}$ and $\Omega_{+}^{*}$, respectively. Measure $\mu_{2}$ coincides with the measure from the claim (ii) of Theorem~\ref{thm:charlier}.

At this point, it is not clear whether a significant number of roots of $P_{n}(\,\cdot\,;a)$ can cluster on the boundary of the rectangle $[0,1]+2\ii\sqrt{a}[-1,1]$, for $n\to\infty$, such that they would create another part of the limiting measure supported in these boundary sides. We will show that it is not the case. One possibility to prove it would be an analysis of discontinuities of the limiting Cauchy transform on the sides of the rectangle. However, we have no more symmetry than~\eqref{eq:symmetry_cauchy_charlier} and we did not derive a formula for $C_{\mu}$ outside the rectangle (it seems possible but the rigorous justification of the assumption (v) done in Section~\ref{subsec:spm_charlier} would be even more tedious). Another and much more straightforward possibility is a direct verification that the already obtained measure $\mu_{1}+\mu_{2}$ ($\mu_{2}:=0$, if $a>\gamma(1)$) is a probability measure, which guarantees that the limiting measure has been determined completely. This is verified in the next lemma and completes the proof of Theorem~\ref{thm:charlier}.

\begin{lem}
Suppose measures $\mu_{1}$ and $\mu_{2}$ are defined as in the claims (i) and (ii) of Theorem~\ref{thm:charlier}, respectively. Then $\mu:=\mu_{1}+\mu_{2}$ is a probability measure, i.e., $\mu(\C)=1$.
\end{lem}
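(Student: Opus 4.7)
The plan is to use the reflection symmetry of $\mu$ about the real axis and evaluate $\rho$ at the two end points of $\gamma$. Since $\mu_{1}$ is supported symmetrically on $\gamma$ and $\bar{\gamma}$, which share only the single real point $\gamma(1)$ (of $\mu_{1}$-measure zero by absolute continuity), the formula in claim~(i) of Theorem~\ref{thm:charlier} yields $\mu_{1}(\C)=2\mu_{1}(\gamma([0,1]))=2[\rho(\gamma(1))-\rho(\gamma(0))]$. At the upper-left corner $\gamma(0)=1+2\ii\sqrt{a}$ the two saddle points merge because the discriminant $(1-z)^{2}+4a$ vanishes, so $\rho(\gamma(0))=0$ directly from its definition.

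The whole task therefore reduces to computing $\rho(\gamma(1))$, and this is where branch choices matter. Writing $x:=\gamma(1)\in(0,1)$, the saddle points $\xi_{+}(x;a)>0$ and $\xi_{-}(x;a)<0$ are real with $\xi_{+}\xi_{-}=-1/a$. I would first do a short first-order expansion in $\Im z$ of the square root defining $\xi_{\pm}$ to show that both $\xi_{+}(z;a)$ and $\xi_{-}(z;a)$ acquire negative imaginary parts when $z$ lies slightly above the real axis. Consequently, as $z\to\gamma(1)$ inside $\Omega_{-}$, the principal branch of $\log\xi_{-}(z;a)$ tends to $\log|\xi_{-}(x;a)|-\ii\pi$, while $\log\xi_{+}(z;a)$ and $\log(1+\xi_{+}(z;a))$ have real limits. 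The behaviour of $\log(1+\xi_{-}(z;a))$ depends on the sign of $1+\xi_{-}(x;a)$, which changes precisely at $x=a$, so two cases must be treated.

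If $\gamma(1)\le a$, then $1+\xi_{-}(x;a)\ge 0$ and $\log(1+\xi_{-}(z;a))$ also has a real limit, so the only imaginary contribution to $f(\xi_{+};\gamma(1))-f(\xi_{-};\gamma(1))$ comes from $-\log\xi_{-}$ and equals $\ii\pi$. The corresponding real part vanishes by continuous extension of the defining relation $\Re f(\xi_{+};z)=\Re f(\xi_{-};z)$ of $\Omega_{0}$. Hence $\rho(\gamma(1))=1/2$, and since $\mu_{2}=0$ in this regime, $\mu(\C)=\mu_{1}(\C)=2\rho(\gamma(1))=1$. If instead $\gamma(1)>a$, then $1+\xi_{-}(z;a)$ lies in the third quadrant as $z\to\gamma(1)$ in $\Omega_{-}$, so $\log(1+\xi_{-})\to\log|1+\xi_{-}(x;a)|-\ii\pi$ as well. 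The first bracket in $f(\xi_{+})-f(\xi_{-})$ contributes $(a-\gamma(1))\ii\pi$ and the middle bracket a further $\ii\pi$, giving (after the real part is again killed by the $\Omega_{0}$ condition) $f(\xi_{+};\gamma(1))-f(\xi_{-};\gamma(1))=\ii\pi(1+a-\gamma(1))$, so $\rho(\gamma(1))=(1+a-\gamma(1))/2$ and $\mu_{1}(\C)=1+a-\gamma(1)$. Combined with the elementary $\mu_{2}(\C)=\gamma(1)-a$ from the unit-density part on $[a,\gamma(1)]$, one again gets $\mu(\C)=1$.

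The main obstacle is the branch bookkeeping in both cases, i.e.\ confirming that the principal logarithms of $\xi_{-}(z;a)$ and, in the second case, $1+\xi_{-}(z;a)$, really pick up $-\ii\pi$ as $z$ approaches $\gamma(1)$ from $\Omega_{-}$; this is exactly what the first-order expansion of the square root in~\eqref{eq:xi_plus_minus_charlier} delivers. Everything else is algebra together with the defining relation of $\Omega_{0}$ and the fact that the saddle points coincide at $\gamma(0)$.
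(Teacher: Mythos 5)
Your proof is correct and follows essentially the same route as the paper's: decompose the total mass as twice the mass carried by $\gamma$ plus the mass of $\mu_{2}$, get $\rho(\gamma(0))=0$ from the coalescence of the saddle points at $1+2\ii\sqrt{a}$, kill the real part of $f(\xi_{+};\gamma(1))-f(\xi_{-};\gamma(1))$ via the defining relation of $\Omega_{0}$, and read off the imaginary contributions $\pi$ resp.\ $\pi(1+a-\gamma(1))$ from the logarithms according to the sign of $1+\xi_{-}(\gamma(1);a)$, finally adding $\mu_{2}([a,\gamma(1)])=\gamma(1)-a$ in the second case. The only cosmetic difference is the branch convention at the real endpoint (you take the limit from the upper half-plane, which your expansion of the square root correctly shows gives $\Im\xi_{\pm}<0$, while the paper evaluates the principal value at the real point $\gamma(1)$ itself); this flips the sign of $\rho(\gamma(1))$ but is absorbed by the order of subtraction, so both computations land on $\mu(\C)=1$.
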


\begin{proof}
 Suppose first that $a\geq\gamma(1)$. Then $\mu=\mu_{1}$ and, by claim (i) of Theorem~\ref{thm:charlier}, we have
 \begin{equation}
  \mu(\C)=2\mu_{1}\!\left(\gamma([0,1])\right)=2\left[\rho(\gamma(0))-\rho(\gamma(1))\right],
 \label{eq:mu_mu_1_inproof}
 \end{equation}
 where 
 \[
  \rho(z)=\frac{1}{2\pi\ii}\left[f(\xi_{+}(z;a);z)-f(\xi_{-}(z;a);z)\right].
 \]
 Since $\gamma(0)=1+2\ii\sqrt{a}$ and $\xi_{+}(1+2\ii\sqrt{a};a)=\xi_{-}(1+2\ii\sqrt{a};a)$, one observes that 
 \begin{equation}
   \rho(\gamma(0))=0.
    \label{eq:rho_gamma_0_inproof}
 \end{equation}  
 
  Next, we evaluate $\rho(\gamma(1))$. Recall that $\gamma(1)$ is the unique solution of the equation 
  \[  
  \Re f(\xi_{+}(x;a);x)=\Re f(\xi_{-}(x;a);x),
  \]
  for $x\in(0,1)$. Consequently, we may write
  \[
   \rho(\gamma(1))=\frac{1}{2\pi}\Im\left[f(\xi_{+}(\gamma(1);a);\gamma(1))-f(\xi_{-}(\gamma(1);a);\gamma(1))\right].
  \]
  Since $\xi_{+}(x;a)>0$, for all $x\in(0,1)$, it readily follows from definition~\eqref{eq:def_f_charlier} that 
 \begin{equation}
  \Im f(\xi_{+}(\gamma(1);a);\gamma(1))=0.
 \label{eq:xi_plus_gamma_1_inproof}
 \end{equation}
 On the other hand, $-1<\xi_{-}(x;a)<0$, for $x\in(0,1)$, and hence
 \[  
  \Im f(\xi_{-}(\gamma(1);a);\gamma(1))=\pi.
 \]
 In total, we see that
 \begin{equation}
  \rho(\gamma(1))=-\frac{1}{2}.
 \label{eq:rho_gamma_1_inproof}
 \end{equation}
 Plugging~\eqref{eq:rho_gamma_0_inproof} and~\eqref{eq:rho_gamma_1_inproof} into~\eqref{eq:mu_mu_1_inproof} yields $\mu(\C)=1$.
 
 Second, suppose that $a<\gamma(1)$. Then we have a contribution from $\mu_{2}$ getting the entire mass
 \begin{equation}
  \mu(\C)=2\mu_{1}\!\left(\gamma([0,1])\right)+\mu_{2}\!\left([a,\gamma(1)]\right).
 \label{eq:mu_mu_1_mu_2_inproof}
 \end{equation}
 In this case, both equations~\eqref{eq:rho_gamma_0_inproof} and~\eqref{eq:xi_plus_gamma_1_inproof} remain valid. The difference is that, if $a<\gamma(1)$, $\xi_{-}(x;a)<-1$, for $x\in(a,\gamma(1))$, an hence we have also a contribution from the first logarithm in~\eqref{eq:def_f_charlier}, which yields
  \[  
  \Im f(\xi_{-}(\gamma(1);a);\gamma(1))=(a-\gamma(1))\pi+\pi.
 \]
 It follows that 
  \begin{equation}
  \mu_{1}\!\left(\gamma([0,1])\right)=-\rho(\gamma(1))=\frac{1}{2\pi}\Im f(\xi_{-}(\gamma(1);a);\gamma(1))=\frac{a-\gamma(1)+1}{2}.
 \label{eq:mu_1_gamma_inproof}
 \end{equation}
 
 By claim (ii) of Theorem~\ref{thm:charlier}, $\mu_{2}$ is a uniform measure on $[a,\gamma(1)]$, which implies
 \begin{equation}
 \mu_{2}\!\left([a,\gamma(1)]\right)=\gamma(1)-a.
 \label{eq:mu_2_mass_inproof}
 \end{equation}
 By plugging~\eqref{eq:mu_1_gamma_inproof} and~\eqref{eq:mu_2_mass_inproof} into~\eqref{eq:mu_mu_1_mu_2_inproof}, one again verifies that $\mu(\C)=1$.
 
\end{proof}

\subsection{A justification of the application of the saddle point method}\label{subsec:spm_charlier}

The aim of this section is a justification of the applicability of the saddle point method to the integral representation of $P_{n}^{\mathrm{C}}$ given by~\eqref{eq:integr_repre_spm_charlier} and its derivative which yields the formula for the limiting Cauchy transform from Theorem~\ref{thm:cauchy_transf_charlier}. More concretely, it is the skipped rigorous treatment of the assumption (v) of Theorem~\ref{thm:saddle-point} which is to be filled in order to Theorem~\ref{thm:cauchy_transf_charlier} be proved. To this end, we will discuss possible configurations of level curves determined by the equation
\begin{equation}
 \Re f(\xi;z)=c, \quad c\in\R,
\label{eq:re f_eq_c}
\end{equation}
in the $\xi$-plane, with $z\in(0,1)+2\ii\sqrt{a}(0,1)$ a fixed parameter. These level curves are not arbitrary. Properties of the function 
\begin{equation}
\Re f(\xi;z)=(a-\Re z)\log|1+\xi|+(\Im z)\arg(1+\xi)+\log|\xi|-a\Re\xi,
\label{eq:re_f_expanded}
\end{equation}
first of all its harmoniticity in $\C\setminus((-\infty,-1]\cup\{0\})$, imply significant constraints. For a~better picture, we list selected properties of the level curves defined by~\eqref{eq:re f_eq_c} here:
\begin{enumerate}
\item Function $\Re f(\,\cdot\,;z)$ attains all real values, i.e., the level curves given by~\eqref{eq:re f_eq_c} are defined for all $c\in\R$.
\item The level curves have intersection points if and only if $c=\Re f(\xi_{+};z)$ or $c=\Re f(\xi_{-};z)$, where $\xi_{\pm}=\xi_{\pm}(z;a)$ are the saddle points~\eqref{eq:xi_plus_minus_charlier}, and they occur at the points $\xi_{\pm}$ exclusively. Since, for $z\in(0,1)+2\ii\sqrt{a}(0,1)$, the saddle points $\xi_{\pm}$ are simple the crossings occur at the right angle.
\item For $c\neq\Re f(\xi_{\pm};z)$, the solution of~\eqref{eq:re f_eq_c} is a union of simple smooth connected curves. Among these curves, the only possible Jordan curve is located in $\C\setminus((-\infty,-1]\cup\{0\})$ with zero in its interior. Other curves are simple with end points either at $\infty$ or located on the cut $(-\infty,-1)$.
\item Finally, as it follows, for example, from a basic fact of the Morse theory, see~\cite[Chp.~1]{mat_02} or~\cite[Part~I]{mil_63}, two level curves given by equations $\Re f(\xi;z)=c_{1}$ and $\Re f(\xi;z)=c_{2}$, for $c_{1}<c_{2}$ are homotopic to each other unless the interval $(c_{1},c_{2})$ contains one of the critical values $\Re f(\xi_{\pm};z)$, i.e., $c_{1}<\Re f(\xi_{+};z)<c_{2}$ or $c_{1}<\Re f(\xi_{-};z)<c_{2}$. In other words, as $c$ varies, topological properties of the curves given by~\eqref{eq:re f_eq_c} may change only at $c=\Re f(\xi_{\pm};z)$.
\end{enumerate}

We will study the evolution of possible configurations of the level curves given by~\eqref{eq:re f_eq_c}, as $c$ varies from $-\infty$ to $\infty$, and discuss the existence of the path satisfying the assumption (v) of Theorem~\ref{thm:saddle-point} in each case. To reduce the number of possible configurations we will use the following two auxiliary results.

\begin{lem}\label{lem:crossings_imag_line_charlier}
For all $z\in(0,1)+2\ii\sqrt{a}\,(0,1)$ and $c\in\R$, the function $h:\R\setminus\{0\}\to\R$ defined by
\[
 h(t):=\Re f(\ii t;z)-c
\]
possesses exactly two zeros, one positive and one negative.
\end{lem}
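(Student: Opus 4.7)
The plan is to split the analysis between $t>0$ and $t<0$ and show that on each open half-line $h$ is strictly monotone with limits $-\infty$ at $0$ and $+\infty$ at $\pm\infty$, so that it has exactly one zero per half-line. The starting point is an explicit real-variable expression for $h$. Taking principal branches one has $\log(\ii t)=\log|t|+\ii\frac{\pi}{2}\sign(t)$, $\log(1+\ii t)=\frac{1}{2}\log(1+t^{2})+\ii\arctan t$, and $-a\ii t$ contributes nothing to the real part. Writing $x=\Re z\in(0,1)$ and $y=\Im z\in(0,2\sqrt{a})$ then yields
\begin{equation*}
 h(t)=\tfrac{1}{2}(a-x)\log(1+t^{2})+y\arctan(t)+\log|t|-c.
\end{equation*}

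Next I would read off the endpoint limits. As $t\to 0$ the term $\log|t|$ drives $h(t)\to-\infty$, while as $|t|\to\infty$ the dominant logarithmic contribution is $(1+a-x)\log|t|$, which tends to $+\infty$ because $x<1$ makes $1+a-x>0$. Consequently $h$ changes sign on each of $(0,\infty)$ and $(-\infty,0)$, and it suffices to establish strict monotonicity on each half-line. Differentiating gives
\begin{equation*}
 h'(t)=(a-x)\frac{t}{1+t^{2}}+\frac{y}{1+t^{2}}+\frac{1}{t}=\frac{(1+a-x)t^{2}+yt+1}{t(1+t^{2})}.
\end{equation*}

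The numerator is a quadratic in $t$ with positive leading coefficient and positive constant term, and discriminant $y^{2}-4(1+a-x)$. The rectangular constraints $x<1$ and $y<2\sqrt{a}$ yield $1+a-x>a$ and $y^{2}<4a$, so the discriminant is strictly negative. Hence the quadratic is strictly positive on $\R$, the sign of $h'(t)$ coincides with the sign of $t$, and $h$ is strictly monotone on each half-line. Combined with the limits computed above, this produces exactly one zero in each of $(0,\infty)$ and $(-\infty,0)$, which is the desired conclusion. The argument is essentially computational; the only subtlety worth flagging is the strict discriminant inequality $y^{2}<4(1+a-x)$, which is precisely what the choice of the rectangular window $(0,1)+2\ii\sqrt{a}\,(0,1)$ is tailored to deliver, so no substantial obstacle is anticipated.
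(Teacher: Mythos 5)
Your proposal is correct and follows essentially the same route as the paper: the same explicit formula for $h$, the same derivative with numerator $(1+a-\Re z)t^{2}+(\Im z)t+1$, the same positivity argument (the paper states the positivity directly, you justify it via the discriminant bound $y^{2}<4a<4(1+a-x)$), and the same endpoint limits to conclude exactly one zero on each half-line.
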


\begin{proof}
 Substituting for $\xi=\ii t$ in~\eqref{eq:re_f_expanded}, one gets the expression
 \[
  h(t)=(a-\Re z)\log\sqrt{1+t^{2}}+(\Im z)\arctan(t)+\log |t|-c,
 \]
 for $t\in\R\setminus\{0\}$. By differentiating the above expression, one obtains
 \[
  h'(t)=\frac{(a+1-\Re z)t^{2}+(\Im z)t+1}{t(1+t^{2})}
 \]
 for $t\in\R\setminus\{0\}$. The assumptions $0<\Re z <1$ and $0<\Im z<2\sqrt{a}$ guarantee that the nominator of the above formula is positive for all $t\in\R$. Consequently, $\sign h'(t)=\sign t$, which means that $h$ is strictly decreasing in $(-\infty,0)$ and strictly increasing in $(0,\infty)$. Taking also into account the limit values 
 \[
  \lim_{t\to\pm\infty}h(t)=\infty \quad\mbox{ and}\quad \lim_{t\to0}h(t)=-\infty,
 \]
 we conclude that $h$ has exactly one positive and one negative zero.
\end{proof}

\begin{lem}\label{lem:crossings_the_cut_charlier}
For all $z\in(0,1)+2\ii\sqrt{a}\,(0,1)$, with $\Re z<a$, and $c\in\R$, the functions $h_{\pm}:(0,\infty)\to\R$ defined by
\[
 h_{\pm}(t):=\lim_{\substack{\xi\to-1-t \\ \Im\xi\gtrless0}}\Re(\xi;z)-c
\]
possess at most two zeros each.
\end{lem}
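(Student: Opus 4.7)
My approach is to obtain an explicit formula for $h_\pm(t)$, differentiate once, and exploit the hypothesis $\Re z<a$ to pin down the sign of the derivative. Starting from expression~\eqref{eq:re_f_expanded} and substituting $\xi=-1-t$, the moduli and the real part are continuous at that point with values $|1+\xi|=t$, $|\xi|=1+t$, and $\Re\xi=-1-t$, while only $\arg(1+\xi)$ jumps across the cut, taking the boundary values $\pm\pi$ from the upper/lower half-plane. Collecting these contributions gives
\[
 h_\pm(t) = (a-\Re z)\log t + \log(1+t) + a(1+t) \pm \pi\Im z - c, \qquad t>0.
\]

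A direct differentiation then yields
\[
 h_\pm'(t) = \frac{a-\Re z}{t} + \frac{1}{1+t} + a,
\]
which is independent of both $c$ and of the sign~$\pm$. Under the standing hypothesis $\Re z<a$, each of the three summands is strictly positive on $(0,\infty)$, so $h_\pm'>0$ throughout. Consequently $h_\pm$ is strictly increasing and has at most one zero on $(0,\infty)$, a conclusion that is sharper than, and in particular implies, the claimed bound of at most two zeros for each of $h_+$ and $h_-$.

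No step presents a genuine obstacle. The only point requiring slight care is the correct identification of $\arg(1+\xi)=\pm\pi$ on the two sides of the cut; this contributes only additive constants that drop out upon differentiation. As a side remark, multiplying $h_\pm'(t)$ through by $t(1+t)>0$ converts the sign analysis into a quadratic inequality whose coefficients are all positive precisely when $\Re z<a$, which clarifies why the hypothesis is invoked and suggests that the weaker bound of two zeros would become the sharp one in the complementary regime $\Re z>a$.
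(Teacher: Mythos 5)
Your computation of $h_{\pm}$ and of $h_{\pm}'$ is correct, and under the hypothesis exactly as printed ($\Re z<a$) your monotonicity argument is valid and even yields a sharper conclusion: each $h_{\pm}$ is strictly increasing, tends to $-\infty$ as $t\to0+$ and to $+\infty$ as $t\to\infty$, and therefore has exactly one zero. For the record, putting your derivative over the common denominator $t(1+t)$ gives the numerator $at^{2}+(2a+1-\Re z)t+(a-\Re z)$; the constant term $-\Re z$ printed in the paper is a typo.

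You should be aware, however, that the hypothesis of the lemma is itself almost certainly a misprint, and that your proof and the paper's proof are really addressing opposite regimes. The lemma is invoked only in case~B of Section~\ref{subsec:spm_charlier}, i.e.\ when $a<\Re z$, and the paper's own argument (which asserts $\lim_{t\to0}h_{\pm}(t)=\lim_{t\to\infty}h_{\pm}(t)=+\infty$, a claim that holds only when $a-\Re z<0$) is written for that regime. There your positivity argument breaks down: the term $(a-\Re z)/t$ is negative and unbounded near $t=0$. The intended argument is the one sketched in the paper: for $a<\Re z$ the numerator quadratic has negative constant term, hence exactly one positive root, so $h_{\pm}$ decreases from $+\infty$ to a unique global minimum and then increases back to $+\infty$, which gives at most two zeros. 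Your closing remark shows you noticed precisely this dichotomy, which is to your credit; but a proof that supports the application in case~c2) of Section~\ref{subsec:spm_charlier} must cover $\Re z>a$, not $\Re z<a$, so as written your argument proves the literal statement while missing the case the paper actually needs.
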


\begin{proof}
 It follows from~\eqref{eq:re_f_expanded} that 
 \[
  h_{\pm}(t)=(a-\Re z)\log t\pm\pi\Im z+\log(1+t)+a(1+t)-c,
 \]
 which, if differentiated with respect to $t$, yields
 \[
 h_{\pm}'(t)=\frac{at^{2}+(2a+1-\Re z)t-\Re z}{t(1+t)},
 \]
 for $t>0$. One readily checks that the quadratic polynomial from the nominator has one positive and one negative root. Moreover, taking into account the assumption $\Re z<a$, one gets the limit values
 \[
  \lim_{t\to\infty}h_{\pm}(t)=\lim_{t\to0}h_{\pm}(t)=\infty.
 \]
 Consequently, both functions $h_{\pm}$ first decay from $\infty$ to their unique global minima and then grow to $\infty$, which implies the statement.
\end{proof}

In the forthcoming discussion, we denote the contour integral from~\eqref{eq:integr_repre_spm_charlier} by
\[
 I_{n}(z;a):=\oint_{\gamma}g(\xi)e^{-nf(\xi;z)}\dd\xi
\]
and the contour integral of its derivative with respect to $z$ by  
\[
I_{n}'(z;a):=\oint_{\gamma}\left(g(\xi)\,\frac{\partial f}{\partial z}(\xi;z)\right)e^{-nf(\xi;z)}\dd\xi.
\]
Then, by~\eqref{eq:cauchy_transf_root_count} and~\eqref{eq:integr_repre_spm_charlier}, the Cauchy transform of the root counting measure $\mu_{n}$ is the ratio 
\begin{equation}
 C_{\mu_{n}}(z)=-\frac{I_{n}'(z;a)}{I_{n}(z;a)}.
\label{eq:C_mu_n_I_n_charlier}
\end{equation}
For $z\in(0,1)+2\ii\sqrt{a}\,(0,1)$ given, we distinguish three cases:
\[
\mbox{A) } a>\Re z, \quad\;
\mbox{B) } a<\Re z, \quad\;
\mbox{C) } a=\Re z.
\]

A) \textbf{The case $a>\Re z$:}
Recall that our intention is the study of an evolution of the level curves given by~\eqref{eq:re f_eq_c} as $c$ varies from $-\infty$ to $\infty$.  Since $a>\Re z$ we have the limit values
\[
 \lim_{\xi\to-1} \Re f(\xi;z)=\lim_{\xi\to0} \Re f(\xi;z)=\lim_{\Re\xi\to\infty} \Re f(\xi;z)=-\infty
\]
and
\[
 \lim_{\Re\xi\to-\infty} \Re f(\xi;z)=\infty
\]
that are determined by the first, third, and forth term on the right-hand side of~\eqref{eq:re_f_expanded}. 

By inspection of~\eqref{eq:re_f_expanded}, we see that, for $c$ very small, i.e., $c\to-\infty$, solutions of~\eqref{eq:re f_eq_c} consist of three components which are small perturbations of the following curves: a spiral-like curve encircling the point $-1$, a circle centered at $0$, and a vertical line intersecting the point $-c/a$. These three limit components are denoted by $\ell_{-1}$, $\ell_{0}$, and $\ell_{\infty}$, respectively. Indeed, for $c\to-\infty$, the solutions of 
\[
 \Re f(\xi;z)=\Re\left[(a-z)\log(1+\xi)\right]+\log|\xi|-a\Re\xi=c
\]
is approximately determined by one of the terms depending on whether $\xi$ is close to $-1$, $0$ or of a large real part. The remaining terms can be neglected as a small perturbation. Thus, $\ell_{-1}$ is a curve given approximately by the equation
\[
 \Re\left[(a-z)\log(1+\xi)\right]=c,
\]
for $c\to-\infty$, which is equivalent to the equation
\[
 (a-\Re z)\log r+(\Im z)\phi=c,
\]
for $\xi=-1+r\exp(\ii\phi)$. Hence, as $c\to\-\infty$, $\ell_{-1}$ is close to the logarithmic spiral around the point $-1$ given in polar coordinates by the equation
\[
 r=\exp\left(\frac{c-(\Im z)\phi}{a-\Re z}\right).
\]
Similarly, for $c$ small, $\ell_{0}$ approaches the circle given by the equation $\log|\xi|=c$, i.e., $|\xi|=e^{c}$. Lastly, $\ell_{\infty}$ is, for $c\to-\infty$, close to the vertical line given by equation $-a\Re\xi=c$. The schematic picture of the solutions of $\Re f(z;\xi)=c$ ,for $c$ approaching $-\infty$, is depicted in Figure~\ref{fig:conf_c<<0_charlier}.
\begin{figure}[htb]
\begin{center}

\begin{tikzpicture}[x=0.75pt,y=0.75pt,yscale=-1,xscale=1]
\clip   (32.8,6) rectangle (571.8,228) ;
\draw  [color={rgb, 255:red, 65; green, 117; blue, 5 }  ,draw opacity=1 ][fill={rgb, 255:red, 65; green, 117; blue, 5 }  ,fill opacity=0.5 ][line width=2.25]  (183.8,101) .. controls (190.8,67) and (237.8,82) .. (239.8,103) .. controls (244.8,137) and (182.8,156) .. (167.8,104) .. controls (145.8,103) and (209.8,103) .. (183.8,101) -- cycle ;
\draw  [color={rgb, 255:red, 208; green, 2; blue, 27 }  ,draw opacity=1 ][fill={rgb, 255:red, 208; green, 2; blue, 27 }  ,fill opacity=0.5 ][line width=2.25]  (327.8,104) .. controls (332.8,50) and (415.8,66) .. (420.8,104) .. controls (425.8,142) and (322.8,158) .. (327.8,104) -- cycle ;
\draw    (110.8,103) -- (660.8,103) ;
\draw [color={rgb, 255:red, 139; green, 87; blue, 42 }  ,draw opacity=1 ][line width=2.25]    (25.8,102) .. controls (27.47,100.35) and (29.14,100.36) .. (30.8,102.03) .. controls (32.46,103.7) and (34.13,103.71) .. (35.8,102.06) .. controls (37.47,100.41) and (39.14,100.42) .. (40.8,102.09) .. controls (42.46,103.76) and (44.13,103.77) .. (45.8,102.12) .. controls (47.47,100.46) and (49.14,100.47) .. (50.8,102.14) .. controls (52.46,103.81) and (54.13,103.82) .. (55.8,102.17) .. controls (57.47,100.52) and (59.14,100.53) .. (60.8,102.2) .. controls (62.46,103.87) and (64.13,103.88) .. (65.8,102.23) .. controls (67.47,100.58) and (69.14,100.59) .. (70.8,102.26) .. controls (72.46,103.93) and (74.13,103.94) .. (75.8,102.29) .. controls (77.47,100.64) and (79.14,100.65) .. (80.8,102.32) .. controls (82.46,103.99) and (84.13,104) .. (85.8,102.35) .. controls (87.47,100.7) and (89.14,100.71) .. (90.8,102.38) .. controls (92.46,104.05) and (94.13,104.06) .. (95.8,102.4) .. controls (97.47,100.75) and (99.14,100.76) .. (100.8,102.43) .. controls (102.46,104.1) and (104.13,104.11) .. (105.8,102.46) .. controls (107.47,100.81) and (109.14,100.82) .. (110.8,102.49) .. controls (112.46,104.16) and (114.13,104.17) .. (115.8,102.52) .. controls (117.47,100.87) and (119.14,100.88) .. (120.8,102.55) .. controls (122.46,104.22) and (124.13,104.23) .. (125.8,102.58) .. controls (127.47,100.93) and (129.14,100.94) .. (130.8,102.61) .. controls (132.46,104.28) and (134.13,104.29) .. (135.8,102.64) .. controls (137.47,100.98) and (139.14,100.99) .. (140.8,102.66) .. controls (142.46,104.33) and (144.13,104.34) .. (145.8,102.69) .. controls (147.47,101.04) and (149.14,101.05) .. (150.8,102.72) .. controls (152.46,104.39) and (154.13,104.4) .. (155.8,102.75) .. controls (157.47,101.1) and (159.14,101.11) .. (160.8,102.78) .. controls (162.46,104.45) and (164.13,104.46) .. (165.8,102.81) .. controls (167.47,101.16) and (169.14,101.17) .. (170.8,102.84) .. controls (172.46,104.51) and (174.13,104.52) .. (175.8,102.87) .. controls (177.47,101.22) and (179.14,101.23) .. (180.8,102.9) .. controls (182.46,104.57) and (184.13,104.58) .. (185.8,102.92) .. controls (187.47,101.27) and (189.14,101.28) .. (190.8,102.95) .. controls (192.46,104.62) and (194.13,104.63) .. (195.8,102.98) -- (198.8,103) -- (198.8,103) ;
\draw    (360.8,12) -- (359.8,227) ;
\draw    (199.8,96) -- (199.8,110) ;
\draw  [color={rgb, 255:red, 74; green, 144; blue, 226 }  ,draw opacity=1 ][fill={rgb, 255:red, 74; green, 144; blue, 226 }  ,fill opacity=0.51 ][line width=2.25]  (534.8,2) .. controls (558.8,-23) and (626.8,-35) .. (658.8,0) .. controls (659.8,43) and (660.8,190) .. (659.8,232) .. controls (645.8,279) and (548.8,260) .. (525.8,233) .. controls (511.8,194) and (483.8,161) .. (478.8,118) .. controls (477.8,81) and (519.8,46) .. (534.8,2) -- cycle ;

\draw (186,115.4) node [anchor=north west][inner sep=0.75pt]    {$-1$};
\draw (342,111.4) node [anchor=north west][inner sep=0.75pt]    {$0$};
\draw (217,147.4) node [anchor=north west][inner sep=0.75pt]  [color={rgb, 255:red, 65; green, 117; blue, 5 }  ,opacity=1 ]  {$\ell _{-1}$};
\draw (394,142.4) node [anchor=north west][inner sep=0.75pt]  [color={rgb, 255:red, 208; green, 2; blue, 27 }  ,opacity=1 ]  {$\ell _{0}$};
\draw (54,21.4) node [anchor=north west][inner sep=0.75pt]    {$\Re f( \xi;z) =c\sim -\infty $};
\draw (460,149.4) node [anchor=north west][inner sep=0.75pt]  [color={rgb, 255:red, 74; green, 144; blue, 226 }  ,opacity=1 ]  {$\ell _{\infty }$};
\end{tikzpicture}
\end{center}
\caption{The topological configuration of the level curves~$\ell_{-1}$, $\ell_{0}$, and $\ell_{\infty}$ when $c$ approaches $-\infty$ and $\Re z<a$. The filled regions are the regions, where the value of $\Re f(\xi,z)$ is less than the level~$c$.} \label{fig:conf_c<<0_charlier}
\end{figure}
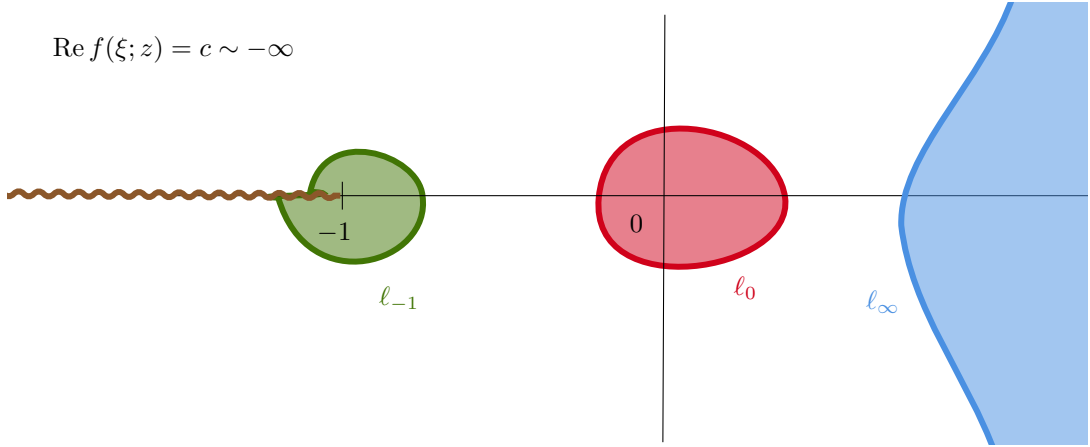

By a similar reasoning, we see that, for $c$ large, the level curve $\Re f(\xi;z)=c$ is approximately the vertical line $\ell_{-\infty}$ determined by the equation $-a\Re\xi=c$ (interrupted by the cut $(-\infty,-1]$).

Suppose $z\in\Omega_{-}$, which means that $\Re f(\xi_{+};a)<\Re f(\xi_{-};a)$, see~\eqref{eq:def_omega_pm}. Then, as $c$ varies from $-\infty$ to $\infty$, the three curves $\ell_{-1}$, $\ell_{0}$, and $\ell_{\infty}$ evolves without intersecting as long as $c<\Re f(\xi_{+};z)$. At $c=\Re f(\xi_{+};z)$, two of components of $\ell_{-1}$, $\ell_{0}$, and $\ell_{\infty}$ merge at the point $\xi=\xi_{+}$. As $c$ further increases, the two remaining components merge when $c=\Re f(\xi_{-};z)$ at $\xi=\xi_{-}$ to form the component $\ell_{-\infty}$. Clearly, if $z\in\Omega_{+}$, the situation is analogous with the roles of $\xi_{+}$ and $\xi_{-}$ interchanged. Next, we discuss the three possible combinations of which components of $\ell_{-1}$, $\ell_{0}$, and $\ell_{\infty}$ merge first.
a) \textbf{Components $\ell_{-1}$ and $\ell_{0}$ merge first:} 
Recall that we investigate whether the configuration admits the existence of a Jordan curve $\gamma$ with $0$ in its interior, located in $\C\setminus\left((-\infty,-1]\cup\{0\}\right)$, and such that $\Re f(\xi;z)>\Re f(\xi_{+};z)$, for all $\xi$ on the image of $\gamma$ except at $\xi=\xi_{+}$, which is exactly the assumption~(v) of Theorem~\ref{thm:saddle-point}. This is clearly possible in this configuration, see Figure~\ref{fig:conf_ell0_ell-1_merge_charlier} for an illustration. Hence the saddle point method is readily applicable to both $I_{n}(z;a)$ as well as $I_{n}'(z;a)$ and Theorem~\ref{thm:saddle-point} provides us with the asymptotic formulas
\[
 I_{n}(z;a)=g(\xi_{+})e^{-nf(\xi_{+};z)}\sqrt{\frac{2\pi}{n\partial_{\xi}^{2}f(\xi_{+};z)}}\left(1+O\left(\frac{1}{\sqrt{n}}\right)\!\right)\!, \quad \mbox{ for } n\to\infty,
\]
and 
\[
I_{n}'(z;a)=g(\xi_{+})(\partial_{z}f(\xi_{+};z))e^{-nf(\xi_{+};z)}\sqrt{\frac{2\pi}{n\partial_{\xi}^{2}f(\xi_{+};z)}}\left(1+O\left(\frac{1}{\sqrt{n}}\right)\!\right)\!, \quad \mbox{ for } n\to\infty.
\]
It follows that
\begin{equation}
 \lim_{n\to\infty}-\frac{I_{n}(z;a)}{I_{n}'(z;a)}=-\frac{\partial f}{\partial z}(\xi_{+};z),
\label{eq:ratio_I_n_asympt}
\end{equation}
which, together with~\eqref{eq:C_mu_n_I_n_charlier}, implies the desired formula of Theorem~\ref{thm:cauchy_transf_charlier}.
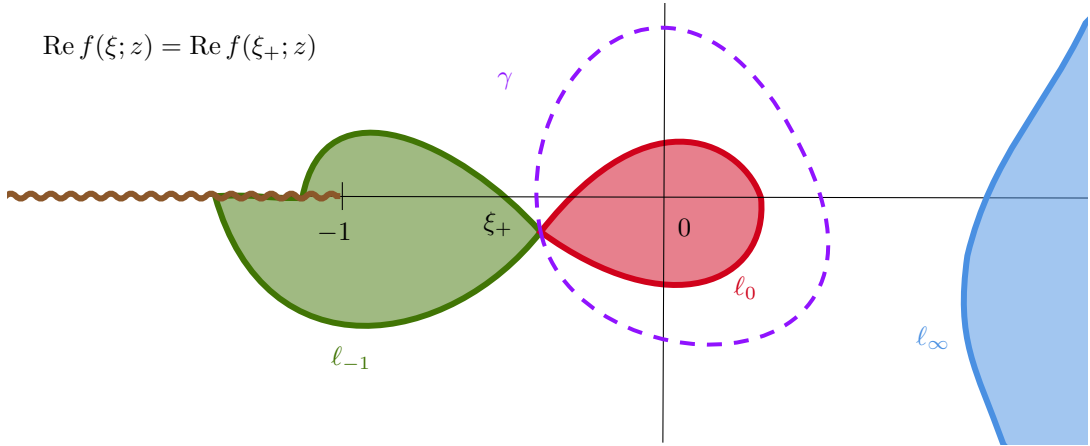
\begin{figure}[htb]
\begin{center}
\begin{tikzpicture}[x=0.75pt,y=0.75pt,yscale=-1,xscale=1]
\clip   (32.8,6) rectangle (571.8,228) ;
\draw  [color={rgb, 255:red, 65; green, 117; blue, 5 }  ,draw opacity=1 ][fill={rgb, 255:red, 65; green, 117; blue, 5 }  ,fill opacity=0.5 ][line width=2.25]  (179.8,104) .. controls (190.8,44) and (258.8,76) .. (298.8,121) .. controls (252.8,180) and (161.8,194) .. (136.3,103) .. controls (162.8,102) and (152.8,104) .. (179.8,104) -- cycle ;
\draw  [color={rgb, 255:red, 208; green, 2; blue, 27 }  ,draw opacity=1 ][fill={rgb, 255:red, 208; green, 2; blue, 27 }  ,fill opacity=0.5 ][line width=2.25]  (298.8,121) .. controls (354.8,49) and (400.8,78) .. (408.8,104) .. controls (412.8,147) and (362.8,168) .. (298.8,121) -- cycle ;
\draw    (110.8,103) -- (587.8,104) ;
\draw [color={rgb, 255:red, 139; green, 87; blue, 42 }  ,draw opacity=1 ][line width=2.25]    (21.8,103) .. controls (23.47,101.33) and (25.13,101.33) .. (26.8,103) .. controls (28.47,104.67) and (30.13,104.67) .. (31.8,103) .. controls (33.47,101.33) and (35.13,101.33) .. (36.8,103) .. controls (38.47,104.67) and (40.13,104.67) .. (41.8,103) .. controls (43.47,101.33) and (45.13,101.33) .. (46.8,103) .. controls (48.47,104.67) and (50.13,104.67) .. (51.8,103) .. controls (53.47,101.33) and (55.13,101.33) .. (56.8,103) .. controls (58.47,104.67) and (60.13,104.67) .. (61.8,103) .. controls (63.47,101.33) and (65.13,101.33) .. (66.8,103) .. controls (68.47,104.67) and (70.13,104.67) .. (71.8,103) .. controls (73.47,101.33) and (75.13,101.33) .. (76.8,103) .. controls (78.47,104.67) and (80.13,104.67) .. (81.8,103) .. controls (83.47,101.33) and (85.13,101.33) .. (86.8,103) .. controls (88.47,104.67) and (90.13,104.67) .. (91.8,103) .. controls (93.47,101.33) and (95.13,101.33) .. (96.8,103) .. controls (98.47,104.67) and (100.13,104.67) .. (101.8,103) .. controls (103.47,101.33) and (105.13,101.33) .. (106.8,103) .. controls (108.47,104.67) and (110.13,104.67) .. (111.8,103) .. controls (113.47,101.33) and (115.13,101.33) .. (116.8,103) .. controls (118.47,104.67) and (120.13,104.67) .. (121.8,103) .. controls (123.47,101.33) and (125.13,101.33) .. (126.8,103) .. controls (128.47,104.67) and (130.13,104.67) .. (131.8,103) .. controls (133.47,101.33) and (135.13,101.33) .. (136.8,103) .. controls (138.47,104.67) and (140.13,104.67) .. (141.8,103) .. controls (143.47,101.33) and (145.13,101.33) .. (146.8,103) .. controls (148.47,104.67) and (150.13,104.67) .. (151.8,103) .. controls (153.47,101.33) and (155.13,101.33) .. (156.8,103) .. controls (158.47,104.67) and (160.13,104.67) .. (161.8,103) .. controls (163.47,101.33) and (165.13,101.33) .. (166.8,103) .. controls (168.47,104.67) and (170.13,104.67) .. (171.8,103) .. controls (173.47,101.33) and (175.13,101.33) .. (176.8,103) .. controls (178.47,104.67) and (180.13,104.67) .. (181.8,103) .. controls (183.47,101.33) and (185.13,101.33) .. (186.8,103) .. controls (188.47,104.67) and (190.13,104.67) .. (191.8,103) .. controls (193.47,101.33) and (195.13,101.33) .. (196.8,103) -- (198.8,103) -- (198.8,103) ;
\draw    (360.8,4) -- (359.8,227) ;
\draw    (199.8,96) -- (199.8,110) ;
\draw  [color={rgb, 255:red, 74; green, 144; blue, 226 }  ,draw opacity=1 ][fill={rgb, 255:red, 74; green, 144; blue, 226 }  ,fill opacity=0.51 ][line width=2.25]  (570.8,16) .. controls (594.8,-9) and (626.8,-35) .. (658.8,0) .. controls (659.8,43) and (660.8,190) .. (659.8,232) .. controls (645.8,279) and (552.8,255) .. (529.8,228) .. controls (515.8,189) and (505.8,176) .. (511.8,133) .. controls (523.8,83) and (549.8,60) .. (570.8,16) -- cycle ;

\draw  [color={rgb, 255:red, 144; green, 19; blue, 254 }  ,draw opacity=1 ][dash pattern={on 5.63pt off 4.5pt}][line width=1.5]  (298.8,121) .. controls (315.8,200) and (489.8,205) .. (429.8,81) .. controls (369.8,-43) and (281.8,42) .. (298.8,121) -- cycle ;

\draw (186,115.4) node [anchor=north west][inner sep=0.75pt]    {$-1$};
\draw (366,113.4) node [anchor=north west][inner sep=0.75pt]    {$0$};
\draw (193,177.4) node [anchor=north west][inner sep=0.75pt]  [color={rgb, 255:red, 65; green, 117; blue, 5 }  ,opacity=1 ]  {$\ell _{-1}$};
\draw (394,142.4) node [anchor=north west][inner sep=0.75pt]  [color={rgb, 255:red, 208; green, 2; blue, 27 }  ,opacity=1 ]  {$\ell _{0}$};
\draw (49,20.4) node [anchor=north west][inner sep=0.75pt]    {$\Re f( \xi ;z) =\Re f( \xi _{+} ;z)$};
\draw (485,167.4) node [anchor=north west][inner sep=0.75pt]  [color={rgb, 255:red, 74; green, 144; blue, 226 }  ,opacity=1 ]  {$\ell _{\infty }$};
\draw (269,109.4) node [anchor=north west][inner sep=0.75pt]  [color={rgb, 255:red, 0; green, 0; blue, 0 }  ,opacity=1 ]  {$\xi _{+}$};
\draw (276,39.4) node [anchor=north west][inner sep=0.75pt]  [color={rgb, 255:red, 65; green, 117; blue, 5 }  ,opacity=1 ]  {$\textcolor[rgb]{0.56,0.07,1}{\gamma }$};
\end{tikzpicture}
\end{center}
\caption{The topological configuration when $\ell_{-1}$ and $\ell_{0}$ merge first. The dashed curve is a possible choice of the Jordan curve $\gamma$ satisfying the assumption~(v) of Theorem~\ref{thm:saddle-point}.} \label{fig:conf_ell0_ell-1_merge_charlier}
\end{figure}
b) \textbf{Components $\ell_{0}$ and $\ell_{\infty}$ merge first:} This configuration is completely analogous to the previous one, justifies the applicability of the saddle point method, and results in the same asymptotic formulas for $I_{n}(z;a)$ and $I_{n}'(z;a)$. An illustration is depicted in Figure~\ref{fig:conf_ell0_ellinf_merge_charlier}.

\begin{figure}[htb]
\begin{center}
\begin{tikzpicture}[x=0.75pt,y=0.75pt,yscale=-1,xscale=1]
\clip   (32.8,6) rectangle (571.8,228) ;
\draw  [color={rgb, 255:red, 65; green, 117; blue, 5 }  ,draw opacity=1 ][fill={rgb, 255:red, 65; green, 117; blue, 5 }  ,fill opacity=0.5 ][line width=2.25]  (179.8,104) .. controls (188.8,52) and (252.8,78) .. (264.8,109) .. controls (272.8,138) and (177.3,194) .. (151.8,103) .. controls (182.8,101) and (152.8,104) .. (179.8,104) -- cycle ;
\draw  [color={rgb, 255:red, 208; green, 2; blue, 27 }  ,draw opacity=1 ][fill={rgb, 255:red, 208; green, 2; blue, 27 }  ,fill opacity=0.5 ][line width=2.25]  (316.8,110) .. controls (329.8,44) and (413.8,64) .. (449.8,119) .. controls (399.8,162) and (327.8,165) .. (316.8,110) -- cycle ;
\draw    (110.8,103) -- (587.8,104) ;
\draw [color={rgb, 255:red, 139; green, 87; blue, 42 }  ,draw opacity=1 ][line width=2.25]    (17.8,102) .. controls (19.47,100.35) and (21.14,100.36) .. (22.8,102.03) .. controls (24.46,103.7) and (26.13,103.71) .. (27.8,102.06) .. controls (29.47,100.4) and (31.14,100.41) .. (32.8,102.08) .. controls (34.46,103.75) and (36.13,103.76) .. (37.8,102.11) .. controls (39.47,100.46) and (41.14,100.47) .. (42.8,102.14) .. controls (44.46,103.81) and (46.13,103.82) .. (47.8,102.17) .. controls (49.47,100.51) and (51.14,100.52) .. (52.8,102.19) .. controls (54.46,103.86) and (56.13,103.87) .. (57.8,102.22) .. controls (59.47,100.57) and (61.14,100.58) .. (62.8,102.25) .. controls (64.46,103.92) and (66.13,103.93) .. (67.8,102.28) .. controls (69.47,100.62) and (71.14,100.63) .. (72.8,102.3) .. controls (74.46,103.97) and (76.13,103.98) .. (77.8,102.33) .. controls (79.47,100.68) and (81.14,100.69) .. (82.8,102.36) .. controls (84.46,104.03) and (86.13,104.04) .. (87.8,102.39) .. controls (89.47,100.73) and (91.14,100.74) .. (92.8,102.41) .. controls (94.46,104.08) and (96.13,104.09) .. (97.8,102.44) .. controls (99.47,100.79) and (101.14,100.8) .. (102.8,102.47) .. controls (104.46,104.14) and (106.13,104.15) .. (107.8,102.5) .. controls (109.47,100.84) and (111.14,100.85) .. (112.8,102.52) .. controls (114.46,104.19) and (116.13,104.2) .. (117.8,102.55) .. controls (119.47,100.9) and (121.14,100.91) .. (122.8,102.58) .. controls (124.46,104.25) and (126.13,104.26) .. (127.8,102.61) .. controls (129.47,100.96) and (131.14,100.97) .. (132.8,102.64) .. controls (134.46,104.31) and (136.13,104.32) .. (137.8,102.66) .. controls (139.47,101.01) and (141.14,101.02) .. (142.8,102.69) .. controls (144.46,104.36) and (146.13,104.37) .. (147.8,102.72) .. controls (149.47,101.07) and (151.14,101.08) .. (152.8,102.75) .. controls (154.46,104.42) and (156.13,104.43) .. (157.8,102.77) .. controls (159.47,101.12) and (161.14,101.13) .. (162.8,102.8) .. controls (164.46,104.47) and (166.13,104.48) .. (167.8,102.83) .. controls (169.47,101.18) and (171.14,101.19) .. (172.8,102.86) .. controls (174.46,104.53) and (176.13,104.54) .. (177.8,102.88) .. controls (179.47,101.23) and (181.14,101.24) .. (182.8,102.91) .. controls (184.46,104.58) and (186.13,104.59) .. (187.8,102.94) .. controls (189.47,101.29) and (191.14,101.3) .. (192.8,102.97) .. controls (194.46,104.64) and (196.13,104.65) .. (197.8,102.99) -- (198.8,103) -- (198.8,103) ;
\draw    (360.8,4) -- (359.8,227) ;
\draw    (199.8,96) -- (199.8,110) ;
\draw  [color={rgb, 255:red, 74; green, 144; blue, 226 }  ,draw opacity=1 ][fill={rgb, 255:red, 74; green, 144; blue, 226 }  ,fill opacity=0.51 ][line width=2.25]  (541.8,8) .. controls (565.8,-17) and (626.8,-35) .. (658.8,0) .. controls (659.8,43) and (660.8,190) .. (659.8,232) .. controls (645.8,279) and (536.8,257) .. (513.8,230) .. controls (493.8,191) and (497.8,182) .. (449.8,119) .. controls (509.8,71) and (509.8,44) .. (541.8,8) -- cycle ;

\draw  [color={rgb, 255:red, 144; green, 19; blue, 254 }  ,draw opacity=1 ][dash pattern={on 5.63pt off 4.5pt}][line width=1.5]  (298.8,121) .. controls (302.8,222) and (436.8,200) .. (449.8,119) .. controls (462.8,38) and (294.8,20) .. (298.8,121) -- cycle ;

\draw (186,115.4) node [anchor=north west][inner sep=0.75pt]    {$-1$};
\draw (366,113.4) node [anchor=north west][inner sep=0.75pt]    {$0$};
\draw (189,154.4) node [anchor=north west][inner sep=0.75pt]  [color={rgb, 255:red, 65; green, 117; blue, 5 }  ,opacity=1 ]  {$\ell _{-1}$};
\draw (366,152.4) node [anchor=north west][inner sep=0.75pt]  [color={rgb, 255:red, 208; green, 2; blue, 27 }  ,opacity=1 ]  {$\ell _{0}$};
\draw (52,16.4) node [anchor=north west][inner sep=0.75pt]    {$\Re f( \xi ;z) =\Re f( \xi _{+};z)$};
\draw (477,198.4) node [anchor=north west][inner sep=0.75pt]  [color={rgb, 255:red, 74; green, 144; blue, 226 }  ,opacity=1 ]  {$\ell _{\infty }$};
\draw (468,115.4) node [anchor=north west][inner sep=0.75pt]  [color={rgb, 255:red, 0; green, 0; blue, 0 }  ,opacity=1 ]  {$\xi _{+}$};
\draw (302,43.4) node [anchor=north west][inner sep=0.75pt]  [color={rgb, 255:red, 65; green, 117; blue, 5 }  ,opacity=1 ]  {$\textcolor[rgb]{0.56,0.07,1}{\gamma }$};
\end{tikzpicture}
\end{center}
\caption{The topological configuration when $\ell_{0}$ and $\ell_{\infty}$ merge first. The dashed curve is a possible choice of the Jordan curve $\gamma$ satisfying the assumption~(v) of Theorem~\ref{thm:saddle-point}.} \label{fig:conf_ell0_ellinf_merge_charlier}
\end{figure}
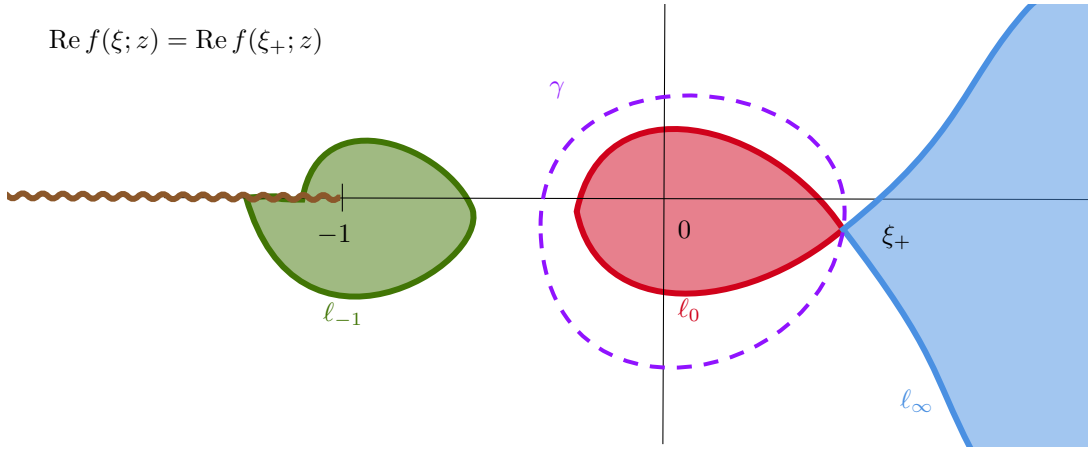
c) \textbf{Components $\ell_{-1}$ and $\ell_{\infty}$ merge first:} 
This configuration is impossible since more than two crossings of the level curves with the imaginary line are inevitable, however, as it follows from Lemma~\ref{lem:crossings_imag_line_charlier}, exactly two crossings occur. The situation is illustrated in Figure~\ref{fig:conf_ell-1_ellinf_merge_charlier}.
\begin{figure}[htb]
\begin{center}
\begin{tikzpicture}[x=0.75pt,y=0.75pt,yscale=-1,xscale=1]
\clip   (32.8,6) rectangle (571.8,228) ;
\draw  [color={rgb, 255:red, 65; green, 117; blue, 5 }  ,draw opacity=1 ][fill={rgb, 255:red, 65; green, 117; blue, 5 }  ,fill opacity=0.5 ][line width=2.25]  (337.93,75.93) .. controls (320.27,109.6) and (289.27,117.93) .. (252.6,150.6) .. controls (217.27,190.6) and (180.6,207.27) .. (182.47,195.33) .. controls (97.27,193.27) and (96.6,194.6) .. (25.27,193.27) .. controls (22.6,155.27) and (-8.89,112.77) .. (30.6,93.93) .. controls (117.93,17.27) and (236.6,9.93) .. (337.93,75.93) -- cycle ;
\draw  [color={rgb, 255:red, 208; green, 2; blue, 27 }  ,draw opacity=1 ][fill={rgb, 255:red, 208; green, 2; blue, 27 }  ,fill opacity=0.5 ][line width=2.25]  (341.27,209.27) .. controls (313.93,203.93) and (318.6,155.27) .. (343.93,151.93) .. controls (369.27,148.6) and (368.6,214.6) .. (341.27,209.27) -- cycle ;
\draw    (107.8,196) -- (584.8,197) ;
\draw [color={rgb, 255:red, 139; green, 87; blue, 42 }  ,draw opacity=1 ][line width=2.25]    (8.8,195) .. controls (10.47,193.34) and (12.13,193.34) .. (13.8,195.01) .. controls (15.47,196.68) and (17.13,196.68) .. (18.8,195.02) .. controls (20.47,193.36) and (22.13,193.36) .. (23.8,195.03) .. controls (25.47,196.7) and (27.13,196.7) .. (28.8,195.04) .. controls (30.47,193.38) and (32.13,193.38) .. (33.8,195.05) .. controls (35.47,196.72) and (37.13,196.72) .. (38.8,195.06) .. controls (40.47,193.4) and (42.13,193.4) .. (43.8,195.07) .. controls (45.47,196.74) and (47.13,196.74) .. (48.8,195.08) .. controls (50.47,193.42) and (52.13,193.42) .. (53.8,195.09) .. controls (55.47,196.76) and (57.13,196.76) .. (58.8,195.1) .. controls (60.47,193.44) and (62.13,193.44) .. (63.8,195.11) .. controls (65.47,196.78) and (67.13,196.78) .. (68.8,195.12) .. controls (70.47,193.45) and (72.13,193.45) .. (73.8,195.12) .. controls (75.47,196.79) and (77.13,196.79) .. (78.8,195.13) .. controls (80.47,193.47) and (82.13,193.47) .. (83.8,195.14) .. controls (85.47,196.81) and (87.13,196.81) .. (88.8,195.15) .. controls (90.47,193.49) and (92.13,193.49) .. (93.8,195.16) .. controls (95.47,196.83) and (97.13,196.83) .. (98.8,195.17) .. controls (100.47,193.51) and (102.13,193.51) .. (103.8,195.18) .. controls (105.47,196.85) and (107.13,196.85) .. (108.8,195.19) .. controls (110.47,193.53) and (112.13,193.53) .. (113.8,195.2) .. controls (115.47,196.87) and (117.13,196.87) .. (118.8,195.21) .. controls (120.47,193.55) and (122.13,193.55) .. (123.8,195.22) .. controls (125.47,196.89) and (127.13,196.89) .. (128.8,195.23) .. controls (130.47,193.57) and (132.13,193.57) .. (133.8,195.24) .. controls (135.47,196.91) and (137.13,196.91) .. (138.8,195.25) .. controls (140.47,193.59) and (142.13,193.59) .. (143.8,195.26) .. controls (145.47,196.93) and (147.13,196.93) .. (148.8,195.27) .. controls (150.47,193.61) and (152.13,193.61) .. (153.8,195.28) .. controls (155.47,196.95) and (157.13,196.95) .. (158.8,195.29) .. controls (160.47,193.63) and (162.13,193.63) .. (163.8,195.3) .. controls (165.47,196.97) and (167.13,196.97) .. (168.8,195.31) .. controls (170.47,193.65) and (172.13,193.65) .. (173.8,195.32) .. controls (175.47,196.99) and (177.13,196.99) .. (178.8,195.33) -- (182.47,195.33) -- (182.47,195.33) ;
\draw    (342.8,6) -- (341.8,229) ;
\draw    (183.47,187.67) -- (183.47,201.67) ;
\draw  [color={rgb, 255:red, 74; green, 144; blue, 226 }  ,draw opacity=1 ][fill={rgb, 255:red, 74; green, 144; blue, 226 }  ,fill opacity=0.51 ][line width=2.25]  (365.27,5.93) .. controls (382.6,-45.27) and (626.8,-35) .. (658.8,0) .. controls (659.8,43) and (660.8,190) .. (659.8,232) .. controls (645.8,279) and (595.6,237.73) .. (572.6,210.73) .. controls (545.6,179.73) and (497.68,140.5) .. (459.93,121.4) .. controls (409.93,96.73) and (394.6,103.33) .. (337.93,75.93) .. controls (356.6,31.4) and (357.27,34.6) .. (365.27,5.93) -- cycle ;

\draw (154,204.73) node [anchor=north west][inner sep=0.75pt]    {$-1$};
\draw (359.33,206.07) node [anchor=north west][inner sep=0.75pt]    {$0$};
\draw (237.67,173.73) node [anchor=north west][inner sep=0.75pt]  [color={rgb, 255:red, 65; green, 117; blue, 5 }  ,opacity=1 ]  {$\ell _{-1}$};
\draw (366,152.4) node [anchor=north west][inner sep=0.75pt]  [color={rgb, 255:red, 208; green, 2; blue, 27 }  ,opacity=1 ]  {$\ell _{0}$};
\draw (38.67,9.07) node [anchor=north west][inner sep=0.75pt]    {$\Re f( \xi;z) =\Re f( \xi _{+};z)$};
\draw (497.67,161.07) node [anchor=north west][inner sep=0.75pt]  [color={rgb, 255:red, 74; green, 144; blue, 226 }  ,opacity=1 ]  {$\ell _{\infty }$};
\draw (317.33,42.07) node [anchor=north west][inner sep=0.75pt]  {$\xi _{+}$};
\end{tikzpicture}
\end{center}
\caption{The topological configuration when $\ell_{-1}$ and $\ell_{\infty}$ merge first. There are necessarily more than two crossings of the level curves with the imaginary line.} \label{fig:conf_ell-1_ellinf_merge_charlier}
\end{figure}
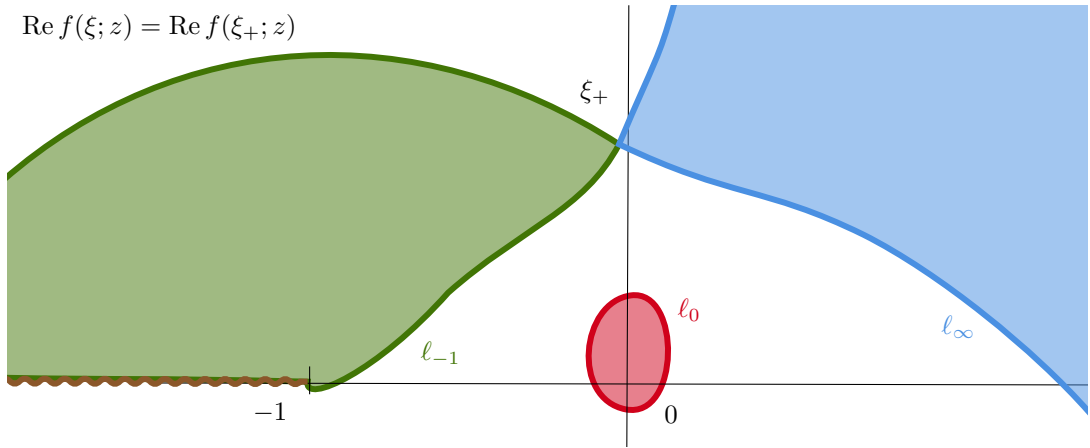

B) \textbf{The case $a<\Re z$:} When $a<\Re z$, one infers from~\eqref{eq:re_f_expanded} the limit relations
\[
 \lim_{\xi\to0} \Re f(\xi;z)=\lim_{\Re\xi\to\infty} \Re f(\xi;z)=-\infty
\]
and
\[
 \lim_{\xi\to-1} \Re f(\xi;z)=\lim_{\Re\xi\to-\infty} \Re f(\xi;z)=\infty.
\]
Therefore, as $c\to-\infty$, the level curves given by the equation~\eqref{eq:re f_eq_c} consist of just two components $\ell_{0}$ and $\ell_{\infty}$, see Figure~\ref{fig:conf-c<<0_case2_charlier}.
\begin{figure}[htb]
\begin{center}
\begin{tikzpicture}[x=0.75pt,y=0.75pt,yscale=-1,xscale=1]
\clip   (32.8,6) rectangle (571.8,228);

\draw  [color={rgb, 255:red, 208; green, 2; blue, 27 }  ,draw opacity=1 ][fill={rgb, 255:red, 208; green, 2; blue, 27 }  ,fill opacity=0.5 ][line width=2.25]  (327.8,104) .. controls (332.8,50) and (415.8,66) .. (420.8,104) .. controls (425.8,142) and (322.8,158) .. (327.8,104) -- cycle ;
\draw    (110.8,103) -- (460.8,103) ;
\draw [color={rgb, 255:red, 139; green, 87; blue, 42 }  ,draw opacity=1 ][line width=2.25]    (27.8,103) .. controls (29.47,101.33) and (31.13,101.33) .. (32.8,103) .. controls (34.47,104.67) and (36.13,104.67) .. (37.8,103) .. controls (39.47,101.33) and (41.13,101.33) .. (42.8,103) .. controls (44.47,104.67) and (46.13,104.67) .. (47.8,103) .. controls (49.47,101.33) and (51.13,101.33) .. (52.8,103) .. controls (54.47,104.67) and (56.13,104.67) .. (57.8,103) .. controls (59.47,101.33) and (61.13,101.33) .. (62.8,103) .. controls (64.47,104.67) and (66.13,104.67) .. (67.8,103) .. controls (69.47,101.33) and (71.13,101.33) .. (72.8,103) .. controls (74.47,104.67) and (76.13,104.67) .. (77.8,103) .. controls (79.47,101.33) and (81.13,101.33) .. (82.8,103) .. controls (84.47,104.67) and (86.13,104.67) .. (87.8,103) .. controls (89.47,101.33) and (91.13,101.33) .. (92.8,103) .. controls (94.47,104.67) and (96.13,104.67) .. (97.8,103) .. controls (99.47,101.33) and (101.13,101.33) .. (102.8,103) .. controls (104.47,104.67) and (106.13,104.67) .. (107.8,103) .. controls (109.47,101.33) and (111.13,101.33) .. (112.8,103) .. controls (114.47,104.67) and (116.13,104.67) .. (117.8,103) .. controls (119.47,101.33) and (121.13,101.33) .. (122.8,103) .. controls (124.47,104.67) and (126.13,104.67) .. (127.8,103) .. controls (129.47,101.33) and (131.13,101.33) .. (132.8,103) .. controls (134.47,104.67) and (136.13,104.67) .. (137.8,103) .. controls (139.47,101.33) and (141.13,101.33) .. (142.8,103) .. controls (144.47,104.67) and (146.13,104.67) .. (147.8,103) .. controls (149.47,101.33) and (151.13,101.33) .. (152.8,103) .. controls (154.47,104.67) and (156.13,104.67) .. (157.8,103) .. controls (159.47,101.33) and (161.13,101.33) .. (162.8,103) .. controls (164.47,104.67) and (166.13,104.67) .. (167.8,103) .. controls (169.47,101.33) and (171.13,101.33) .. (172.8,103) .. controls (174.47,104.67) and (176.13,104.67) .. (177.8,103) .. controls (179.47,101.33) and (181.13,101.33) .. (182.8,103) .. controls (184.47,104.67) and (186.13,104.67) .. (187.8,103) .. controls (189.47,101.33) and (191.13,101.33) .. (192.8,103) .. controls (194.47,104.67) and (196.13,104.67) .. (197.8,103) -- (198.8,103) -- (198.8,103) ;
\draw    (360.8,12) -- (359.8,227) ;
\draw    (199.8,96) -- (199.8,110) ;
\draw  [color={rgb, 255:red, 74; green, 144; blue, 226 }  ,draw opacity=1 ][fill={rgb, 255:red, 74; green, 144; blue, 226 }  ,fill opacity=0.51 ][line width=2.25]  (534.8,2) .. controls (558.8,-23) and (626.8,-35) .. (658.8,0) .. controls (659.8,43) and (660.8,190) .. (659.8,232) .. controls (645.8,279) and (548.8,260) .. (525.8,233) .. controls (511.8,194) and (483.8,161) .. (478.8,118) .. controls (477.8,81) and (519.8,46) .. (534.8,2) -- cycle ;

\draw (186,115.4) node [anchor=north west][inner sep=0.75pt]    {$-1$};
\draw (342,111.4) node [anchor=north west][inner sep=0.75pt]    {$0$};
\draw (394,142.4) node [anchor=north west][inner sep=0.75pt]  [color={rgb, 255:red, 208; green, 2; blue, 27 }  ,opacity=1 ]  {$\ell _{0}$};
\draw (51,20.4) node [anchor=north west][inner sep=0.75pt]    {$\Re f( \xi;z ) =c\sim -\infty $};
\draw (460,149.4) node [anchor=north west][inner sep=0.75pt]  [color={rgb, 255:red, 74; green, 144; blue, 226 }  ,opacity=1 ]  {$\ell _{\infty }$};
\end{tikzpicture}
\end{center}
\caption{An illustration of the topological configuration of the level curves~$\ell_{0}$ and $\ell_{\infty}$ when $c$ approaches $-\infty$ and $\Re z>a$. The filled regions denotes the areas where the value of $\Re f(\xi;z)$ is less than the level~$c$ of the level curves.} \label{fig:conf-c<<0_case2_charlier}
\end{figure}
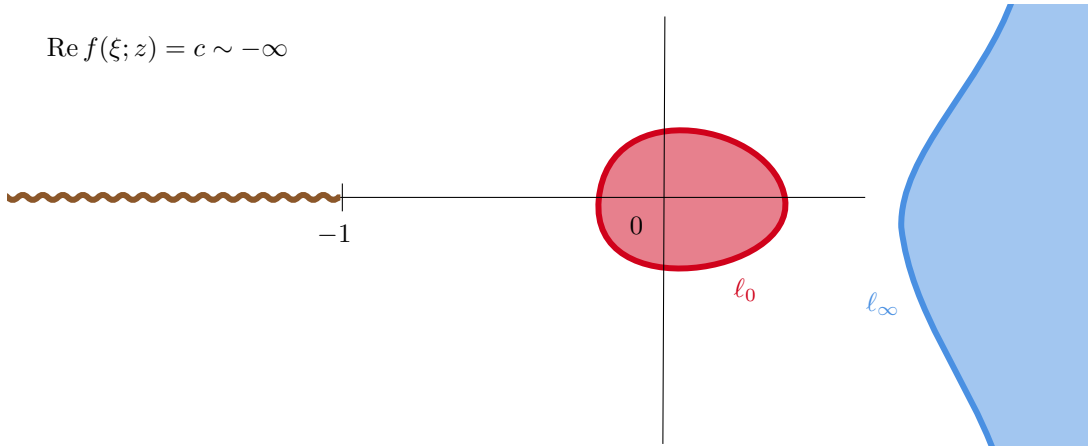
As $c\to\infty$, we have to end up with the two components $\ell_{-1}$ and $\ell_{-\infty}$. There are again three possible scenarios how the transitions can proceed. Either the two initial components $\ell_{0}$ and $\ell_{\infty}$ merge at~$\xi_{+}$ and then splits at~$\xi_{-}$ to form $\ell_{-1}$ and $\ell_{-\infty}$. Or one of the initials $\ell_{0}$, $\ell_{\infty}$ splits at~$\xi_{+}$ to form $\ell_{-1}$ as a product and the remaining components merge at~$\xi_{-}$ to form~$\ell_{-\infty}$. We will again discuss all three possible scenarios.

\textbf{a) $\ell_{0}$ and $\ell_{\infty}$ merge at $\xi_{+}$:} The situation is depicted in Figure~\ref{fig:conf_ell0_ellinf_merge-case2-parab}. It allows to apply the saddle point method in a~straightforward fashion resulting again in the formula~\eqref{eq:ratio_I_n_asympt}.

\begin{figure}[htb]
\begin{center}
\begin{tikzpicture}[x=0.75pt,y=0.75pt,yscale=-1,xscale=1]
\clip   (32.8,6) rectangle (571.8,228) ;
\draw  [color={rgb, 255:red, 208; green, 2; blue, 27 }  ,draw opacity=1 ][fill={rgb, 255:red, 208; green, 2; blue, 27 }  ,fill opacity=0.5 ][line width=2.25]  (338.8,102) .. controls (341.27,54.2) and (394.6,51.53) .. (446.6,82.2) .. controls (411.93,142.87) and (337.93,145.53) .. (338.8,102) -- cycle ;
\draw    (110.8,103) -- (587.8,104) ;
\draw [color={rgb, 255:red, 139; green, 87; blue, 42 }  ,draw opacity=1 ][line width=2.25]    (21.8,103) .. controls (23.47,101.33) and (25.13,101.33) .. (26.8,103) .. controls (28.47,104.67) and (30.13,104.67) .. (31.8,103) .. controls (33.47,101.33) and (35.13,101.33) .. (36.8,103) .. controls (38.47,104.67) and (40.13,104.67) .. (41.8,103) .. controls (43.47,101.33) and (45.13,101.33) .. (46.8,103) .. controls (48.47,104.67) and (50.13,104.67) .. (51.8,103) .. controls (53.47,101.33) and (55.13,101.33) .. (56.8,103) .. controls (58.47,104.67) and (60.13,104.67) .. (61.8,103) .. controls (63.47,101.33) and (65.13,101.33) .. (66.8,103) .. controls (68.47,104.67) and (70.13,104.67) .. (71.8,103) .. controls (73.47,101.33) and (75.13,101.33) .. (76.8,103) .. controls (78.47,104.67) and (80.13,104.67) .. (81.8,103) .. controls (83.47,101.33) and (85.13,101.33) .. (86.8,103) .. controls (88.47,104.67) and (90.13,104.67) .. (91.8,103) .. controls (93.47,101.33) and (95.13,101.33) .. (96.8,103) .. controls (98.47,104.67) and (100.13,104.67) .. (101.8,103) .. controls (103.47,101.33) and (105.13,101.33) .. (106.8,103) .. controls (108.47,104.67) and (110.13,104.67) .. (111.8,103) .. controls (113.47,101.33) and (115.13,101.33) .. (116.8,103) .. controls (118.47,104.67) and (120.13,104.67) .. (121.8,103) .. controls (123.47,101.33) and (125.13,101.33) .. (126.8,103) .. controls (128.47,104.67) and (130.13,104.67) .. (131.8,103) .. controls (133.47,101.33) and (135.13,101.33) .. (136.8,103) .. controls (138.47,104.67) and (140.13,104.67) .. (141.8,103) .. controls (143.47,101.33) and (145.13,101.33) .. (146.8,103) .. controls (148.47,104.67) and (150.13,104.67) .. (151.8,103) .. controls (153.47,101.33) and (155.13,101.33) .. (156.8,103) .. controls (158.47,104.67) and (160.13,104.67) .. (161.8,103) .. controls (163.47,101.33) and (165.13,101.33) .. (166.8,103) .. controls (168.47,104.67) and (170.13,104.67) .. (171.8,103) .. controls (173.47,101.33) and (175.13,101.33) .. (176.8,103) .. controls (178.47,104.67) and (180.13,104.67) .. (181.8,103) .. controls (183.47,101.33) and (185.13,101.33) .. (186.8,103) .. controls (188.47,104.67) and (190.13,104.67) .. (191.8,103) .. controls (193.47,101.33) and (195.13,101.33) .. (196.8,103) -- (198.8,103) -- (198.8,103) ;
\draw    (360.8,4) -- (359.8,227) ;
\draw    (199.8,96) -- (199.8,110) ;
\draw  [color={rgb, 255:red, 74; green, 144; blue, 226 }  ,draw opacity=1 ][fill={rgb, 255:red, 74; green, 144; blue, 226 }  ,fill opacity=0.51 ][line width=2.25]  (481.27,3.53) .. controls (505.27,-21.47) and (626.13,-35) .. (658.13,0) .. controls (659.13,43) and (660.13,190) .. (659.13,232) .. controls (645.13,279) and (580.27,254.53) .. (557.27,227.53) .. controls (535.93,188.2) and (506.6,114.87) .. (446.6,82.2) .. controls (467.93,44.87) and (471.27,38.2) .. (481.27,3.53) -- cycle ;

\draw  [color={rgb, 255:red, 144; green, 19; blue, 254 }  ,draw opacity=1 ][dash pattern={on 5.63pt off 4.5pt}][line width=1.5]  (298.8,121) .. controls (315.8,200) and (496.6,185.53) .. (446.6,82.2) .. controls (396.6,-21.13) and (281.8,42) .. (298.8,121) -- cycle ;

\draw (186,115.4) node [anchor=north west][inner sep=0.75pt]    {$-1$};
\draw (366,113.4) node [anchor=north west][inner sep=0.75pt]    {$0$};
\draw (393.33,134.4) node [anchor=north west][inner sep=0.75pt]  [color={rgb, 255:red, 208; green, 2; blue, 27 }  ,opacity=1 ]  {$\ell _{0}$};
\draw (48,26.4) node [anchor=north west][inner sep=0.75pt]    {$\Re f( \xi;z) =\Re f( \xi _{+};z)$};
\draw (488.33,149.4) node [anchor=north west][inner sep=0.75pt]  [color={rgb, 255:red, 74; green, 144; blue, 226 }  ,opacity=1 ]  {$\ell _{\infty }$};
\draw (435,42.73) node [anchor=north west][inner sep=0.75pt]  [color={rgb, 255:red, 0; green, 0; blue, 0 }  ,opacity=1 ]  {$\xi _{+}$};
\draw (302.67,30.73) node [anchor=north west][inner sep=0.75pt]  [color={rgb, 255:red, 65; green, 117; blue, 5 }  ,opacity=1 ]  {$\textcolor[rgb]{0.56,0.07,1}{\gamma }$};
\end{tikzpicture}
\end{center}
\caption{An illustration of the topological configuration when $\ell_{0}$ and $\ell_{\infty}$ merge at $\xi_{+}$. The Jordan curve $\gamma$ fulfills the assumption~(v) of Theorem~\ref{thm:saddle-point}.} \label{fig:conf_ell0_ellinf_merge-case2-parab}
\end{figure}
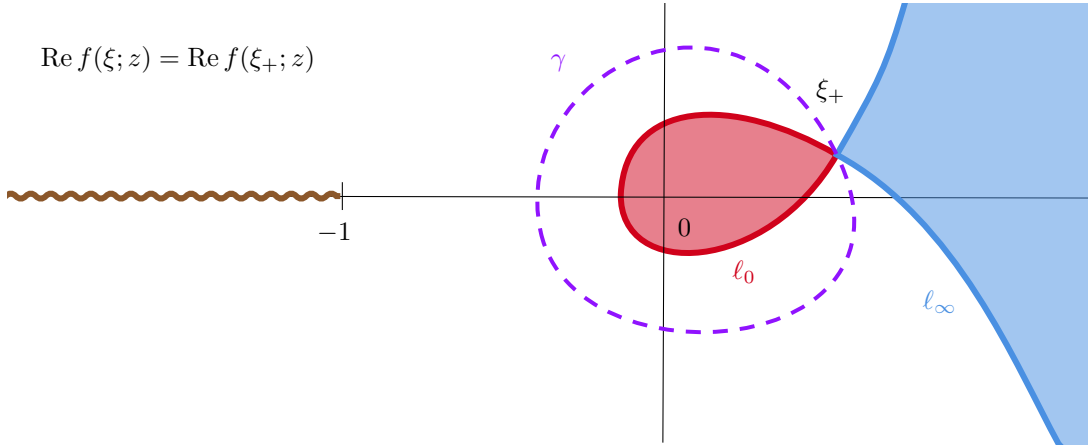

\textbf{b) $\ell_{\infty}$ splits at $\xi_{+}$:}
This configuration does not occur. Otherwise, one gets a contradiction with Lemma~\ref{lem:crossings_imag_line_charlier} since the level curves would have to cross the imaginary line more than twice, see Figure~\ref{fig:conf_ellinf_split_charlier}.
\begin{figure}[htb]
\begin{center}
\begin{tikzpicture}[x=0.75pt,y=0.75pt,yscale=-1,xscale=1]
\clip   (32.8,6) rectangle (571.8,228);
\draw  [color={rgb, 255:red, 74; green, 144; blue, 226 }  ,draw opacity=1 ][fill={rgb, 255:red, 74; green, 144; blue, 226 }  ,fill opacity=0.51 ][line width=2.25]  (288.7,50.75) .. controls (278.73,117) and (258.7,140.08) .. (272.7,176.75) .. controls (210.7,174.75) and (159.7,176.75) .. (61.2,175.25) .. controls (66.2,137.25) and (178.2,32.75) .. (288.7,50.75) -- cycle ;
\draw  [color={rgb, 255:red, 208; green, 2; blue, 27 }  ,draw opacity=1 ][fill={rgb, 255:red, 208; green, 2; blue, 27 }  ,fill opacity=0.5 ][line width=2.25]  (348.7,178.75) .. controls (336.7,156.25) and (377.7,126.25) .. (387.7,150.75) .. controls (397.7,175.25) and (360.7,201.25) .. (348.7,178.75) -- cycle ;
\draw    (98.8,176.33) -- (575.8,177.33) ;
\draw [color={rgb, 255:red, 139; green, 87; blue, 42 }  ,draw opacity=1 ][line width=2.25]    (31.2,175.75) .. controls (32.87,174.09) and (34.54,174.1) .. (36.2,175.77) .. controls (37.86,177.44) and (39.53,177.45) .. (41.2,175.8) .. controls (42.87,174.14) and (44.54,174.15) .. (46.2,175.82) .. controls (47.86,177.49) and (49.53,177.5) .. (51.2,175.84) .. controls (52.87,174.19) and (54.54,174.2) .. (56.2,175.87) .. controls (57.86,177.54) and (59.53,177.55) .. (61.2,175.89) .. controls (62.87,174.23) and (64.54,174.24) .. (66.2,175.91) .. controls (67.86,177.58) and (69.53,177.59) .. (71.2,175.94) .. controls (72.87,174.28) and (74.54,174.29) .. (76.2,175.96) .. controls (77.86,177.63) and (79.53,177.64) .. (81.2,175.98) .. controls (82.87,174.33) and (84.54,174.34) .. (86.2,176.01) .. controls (87.86,177.68) and (89.53,177.69) .. (91.2,176.03) .. controls (92.87,174.37) and (94.54,174.38) .. (96.2,176.05) .. controls (97.86,177.72) and (99.53,177.73) .. (101.2,176.08) .. controls (102.87,174.42) and (104.54,174.43) .. (106.2,176.1) .. controls (107.86,177.77) and (109.53,177.78) .. (111.2,176.12) .. controls (112.87,174.47) and (114.54,174.48) .. (116.2,176.15) .. controls (117.86,177.82) and (119.53,177.83) .. (121.2,176.17) .. controls (122.87,174.51) and (124.54,174.52) .. (126.2,176.19) .. controls (127.86,177.86) and (129.53,177.87) .. (131.2,176.22) .. controls (132.87,174.56) and (134.54,174.57) .. (136.2,176.24) .. controls (137.86,177.91) and (139.53,177.92) .. (141.2,176.26) .. controls (142.87,174.61) and (144.54,174.62) .. (146.2,176.29) .. controls (147.86,177.96) and (149.53,177.97) .. (151.2,176.31) .. controls (152.87,174.65) and (154.54,174.66) .. (156.2,176.33) .. controls (157.86,178) and (159.53,178.01) .. (161.2,176.36) .. controls (162.87,174.7) and (164.54,174.71) .. (166.2,176.38) .. controls (167.86,178.05) and (169.53,178.06) .. (171.2,176.4) .. controls (172.87,174.75) and (174.54,174.76) .. (176.2,176.43) .. controls (177.86,178.1) and (179.53,178.11) .. (181.2,176.45) .. controls (182.87,174.79) and (184.54,174.8) .. (186.2,176.47) .. controls (187.86,178.14) and (189.53,178.15) .. (191.2,176.5) .. controls (192.87,174.84) and (194.54,174.85) .. (196.2,176.52) .. controls (197.86,178.19) and (199.53,178.2) .. (201.2,176.54) .. controls (202.87,174.89) and (204.54,174.9) .. (206.2,176.57) .. controls (207.86,178.24) and (209.53,178.25) .. (211.2,176.59) .. controls (212.87,174.93) and (214.54,174.94) .. (216.2,176.61) .. controls (217.86,178.28) and (219.53,178.29) .. (221.2,176.64) .. controls (222.87,174.98) and (224.54,174.99) .. (226.2,176.66) .. controls (227.86,178.33) and (229.53,178.34) .. (231.2,176.68) .. controls (232.87,175.03) and (234.54,175.04) .. (236.2,176.71) .. controls (237.86,178.38) and (239.53,178.39) .. (241.2,176.73) .. controls (242.87,175.07) and (244.54,175.08) .. (246.2,176.75) .. controls (247.86,178.42) and (249.53,178.43) .. (251.2,176.78) .. controls (252.87,175.12) and (254.54,175.13) .. (256.2,176.8) .. controls (257.86,178.47) and (259.53,178.48) .. (261.2,176.82) .. controls (262.87,175.17) and (264.54,175.18) .. (266.2,176.85) .. controls (267.86,178.52) and (269.53,178.53) .. (271.2,176.87) .. controls (272.87,175.21) and (274.54,175.22) .. (276.2,176.89) .. controls (277.86,178.56) and (279.53,178.57) .. (281.2,176.92) .. controls (282.87,175.26) and (284.54,175.27) .. (286.2,176.94) .. controls (287.86,178.61) and (289.53,178.62) .. (291.2,176.96) .. controls (292.87,175.31) and (294.54,175.32) .. (296.2,176.99) -- (298.8,177) -- (298.8,177) ;
\draw    (360.8,4) -- (359.8,227) ;
\draw    (299.3,170) -- (299.3,184) ;
\draw  [color={rgb, 255:red, 74; green, 144; blue, 226 }  ,draw opacity=1 ][fill={rgb, 255:red, 74; green, 144; blue, 226 }  ,fill opacity=0.51 ][line width=2.25]  (294.8,2.35) .. controls (328.3,-6.65) and (593.3,-28.15) .. (625.3,6.85) .. controls (626.3,49.85) and (615.8,191.35) .. (574.8,227.35) .. controls (503.8,127.65) and (418.3,117.65) .. (397.8,114.15) .. controls (380.3,109.15) and (371.3,67.15) .. (288.7,50.75) .. controls (292.8,14.35) and (290.8,31.35) .. (294.8,2.35) -- cycle ;

\draw (288.5,191.4) node [anchor=north west][inner sep=0.75pt]    {$-1$};
\draw (375.5,194.4) node [anchor=north west][inner sep=0.75pt]    {$0$};
\draw (393.33,134.4) node [anchor=north west][inner sep=0.75pt]  [color={rgb, 255:red, 208; green, 2; blue, 27 }  ,opacity=1 ]  {$\ell _{0}$};
\draw (48,26.4) node [anchor=north west][inner sep=0.75pt]    {$\Re f(\xi;z) =\Re f(\xi _{+};z)$};
\draw (319,74.07) node [anchor=north west][inner sep=0.75pt]  [color={rgb, 255:red, 74; green, 144; blue, 226 }  ,opacity=1 ]  {$\ell _{\infty }$};
\draw (261.67,23.4) node [anchor=north west][inner sep=0.75pt]  [color={rgb, 255:red, 0; green, 0; blue, 0 }  ,opacity=1 ]  {$\xi _{+}$};
\end{tikzpicture}
\end{center}
\caption{An illustration of the topological configuration when $\ell_{\infty}$ splits at $\xi_{+}$.} \label{fig:conf_ellinf_split_charlier}
\end{figure}
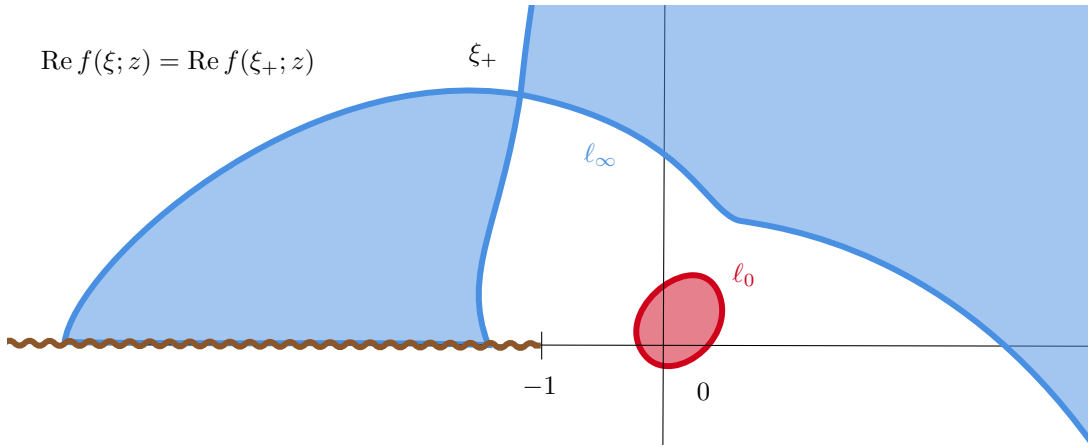

\textbf{c) $\ell_{0}$ splits at $\xi_{+}$:} The last configuration requires a more subtle analysis since a behavior of the level curves in a neighborhood of the cut $(-\infty,-1)$ has to be taken into account. We divide the final discussion into 3 parts.

\textbf{c1) No interference from the branch cut:} If the curve $l_0$ splits at $\xi_+$ so that no interference from the branch cut $(-\infty,-1)$ occurs, see Figure~\ref{fig:conf_ell0_split_charlier}, the saddle point method is applicable resulting in~\eqref{eq:ratio_I_n_asympt} again.
\begin{figure}[htb]
\begin{center}
\begin{tikzpicture}[x=0.75pt,y=0.75pt,yscale=-1,xscale=1]

\clip   (32.8,6) rectangle (571.8,228) ;
\draw  [color={rgb, 255:red, 208; green, 2; blue, 27 }  ,draw opacity=1 ][fill={rgb, 255:red, 208; green, 2; blue, 27 }  ,fill opacity=0.5 ][line width=2.25]  (294.2,83.65) .. controls (210.7,46.15) and (150.7,92.15) .. (101.2,147.65) .. controls (154.7,147.15) and (256.7,147.65) .. (293.7,150.15) .. controls (279.7,135.65) and (276.7,124.15) .. (294.2,83.65) -- cycle ;
\draw  [color={rgb, 255:red, 208; green, 2; blue, 27 }  ,draw opacity=1 ][fill={rgb, 255:red, 208; green, 2; blue, 27 }  ,fill opacity=0.5 ][line width=2.25]  (410.7,107.65) .. controls (412.7,46.65) and (341.2,24.65) .. (294.2,83.65) .. controls (334.7,110.15) and (319.2,161.65) .. (345.2,170.15) .. controls (382.7,178.15) and (410.7,140.15) .. (410.7,107.65) -- cycle ;
\draw    (122.8,148.5) -- (599.8,149.5) ;
\draw [color={rgb, 255:red, 139; green, 87; blue, 42 }  ,draw opacity=1 ][line width=2.25]    (29.7,147.65) .. controls (31.37,145.99) and (33.04,146) .. (34.7,147.67) .. controls (36.37,149.34) and (38.03,149.34) .. (39.7,147.68) .. controls (41.37,146.02) and (43.04,146.03) .. (44.7,147.7) .. controls (46.37,149.37) and (48.03,149.37) .. (49.7,147.71) .. controls (51.37,146.05) and (53.04,146.06) .. (54.7,147.73) .. controls (56.37,149.4) and (58.03,149.4) .. (59.7,147.74) .. controls (61.37,146.08) and (63.04,146.09) .. (64.7,147.76) .. controls (66.37,149.43) and (68.03,149.43) .. (69.7,147.77) .. controls (71.37,146.11) and (73.04,146.12) .. (74.7,147.79) .. controls (76.37,149.46) and (78.03,149.46) .. (79.7,147.8) .. controls (81.37,146.14) and (83.04,146.15) .. (84.7,147.82) .. controls (86.36,149.49) and (88.03,149.5) .. (89.7,147.84) .. controls (91.37,146.18) and (93.03,146.18) .. (94.7,147.85) .. controls (96.36,149.52) and (98.03,149.53) .. (99.7,147.87) .. controls (101.37,146.21) and (103.03,146.21) .. (104.7,147.88) .. controls (106.36,149.55) and (108.03,149.56) .. (109.7,147.9) .. controls (111.37,146.24) and (113.03,146.24) .. (114.7,147.91) .. controls (116.36,149.58) and (118.03,149.59) .. (119.7,147.93) .. controls (121.37,146.27) and (123.03,146.27) .. (124.7,147.94) .. controls (126.36,149.61) and (128.03,149.62) .. (129.7,147.96) .. controls (131.37,146.3) and (133.04,146.31) .. (134.7,147.98) .. controls (136.37,149.65) and (138.03,149.65) .. (139.7,147.99) .. controls (141.37,146.33) and (143.04,146.34) .. (144.7,148.01) .. controls (146.37,149.68) and (148.03,149.68) .. (149.7,148.02) .. controls (151.37,146.36) and (153.04,146.37) .. (154.7,148.04) .. controls (156.37,149.71) and (158.03,149.71) .. (159.7,148.05) .. controls (161.37,146.39) and (163.04,146.4) .. (164.7,148.07) .. controls (166.37,149.74) and (168.03,149.74) .. (169.7,148.08) .. controls (171.37,146.42) and (173.04,146.43) .. (174.7,148.1) .. controls (176.37,149.77) and (178.03,149.77) .. (179.7,148.11) .. controls (181.37,146.45) and (183.04,146.46) .. (184.7,148.13) .. controls (186.36,149.8) and (188.03,149.81) .. (189.7,148.15) .. controls (191.37,146.49) and (193.03,146.49) .. (194.7,148.16) .. controls (196.36,149.83) and (198.03,149.84) .. (199.7,148.18) .. controls (201.37,146.52) and (203.03,146.52) .. (204.7,148.19) .. controls (206.36,149.86) and (208.03,149.87) .. (209.7,148.21) .. controls (211.37,146.55) and (213.03,146.55) .. (214.7,148.22) .. controls (216.36,149.89) and (218.03,149.9) .. (219.7,148.24) .. controls (221.37,146.58) and (223.03,146.58) .. (224.7,148.25) .. controls (226.36,149.92) and (228.03,149.93) .. (229.7,148.27) .. controls (231.37,146.61) and (233.03,146.61) .. (234.7,148.28) .. controls (236.36,149.95) and (238.03,149.96) .. (239.7,148.3) .. controls (241.37,146.64) and (243.04,146.65) .. (244.7,148.32) .. controls (246.37,149.99) and (248.03,149.99) .. (249.7,148.33) .. controls (251.37,146.67) and (253.04,146.68) .. (254.7,148.35) .. controls (256.37,150.02) and (258.03,150.02) .. (259.7,148.36) .. controls (261.37,146.7) and (263.04,146.71) .. (264.7,148.38) .. controls (266.37,150.05) and (268.03,150.05) .. (269.7,148.39) .. controls (271.37,146.73) and (273.04,146.74) .. (274.7,148.41) .. controls (276.37,150.08) and (278.03,150.08) .. (279.7,148.42) .. controls (281.37,146.76) and (283.04,146.77) .. (284.7,148.44) .. controls (286.37,150.11) and (288.03,150.11) .. (289.7,148.45) .. controls (291.37,146.79) and (293.04,146.8) .. (294.7,148.47) .. controls (296.36,150.14) and (298.03,150.15) .. (299.7,148.49) -- (304.3,148.5) -- (304.3,148.5) ;
\draw    (344.3,4.5) -- (343.3,227.5) ;
\draw    (306.3,141.5) -- (306.3,155.5) ;
\draw  [color={rgb, 255:red, 144; green, 19; blue, 254 }  ,draw opacity=1 ][dash pattern={on 5.63pt off 4.5pt}][line width=1.5]  (294.2,83.65) .. controls (291.2,183.15) and (396.2,212.15) .. (439.2,133.15) .. controls (482.2,54.15) and (297.2,-15.85) .. (294.2,83.65) -- cycle ;
\draw  [color={rgb, 255:red, 74; green, 144; blue, 226 }  ,draw opacity=1 ][fill={rgb, 255:red, 74; green, 144; blue, 226 }  ,fill opacity=0.51 ][line width=2.25]  (527,-9) .. controls (551,-34) and (619,-46) .. (651,-11) .. controls (652,32) and (653,179) .. (652,221) .. controls (638,268) and (541,249) .. (518,222) .. controls (504,183) and (476,150) .. (471,107) .. controls (470,70) and (512,35) .. (527,-9) -- cycle ;

\draw (284,160.4) node [anchor=north west][inner sep=0.75pt]    {$-1$};
\draw (352,151.9) node [anchor=north west][inner sep=0.75pt]    {$0$};
\draw (417.33,95.9) node [anchor=north west][inner sep=0.75pt]  [color={rgb, 255:red, 208; green, 2; blue, 27 }  ,opacity=1 ]  {$\ell _{0}$};
\draw (48,26.4) node [anchor=north west][inner sep=0.75pt]    {$\Re f(\xi;z) =\Re f(\xi _{+};z)$};
\draw (253,42.23) node [anchor=north west][inner sep=0.75pt]  [color={rgb, 255:red, 0; green, 0; blue, 0 }  ,opacity=1 ]  {$\xi _{+}$};
\draw (296.17,20.23) node [anchor=north west][inner sep=0.75pt]  [color={rgb, 255:red, 65; green, 117; blue, 5 }  ,opacity=1 ]  {$\textcolor[rgb]{0.56,0.07,1}{\gamma }$};
\draw (466,169.4) node [anchor=north west][inner sep=0.75pt]  [color={rgb, 255:red, 74; green, 144; blue, 226 }  ,opacity=1 ]  {$\ell _{\infty }$};
\end{tikzpicture}
\end{center}
\caption{An illustration of the topological configuration when $\ell_{0}$ splits at $\xi_{+}$ without any interference from the branch cut. The Jordan curve $\gamma$ fulfills the assumption~(v) of Theorem~\ref{thm:saddle-point}.}\label{fig:conf_ell0_split_charlier}
\end{figure}
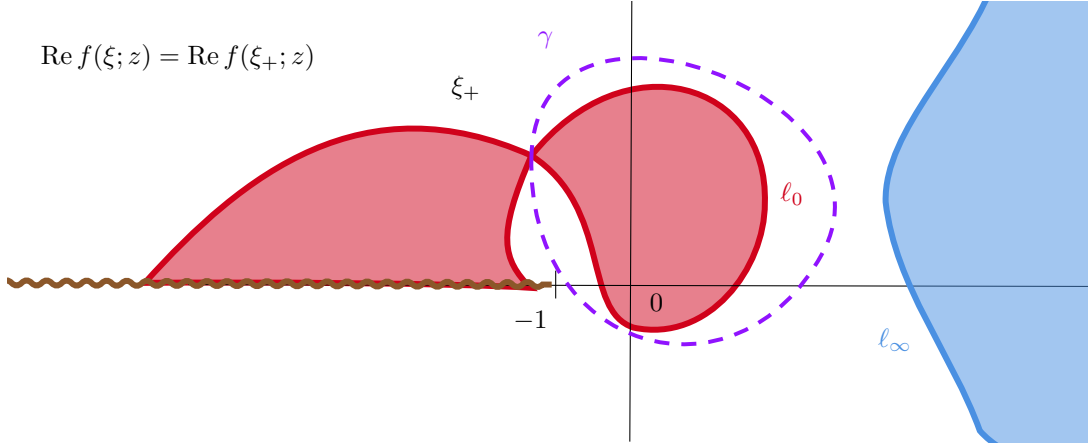

\textbf{c2) Same side interference:} Suppose an interference from the branch cut $(-\infty,-1)$ does occur. If it happens from ``the same side'', as illustrated in Figure~\ref{fig:conf_ell0_split_same_side_charlier}, one gets a contradiction with Lemma~\ref{lem:crossings_the_cut_charlier} according to which the level curves can approach the cut $(-\infty,-1)$ at most at two points from above and from below, too. Thus, the considered configuration is impossible. 
\begin{figure}[htb]
\begin{center}
\begin{tikzpicture}[x=0.75pt,y=0.75pt,yscale=-1,xscale=1]

\clip   (32.8,6) rectangle (571.8,228) ;
\draw  [color={rgb, 255:red, 208; green, 2; blue, 27 }  ,draw opacity=1 ][fill={rgb, 255:red, 208; green, 2; blue, 27 }  ,fill opacity=0.5 ][line width=2.25]  (262.36,77.88) .. controls (220.61,59.13) and (115.56,61.08) .. (101.2,147.65) .. controls (154.7,147.15) and (192.96,145.78) .. (229.96,148.28) .. controls (213.16,127.48) and (251.96,107.08) .. (262.36,77.88) -- cycle ;
\draw  [color={rgb, 255:red, 208; green, 2; blue, 27 }  ,draw opacity=1 ][fill={rgb, 255:red, 208; green, 2; blue, 27 }  ,fill opacity=0.5 ][line width=2.25]  (410.7,107.65) .. controls (412.7,46.65) and (290.76,15.08) .. (262.36,77.88) .. controls (290.36,85.08) and (258.76,132.28) .. (259.16,148.28) .. controls (267.16,149.08) and (276.76,149.08) .. (281.56,148.68) .. controls (289.56,85.88) and (332.2,165.9) .. (345.2,170.15) .. controls (362.36,178.68) and (410.7,140.15) .. (410.7,107.65) -- cycle ;
\draw    (122.8,148.5) -- (599.8,149.5) ;
\draw [color={rgb, 255:red, 139; green, 87; blue, 42 }  ,draw opacity=1 ][line width=2.25]    (29.7,147.65) .. controls (31.37,145.99) and (33.04,146) .. (34.7,147.67) .. controls (36.37,149.34) and (38.03,149.34) .. (39.7,147.68) .. controls (41.37,146.02) and (43.04,146.03) .. (44.7,147.7) .. controls (46.37,149.37) and (48.03,149.37) .. (49.7,147.71) .. controls (51.37,146.05) and (53.04,146.06) .. (54.7,147.73) .. controls (56.37,149.4) and (58.03,149.4) .. (59.7,147.74) .. controls (61.37,146.08) and (63.04,146.09) .. (64.7,147.76) .. controls (66.37,149.43) and (68.03,149.43) .. (69.7,147.77) .. controls (71.37,146.11) and (73.04,146.12) .. (74.7,147.79) .. controls (76.37,149.46) and (78.03,149.46) .. (79.7,147.8) .. controls (81.37,146.14) and (83.04,146.15) .. (84.7,147.82) .. controls (86.36,149.49) and (88.03,149.5) .. (89.7,147.84) .. controls (91.37,146.18) and (93.03,146.18) .. (94.7,147.85) .. controls (96.36,149.52) and (98.03,149.53) .. (99.7,147.87) .. controls (101.37,146.21) and (103.03,146.21) .. (104.7,147.88) .. controls (106.36,149.55) and (108.03,149.56) .. (109.7,147.9) .. controls (111.37,146.24) and (113.03,146.24) .. (114.7,147.91) .. controls (116.36,149.58) and (118.03,149.59) .. (119.7,147.93) .. controls (121.37,146.27) and (123.03,146.27) .. (124.7,147.94) .. controls (126.36,149.61) and (128.03,149.62) .. (129.7,147.96) .. controls (131.37,146.3) and (133.04,146.31) .. (134.7,147.98) .. controls (136.37,149.65) and (138.03,149.65) .. (139.7,147.99) .. controls (141.37,146.33) and (143.04,146.34) .. (144.7,148.01) .. controls (146.37,149.68) and (148.03,149.68) .. (149.7,148.02) .. controls (151.37,146.36) and (153.04,146.37) .. (154.7,148.04) .. controls (156.37,149.71) and (158.03,149.71) .. (159.7,148.05) .. controls (161.37,146.39) and (163.04,146.4) .. (164.7,148.07) .. controls (166.37,149.74) and (168.03,149.74) .. (169.7,148.08) .. controls (171.37,146.42) and (173.04,146.43) .. (174.7,148.1) .. controls (176.37,149.77) and (178.03,149.77) .. (179.7,148.11) .. controls (181.37,146.45) and (183.04,146.46) .. (184.7,148.13) .. controls (186.36,149.8) and (188.03,149.81) .. (189.7,148.15) .. controls (191.37,146.49) and (193.03,146.49) .. (194.7,148.16) .. controls (196.36,149.83) and (198.03,149.84) .. (199.7,148.18) .. controls (201.37,146.52) and (203.03,146.52) .. (204.7,148.19) .. controls (206.36,149.86) and (208.03,149.87) .. (209.7,148.21) .. controls (211.37,146.55) and (213.03,146.55) .. (214.7,148.22) .. controls (216.36,149.89) and (218.03,149.9) .. (219.7,148.24) .. controls (221.37,146.58) and (223.03,146.58) .. (224.7,148.25) .. controls (226.36,149.92) and (228.03,149.93) .. (229.7,148.27) .. controls (231.37,146.61) and (233.03,146.61) .. (234.7,148.28) .. controls (236.36,149.95) and (238.03,149.96) .. (239.7,148.3) .. controls (241.37,146.64) and (243.04,146.65) .. (244.7,148.32) .. controls (246.37,149.99) and (248.03,149.99) .. (249.7,148.33) .. controls (251.37,146.67) and (253.04,146.68) .. (254.7,148.35) .. controls (256.37,150.02) and (258.03,150.02) .. (259.7,148.36) .. controls (261.37,146.7) and (263.04,146.71) .. (264.7,148.38) .. controls (266.37,150.05) and (268.03,150.05) .. (269.7,148.39) .. controls (271.37,146.73) and (273.04,146.74) .. (274.7,148.41) .. controls (276.37,150.08) and (278.03,150.08) .. (279.7,148.42) .. controls (281.37,146.76) and (283.04,146.77) .. (284.7,148.44) .. controls (286.37,150.11) and (288.03,150.11) .. (289.7,148.45) .. controls (291.37,146.79) and (293.04,146.8) .. (294.7,148.47) .. controls (296.36,150.14) and (298.03,150.15) .. (299.7,148.49) -- (304.3,148.5) -- (304.3,148.5) ;
\draw    (344.3,4.5) -- (343.3,227.5) ;
\draw    (306.3,141.5) -- (306.3,155.5) ;
\draw  [color={rgb, 255:red, 74; green, 144; blue, 226 }  ,draw opacity=1 ][fill={rgb, 255:red, 74; green, 144; blue, 226 }  ,fill opacity=0.51 ][line width=2.25]  (527,-9) .. controls (551,-34) and (619,-46) .. (651,-11) .. controls (652,32) and (653,179) .. (652,221) .. controls (638,268) and (541,249) .. (518,222) .. controls (504,183) and (476,150) .. (471,107) .. controls (470,70) and (512,35) .. (527,-9) -- cycle ;

\draw (284,160.4) node [anchor=north west][inner sep=0.75pt]    {$-1$};
\draw (352,151.9) node [anchor=north west][inner sep=0.75pt]    {$0$};
\draw (417.33,95.9) node [anchor=north west][inner sep=0.75pt]  [color={rgb, 255:red, 208; green, 2; blue, 27 }  ,opacity=1 ]  {$\ell _{0}$};
\draw (48,26.4) node [anchor=north west][inner sep=0.75pt]    {$\Re f(\xi;z) =\Re f(\xi _{+};z)$};
\draw (253,42.23) node [anchor=north west][inner sep=0.75pt]  [color={rgb, 255:red, 0; green, 0; blue, 0 }  ,opacity=1 ]  {$\xi _{+}$};
\draw (459,174.4) node [anchor=north west][inner sep=0.75pt]  [color={rgb, 255:red, 74; green, 144; blue, 226 }  ,opacity=1 ]  {$\ell _{\infty }$};
\end{tikzpicture}
\end{center}
\caption{An illustration of the topological configuration when $\ell_{0}$ splits at $\xi_{+}$ with the ''same side'' interference from the branch cut. } \label{fig:conf_ell0_split_same_side_charlier}
\end{figure}
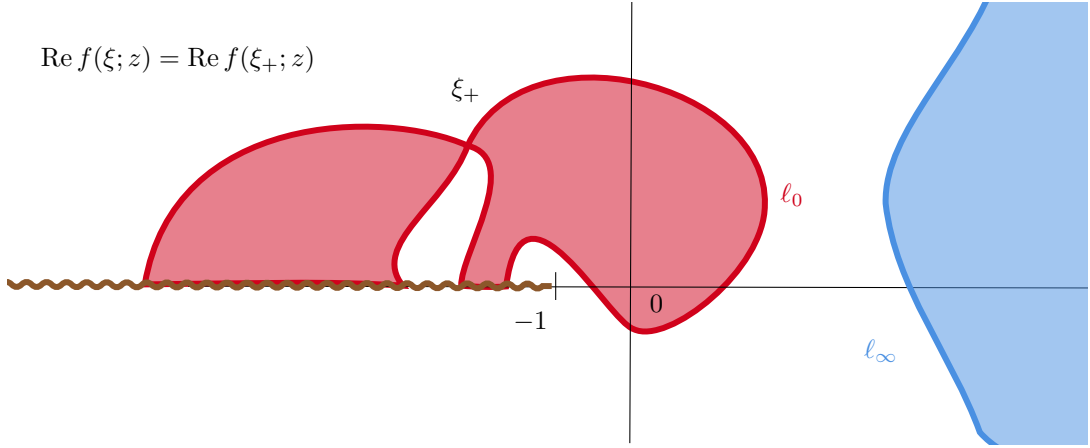

\textbf{c3) Opposite side interference:} The last configuration that is not excluded by Lemma~\ref{lem:crossings_the_cut_charlier} is na ``opposite side'' interference from the cut $(-\infty,-1)$ as shown in Figure~\ref{fig:conf_ell0_split_opposite_side_charlier}. If such a~configuration occurs, the saddle point method cannot be readily applied since no Jordan curve located in $\C\setminus((-\infty,-1]\cup\{0\})$ with $0$ in its interior fulfills the assumption~(v) of Theorem~\ref{thm:saddle-point} with the saddle point~$\xi_{+}$.
Nevertheless, we show that the formula~\eqref{eq:ratio_I_n_asympt} remains true even in this case.

\begin{figure}[htb]
\begin{center}
\begin{tikzpicture}[x=0.75pt,y=0.75pt,yscale=-1,xscale=1]

\clip   (32.8,6) rectangle (571.8,228);
\draw  [color={rgb, 255:red, 208; green, 2; blue, 27 }  ,draw opacity=1 ][fill={rgb, 255:red, 208; green, 2; blue, 27 }  ,fill opacity=0.5 ][line width=2.25]  (294.2,83.65) .. controls (210.7,46.15) and (150.7,92.15) .. (101.2,147.65) .. controls (154.7,147.15) and (256.7,147.65) .. (293.7,150.15) .. controls (279.7,135.65) and (276.7,124.15) .. (294.2,83.65) -- cycle ;
\draw  [color={rgb, 255:red, 208; green, 2; blue, 27 }  ,draw opacity=1 ][fill={rgb, 255:red, 208; green, 2; blue, 27 }  ,fill opacity=0.5 ][line width=2.25]  (410.7,107.65) .. controls (412.7,46.65) and (341.2,24.65) .. (294.2,83.65) .. controls (348.2,125.05) and (318.7,194.55) .. (256.2,149.55) .. controls (236.7,148.05) and (242.2,148.05) .. (219.2,148.05) .. controls (227.2,207.05) and (308.89,188.21) .. (355.7,174.05) .. controls (402.51,159.89) and (410.7,123.9) .. (410.7,107.65) -- cycle ;
\draw    (122.8,148.5) -- (599.8,149.5) ;
\draw [color={rgb, 255:red, 139; green, 87; blue, 42 }  ,draw opacity=1 ][line width=2.25]    (29.7,147.65) .. controls (31.37,145.99) and (33.04,146) .. (34.7,147.67) .. controls (36.37,149.34) and (38.03,149.34) .. (39.7,147.68) .. controls (41.37,146.02) and (43.04,146.03) .. (44.7,147.7) .. controls (46.37,149.37) and (48.03,149.37) .. (49.7,147.71) .. controls (51.37,146.05) and (53.04,146.06) .. (54.7,147.73) .. controls (56.37,149.4) and (58.03,149.4) .. (59.7,147.74) .. controls (61.37,146.08) and (63.04,146.09) .. (64.7,147.76) .. controls (66.37,149.43) and (68.03,149.43) .. (69.7,147.77) .. controls (71.37,146.11) and (73.04,146.12) .. (74.7,147.79) .. controls (76.37,149.46) and (78.03,149.46) .. (79.7,147.8) .. controls (81.37,146.14) and (83.04,146.15) .. (84.7,147.82) .. controls (86.36,149.49) and (88.03,149.5) .. (89.7,147.84) .. controls (91.37,146.18) and (93.03,146.18) .. (94.7,147.85) .. controls (96.36,149.52) and (98.03,149.53) .. (99.7,147.87) .. controls (101.37,146.21) and (103.03,146.21) .. (104.7,147.88) .. controls (106.36,149.55) and (108.03,149.56) .. (109.7,147.9) .. controls (111.37,146.24) and (113.03,146.24) .. (114.7,147.91) .. controls (116.36,149.58) and (118.03,149.59) .. (119.7,147.93) .. controls (121.37,146.27) and (123.03,146.27) .. (124.7,147.94) .. controls (126.36,149.61) and (128.03,149.62) .. (129.7,147.96) .. controls (131.37,146.3) and (133.04,146.31) .. (134.7,147.98) .. controls (136.37,149.65) and (138.03,149.65) .. (139.7,147.99) .. controls (141.37,146.33) and (143.04,146.34) .. (144.7,148.01) .. controls (146.37,149.68) and (148.03,149.68) .. (149.7,148.02) .. controls (151.37,146.36) and (153.04,146.37) .. (154.7,148.04) .. controls (156.37,149.71) and (158.03,149.71) .. (159.7,148.05) .. controls (161.37,146.39) and (163.04,146.4) .. (164.7,148.07) .. controls (166.37,149.74) and (168.03,149.74) .. (169.7,148.08) .. controls (171.37,146.42) and (173.04,146.43) .. (174.7,148.1) .. controls (176.37,149.77) and (178.03,149.77) .. (179.7,148.11) .. controls (181.37,146.45) and (183.04,146.46) .. (184.7,148.13) .. controls (186.36,149.8) and (188.03,149.81) .. (189.7,148.15) .. controls (191.37,146.49) and (193.03,146.49) .. (194.7,148.16) .. controls (196.36,149.83) and (198.03,149.84) .. (199.7,148.18) .. controls (201.37,146.52) and (203.03,146.52) .. (204.7,148.19) .. controls (206.36,149.86) and (208.03,149.87) .. (209.7,148.21) .. controls (211.37,146.55) and (213.03,146.55) .. (214.7,148.22) .. controls (216.36,149.89) and (218.03,149.9) .. (219.7,148.24) .. controls (221.37,146.58) and (223.03,146.58) .. (224.7,148.25) .. controls (226.36,149.92) and (228.03,149.93) .. (229.7,148.27) .. controls (231.37,146.61) and (233.03,146.61) .. (234.7,148.28) .. controls (236.36,149.95) and (238.03,149.96) .. (239.7,148.3) .. controls (241.37,146.64) and (243.04,146.65) .. (244.7,148.32) .. controls (246.37,149.99) and (248.03,149.99) .. (249.7,148.33) .. controls (251.37,146.67) and (253.04,146.68) .. (254.7,148.35) .. controls (256.37,150.02) and (258.03,150.02) .. (259.7,148.36) .. controls (261.37,146.7) and (263.04,146.71) .. (264.7,148.38) .. controls (266.37,150.05) and (268.03,150.05) .. (269.7,148.39) .. controls (271.37,146.73) and (273.04,146.74) .. (274.7,148.41) .. controls (276.37,150.08) and (278.03,150.08) .. (279.7,148.42) .. controls (281.37,146.76) and (283.04,146.77) .. (284.7,148.44) .. controls (286.37,150.11) and (288.03,150.11) .. (289.7,148.45) .. controls (291.37,146.79) and (293.04,146.8) .. (294.7,148.47) .. controls (296.36,150.14) and (298.03,150.15) .. (299.7,148.49) -- (304.3,148.5) -- (304.3,148.5) ;
\draw    (344.3,4.5) -- (343.3,227.5) ;
\draw    (306.3,141.5) -- (306.3,155.5) ;
\draw  [color={rgb, 255:red, 74; green, 144; blue, 226 }  ,draw opacity=1 ][fill={rgb, 255:red, 74; green, 144; blue, 226 }  ,fill opacity=0.51 ][line width=2.25]  (527,-9) .. controls (551,-34) and (619,-46) .. (651,-11) .. controls (652,32) and (653,179) .. (652,221) .. controls (638,268) and (541,249) .. (518,222) .. controls (504,183) and (476,150) .. (471,107) .. controls (470,70) and (512,35) .. (527,-9) -- cycle ;
\draw [color={rgb, 255:red, 144; green, 19; blue, 254 }  ,draw opacity=1 ][line width=1.5]  [dash pattern={on 5.63pt off 4.5pt}]  (18.8,119) .. controls (134.8,92) and (317.8,-17) .. (304.3,148.5) ;
\draw [color={rgb, 255:red, 65; green, 117; blue, 5 }  ,draw opacity=1 ][line width=1.5]  [dash pattern={on 5.63pt off 4.5pt}]  (25.8,120) .. controls (154.8,114) and (305.8,124) .. (304.3,148.5) .. controls (307.8,175) and (92.8,189) .. (25.8,186) ;
\draw [color={rgb, 255:red, 65; green, 117; blue, 5 }  ,draw opacity=1 ][line width=1.5]  [dash pattern={on 5.63pt off 4.5pt}]  (25.8,120) .. controls (154.8,114) and (431.8,94) .. (429.8,147) .. controls (431.8,221) and (104.8,186) .. (25.8,186) ;

\draw (303,163.4) node [anchor=north west][inner sep=0.75pt]    {$-1$};
\draw (352,151.9) node [anchor=north west][inner sep=0.75pt]    {$0$};
\draw (417.33,95.9) node [anchor=north west][inner sep=0.75pt]  [color={rgb, 255:red, 208; green, 2; blue, 27 }  ,opacity=1 ]  {$\ell _{0}$};
\draw (48,26.4) node [anchor=north west][inner sep=0.75pt]    {$\Re f( \xi ,z) =\Re f( \xi _{+} ,z)$};
\draw (281,52.23) node [anchor=north west][inner sep=0.75pt]  [color={rgb, 255:red, 0; green, 0; blue, 0 }  ,opacity=1 ]  {$\xi _{+}$};
\draw (472,178.4) node [anchor=north west][inner sep=0.75pt]  [color={rgb, 255:red, 74; green, 144; blue, 226 }  ,opacity=1 ]  {$\ell _{\infty }$};
\draw (226.17,30.4) node [anchor=north west][inner sep=0.75pt]  [color={rgb, 255:red, 65; green, 117; blue, 5 }  ,opacity=1 ]  {$\textcolor[rgb]{0.56,0.07,1}{\gamma^{(1)}_{+} }$};
\draw (139.17,156.4) node [anchor=north west][inner sep=0.75pt]  [color={rgb, 255:red, 65; green, 117; blue, 5 }  ,opacity=1 ]  {$\textcolor[rgb]{0.25,0.46,0.02}{\gamma ^{( 1)}}$};
\draw (383.17,190.4) node [anchor=north west][inner sep=0.75pt]  [color={rgb, 255:red, 65; green, 117; blue, 5 }  ,opacity=1 ]  {$\textcolor[rgb]{0.25,0.46,0.02}{\gamma ^{( 2)}}$};
\end{tikzpicture}
\end{center}
\caption{An illustration of the topological configuration when $\ell_{0}$ splits at $\xi_{+}$ with the ``opposite side'' interference from the branch cut.} \label{fig:conf_ell0_split_opposite_side_charlier}
\end{figure}

Recall $z\in\Omega_{-}$ is fixed, i.e., $\Re f(\xi_{+};z)<\Re f(\xi_{-};z)$. For brevity, we temporarily denote
\[
 \psi_{n}(\xi):=g(\xi)e^{-nf(\xi;z)}.
\]
Further, we define two particular curves. Let $\gamma^{(1)}$ denote a simple curve going from $-\infty+\ii$ in the upper half-plane $\Im\xi>0$ towards the point $-1$, where it crosses the real line, and continuous to $-\infty-\ii$ in the lower half-plane $\Im\xi<0$. Similarly, let $\gamma^{(2)}$ be a simple curve starting from $-\infty-\ii$, crossing the real line at a positive number, and continuing to $-\infty+\ii$. See Figure~\ref{fig:conf_ell0_split_opposite_side_charlier}.

An inspection of the asymptotic behavior of $\psi_{n}(\xi)$, for $\xi\to-1$ and $\Re\xi\to-\infty$, together with the assumptions $a>0$ and $a<\Re z$ shows that $\psi_{n}$ is integrable along $\gamma^{(1)}$ as well as~$\gamma^{(2)}$. Then a limit argument yields
\begin{equation}
 I_{n}(z;a)=\oint_{\gamma}\psi_{n}(z)\dd z=\int_{\gamma^{(1)}}\psi_{n}(z)\dd z+\int_{\gamma^{(2)}}\psi_{n}(z)\dd z,
\label{eq:I_split_gam1_gam2}
\end{equation}
for any Jordan curve $\gamma$ located in $\C\setminus((-\infty,-1]\cup\{0\})$ with $0$ in its interior. Next, we separately deduce the asymptotic behavior of both integrals on the right-hand side of~\eqref{eq:I_split_gam1_gam2}, for $n\to\infty$.

Let us denote by $\gamma_{+}^{(1)}$ and $\gamma_{-}^{(1)}$ parts of the curve $\gamma^{(1)}$ located in the upper and lower half-plane, respectively. In addition, suppose the saddle point $\xi_{+}$ is located in the upper half-plane, i.e., $\Im\xi_{+}>0$, as shown in Figure~\ref{fig:conf_ell0_split_opposite_side_charlier}. Then $\gamma_{+}^{(1)}$ can be homotopically deformed to a curve which fulfills assumption~(v) of Theorem~\ref{thm:saddle-point} and the saddle point method yields the asymptotic formula
\begin{equation}
 \int_{\gamma^{(1)}_{+}}\psi_{n}(z)\dd z=\psi_{n}(\xi_{+})\sqrt{\frac{2\pi}{n\partial_{\xi}^{2}f(\xi_{+};z)}}\left(1+O\left(\frac{1}{\sqrt{n}}\right)\right), \quad n\to\infty.
\label{eq:asympt_psi_gamma_1_plus}
\end{equation}

It turns out that the integral of $\psi_{n}$ along $\gamma_{-}^{(1)}$ is proportional to the integral of $\psi_{n}$ along $\gamma_{+}^{(1)}$. Indeed, by deforming $\gamma_{\pm}^{(1)}$ to the cut $(-\infty,-1)$ from above and below, respectively, we obtain
\[
\int_{\gamma^{(1)}_{\pm}}\psi_{n}(z)\dd z=\pm\int_{-\infty}^{-1}\psi_{n}(x\pm\ii0)\dd x=
\mp e^{\mp\ii\pi n(a-z)}\int_{-\infty}^{-1}\frac{e^{anx}}{x^{n+1}(-1-x)^{n(a-z)+1}}\dd x,
\]
where 
\[
\psi_{n}(x\pm\ii0):=\lim_{\substack{\xi\to x \\ \Im\xi\gtrless0}}\psi_{n}(\xi).
\]
Thus,
\[
\int_{\gamma^{(1)}_{-}}\psi_{n}(z)\dd z=-e^{-2\ii\pi n(a-z)}\int_{\gamma^{(1)}_{+}}\psi_{n}(z)\dd z,
\]
which together with~\eqref{eq:asympt_psi_gamma_1_plus} yields
\[
 \int_{\gamma^{(1)}}\psi_{n}(z)\dd z=\left(1-e^{-2\ii\pi n(a-z)}\right)\psi_{n}(\xi_{+})\sqrt{\frac{2\pi}{n\partial_{\xi}^{2}f(\xi_{+};z)}}\left(1+O\left(\frac{1}{\sqrt{n}}\right)\right), \quad n\to\infty.
\]
If $\xi_{+}$ is located in the lower-half plane, i.e, $\Im\xi_{+}<0$, one proceeds analogously and gets
\[
 \int_{\gamma^{(1)}}\psi_{n}(z)\dd z=\left(1-e^{2\ii\pi n(a-z)}\right)\psi_{n}(\xi_{+})\sqrt{\frac{2\pi}{n\partial_{\xi}^{2}f(\xi_{+};z)}}\left(1+O\left(\frac{1}{\sqrt{n}}\right)\right), \quad n\to\infty.
\]
In summary, we have the asymptotic formula
\begin{equation}
 \int_{\gamma^{(1)}}\psi_{n}(z)\dd z=\omega(\xi_{+};z)\psi_{n}(\xi_{+})\sqrt{\frac{2\pi}{n\partial_{\xi}^{2}f(\xi_{+};z)}}\left(1+O\left(\frac{1}{\sqrt{n}}\right)\right), \quad n\to\infty,
\label{eq:asympt_psi_gamma_1}
\end{equation}
where
\[
 \omega(\xi_{+};z):=1-\exp\left(-2\ii\pi n(a-z)\sign(\Im\xi_{+})\right)
\]
is a non-vanishing factor.

To deduce the asymptotic behavior of the integral of $\psi_{n}$ along $\gamma^{(2)}$, we let the level curves evolve to the next topologically different picture which occurs when $\Re f(\xi;z)=\Re f(\xi_{-};z)$. At this point, the two components $\ell_{0}$ and $\ell_{\infty}$ has to merge together at the sub-dominant saddle point $\xi_{-}$, see Figure~\ref{fig:conf_ell0_ellinf_merge_case2-charlier}. Then the curve $\gamma^{(2)}$ can be homotopically deformed so that the assumption~(v) of Theorem~\ref{thm:saddle-point} is fulfilled and the saddle point method implies the asymptotic formula 
\begin{equation}
 \int_{\gamma^{(2)}}\psi_{n}(z)\dd z=\psi_{n}(\xi_{-})\sqrt{\frac{2\pi}{n\partial_{\xi}^{2}f(\xi_{-};z)}}\left(1+O\left(\frac{1}{\sqrt{n}}\right)\right), \quad n\to\infty.
\label{eq:asympt_psi_gamma_2}
\end{equation}
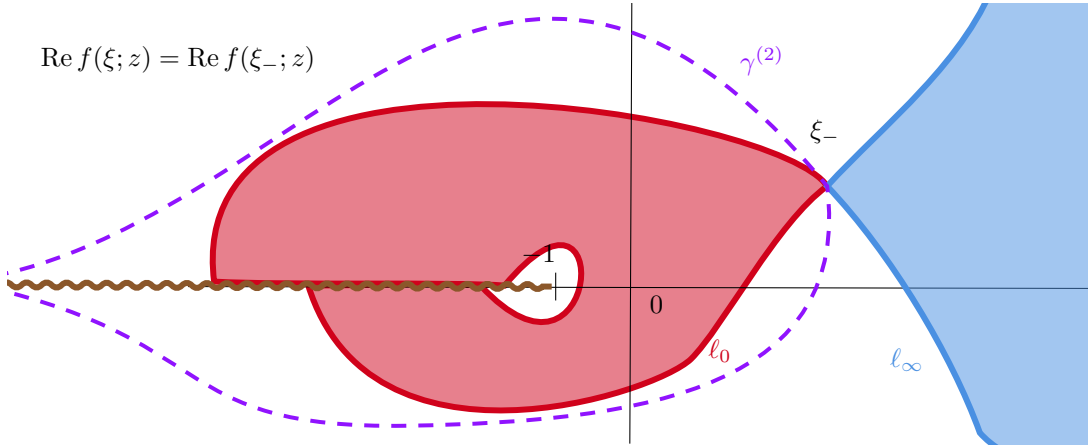
\begin{figure}[htb]
\begin{center}
\begin{tikzpicture}[x=0.75pt,y=0.75pt,yscale=-1,xscale=1]

\clip   (32.8,6) rectangle (571.8,228) ;
\draw  [color={rgb, 255:red, 208; green, 2; blue, 27 }  ,draw opacity=1 ][fill={rgb, 255:red, 208; green, 2; blue, 27 }  ,fill opacity=0.5 ][line width=2.25]  (441.8,98) .. controls (418.8,62) and (118.8,8) .. (135.8,146) .. controls (171.8,148) and (255.8,146) .. (280.8,148) .. controls (337.8,79) and (328.8,213) .. (268.8,148) .. controls (249.3,146.5) and (205.8,148) .. (182.8,148) .. controls (213.8,248) and (354.8,203) .. (373.8,185) .. controls (389.8,169) and (416.8,116) .. (441.8,98) -- cycle ;
\draw    (122.8,148.5) -- (599.8,149.5) ;
\draw [color={rgb, 255:red, 139; green, 87; blue, 42 }  ,draw opacity=1 ][line width=2.25]    (29.7,147.65) .. controls (31.37,145.99) and (33.04,146) .. (34.7,147.67) .. controls (36.37,149.34) and (38.03,149.34) .. (39.7,147.68) .. controls (41.37,146.02) and (43.04,146.03) .. (44.7,147.7) .. controls (46.37,149.37) and (48.03,149.37) .. (49.7,147.71) .. controls (51.37,146.05) and (53.04,146.06) .. (54.7,147.73) .. controls (56.37,149.4) and (58.03,149.4) .. (59.7,147.74) .. controls (61.37,146.08) and (63.04,146.09) .. (64.7,147.76) .. controls (66.37,149.43) and (68.03,149.43) .. (69.7,147.77) .. controls (71.37,146.11) and (73.04,146.12) .. (74.7,147.79) .. controls (76.37,149.46) and (78.03,149.46) .. (79.7,147.8) .. controls (81.37,146.14) and (83.04,146.15) .. (84.7,147.82) .. controls (86.36,149.49) and (88.03,149.5) .. (89.7,147.84) .. controls (91.37,146.18) and (93.03,146.18) .. (94.7,147.85) .. controls (96.36,149.52) and (98.03,149.53) .. (99.7,147.87) .. controls (101.37,146.21) and (103.03,146.21) .. (104.7,147.88) .. controls (106.36,149.55) and (108.03,149.56) .. (109.7,147.9) .. controls (111.37,146.24) and (113.03,146.24) .. (114.7,147.91) .. controls (116.36,149.58) and (118.03,149.59) .. (119.7,147.93) .. controls (121.37,146.27) and (123.03,146.27) .. (124.7,147.94) .. controls (126.36,149.61) and (128.03,149.62) .. (129.7,147.96) .. controls (131.37,146.3) and (133.04,146.31) .. (134.7,147.98) .. controls (136.37,149.65) and (138.03,149.65) .. (139.7,147.99) .. controls (141.37,146.33) and (143.04,146.34) .. (144.7,148.01) .. controls (146.37,149.68) and (148.03,149.68) .. (149.7,148.02) .. controls (151.37,146.36) and (153.04,146.37) .. (154.7,148.04) .. controls (156.37,149.71) and (158.03,149.71) .. (159.7,148.05) .. controls (161.37,146.39) and (163.04,146.4) .. (164.7,148.07) .. controls (166.37,149.74) and (168.03,149.74) .. (169.7,148.08) .. controls (171.37,146.42) and (173.04,146.43) .. (174.7,148.1) .. controls (176.37,149.77) and (178.03,149.77) .. (179.7,148.11) .. controls (181.37,146.45) and (183.04,146.46) .. (184.7,148.13) .. controls (186.36,149.8) and (188.03,149.81) .. (189.7,148.15) .. controls (191.37,146.49) and (193.03,146.49) .. (194.7,148.16) .. controls (196.36,149.83) and (198.03,149.84) .. (199.7,148.18) .. controls (201.37,146.52) and (203.03,146.52) .. (204.7,148.19) .. controls (206.36,149.86) and (208.03,149.87) .. (209.7,148.21) .. controls (211.37,146.55) and (213.03,146.55) .. (214.7,148.22) .. controls (216.36,149.89) and (218.03,149.9) .. (219.7,148.24) .. controls (221.37,146.58) and (223.03,146.58) .. (224.7,148.25) .. controls (226.36,149.92) and (228.03,149.93) .. (229.7,148.27) .. controls (231.37,146.61) and (233.03,146.61) .. (234.7,148.28) .. controls (236.36,149.95) and (238.03,149.96) .. (239.7,148.3) .. controls (241.37,146.64) and (243.04,146.65) .. (244.7,148.32) .. controls (246.37,149.99) and (248.03,149.99) .. (249.7,148.33) .. controls (251.37,146.67) and (253.04,146.68) .. (254.7,148.35) .. controls (256.37,150.02) and (258.03,150.02) .. (259.7,148.36) .. controls (261.37,146.7) and (263.04,146.71) .. (264.7,148.38) .. controls (266.37,150.05) and (268.03,150.05) .. (269.7,148.39) .. controls (271.37,146.73) and (273.04,146.74) .. (274.7,148.41) .. controls (276.37,150.08) and (278.03,150.08) .. (279.7,148.42) .. controls (281.37,146.76) and (283.04,146.77) .. (284.7,148.44) .. controls (286.37,150.11) and (288.03,150.11) .. (289.7,148.45) .. controls (291.37,146.79) and (293.04,146.8) .. (294.7,148.47) .. controls (296.36,150.14) and (298.03,150.15) .. (299.7,148.49) -- (304.3,148.5) -- (304.3,148.5) ;
\draw    (344.3,4.5) -- (343.3,227.5) ;
\draw    (306.3,141.5) -- (306.3,155.5) ;
\draw  [color={rgb, 255:red, 74; green, 144; blue, 226 }  ,draw opacity=1 ][fill={rgb, 255:red, 74; green, 144; blue, 226 }  ,fill opacity=0.51 ][line width=2.25]  (527,-9) .. controls (551,-34) and (619,-46) .. (651,-11) .. controls (652,32) and (653,179) .. (652,221) .. controls (638,268) and (541,249) .. (518,222) .. controls (504,183) and (473.8,132) .. (441.8,98) .. controls (469.8,67) and (512,35) .. (527,-9) -- cycle ;
\draw  [color={rgb, 255:red, 144; green, 19; blue, 254 }  ,draw opacity=1 ][dash pattern={on 5.63pt off 4.5pt}][line width=1.5]  (9.8,146) .. controls (141.8,173) and (109.05,220.5) .. (232.17,218.5) .. controls (355.3,216.5) and (453.8,198) .. (441.8,98) .. controls (272.8,-107) and (179.8,124) .. (9.8,146) -- cycle ;

\draw (288,126.4) node [anchor=north west][inner sep=0.75pt]    {$-1$};
\draw (352,151.9) node [anchor=north west][inner sep=0.75pt]    {$0$};
\draw (381.33,173.9) node [anchor=north west][inner sep=0.75pt]  [color={rgb, 255:red, 208; green, 2; blue, 27 }  ,opacity=1 ]  {$\ell _{0}$};
\draw (48,26.4) node [anchor=north west][inner sep=0.75pt]    {$\Re f( \xi;z) =\Re f( \xi _{-};z)$};
\draw (472,178.4) node [anchor=north west][inner sep=0.75pt]  [color={rgb, 255:red, 74; green, 144; blue, 226 }  ,opacity=1 ]  {$\ell _{\infty }$};
\draw (432,63.23) node [anchor=north west][inner sep=0.75pt]  [color={rgb, 255:red, 0; green, 0; blue, 0 }  ,opacity=1 ]  {$\xi _{-}$};
\draw (397.17,26.4) node [anchor=north west][inner sep=0.75pt]  [color={rgb, 255:red, 65; green, 117; blue, 5 }  ,opacity=1 ]  {$\textcolor[rgb]{0.56,0.07,1}{\gamma ^{( 2)}}$};
\end{tikzpicture}
\end{center}
\caption{An illustration of the topological configuration when $\ell_{0}$ and $\ell_{\infty}$ merge at $\xi_{-}$. The curve $\gamma^{(2)}$ fulfills the assumption~(v) of Theorem~\ref{thm:saddle-point}.} \label{fig:conf_ell0_ellinf_merge_case2-charlier}
\end{figure}

Since $\Re\xi_{+}<\Re\xi_{-}$, the right-hand side of~\eqref{eq:asympt_psi_gamma_2} decays exponentially faster when compared to the right-hand side of~\eqref{eq:asympt_psi_gamma_1}. It follows that the asymptotic behavior of~\eqref{eq:I_split_gam1_gam2}, for $n\to\infty$, is governed by the integral along the curve~$\gamma^{(1)}$. Hence, we may conclude that 
\[
I_{n}(z;a)=\omega(\xi_{+};z)\psi_{n}(\xi_{+})\sqrt{\frac{2\pi}{n\partial_{\xi}^{2}f(\xi_{+};z)}}\left(1+O\left(\frac{1}{\sqrt{n}}\right)\right), \quad n\to\infty.
\]
Very much the same approach, with the function $g$ replaced by $-g(\partial_{z} f)$, yields the asymptotic formula
\[
I_{n}'(z;a)=\frac{\partial f}{\partial z}(\xi_{+};z)\omega(\xi_{+};z)\psi_{n}(\xi_{+})\sqrt{\frac{2\pi}{n\partial_{\xi}^{2}f(\xi_{+};z)}}\left(1+O\left(\frac{1}{\sqrt{n}}\right)\right), \quad n\to\infty.
\]
Consequently, we end up with the desired ratio asymptotics~\eqref{eq:ratio_I_n_asympt} again.

C) \textbf{The case $a=\Re z$:} So far we have established the ratio limit formula~\eqref{eq:ratio_I_n_asympt}, and hence Theorem~\ref{thm:cauchy_transf_charlier}, for all $z\in\Omega_{-}$ such that $\Re z\neq a$. If $a\geq 1$, $\Omega_{-}$ has no intersection with the vertical line defined by equation $\Re z=a$. On the other hand, if $a\in(0,1)$, the limiting Cauchy transform $C_{\mu}$ extends continuously to the vertical cut $\Re z=a$ of $\Omega_{-}$. Consequently, the formula for $C_{\mu}(z)$ of Theorem~\ref{thm:cauchy_transf_charlier} remains true also for $z\in\Omega_{-}$ with $\Re z=a$.

In the end, we point out that, if one assumes $z\in\Omega_{+}$, rather than $z\in\Omega_{-}$, the whole discussion proceeds the same with interchanged roles of the saddle points $\xi_{+}$ and $\xi_{-}$. It follows the formula for $C_{\mu}(z)$ of Theorem~\ref{thm:cauchy_transf_charlier} for $z\in\Omega_{+}$, and the proof of Theorem~\ref{thm:cauchy_transf_charlier} is completed.

\section{An application to complex sampling Jacobi matrices}\label{sec:sampling_jacobi}

\subsection{A solvable case of complex sampling Jacobi matrices}

Results of Theorem~\ref{thm:charlier} can be equivalently interpreted as a description of the asymptotic eigenvalue distribution of a~concrete instance of the Jacobi matrix $J_{n}=J_{n}(\alpha,\beta)$, whose diagonal and off-diagonal entries are given by values of two continuous functions $\alpha,\beta:[0,1]\to\C$ sampled on the equidistant grid, i.e., 
\[
 \left(J_{n}(\alpha,\beta)\right)_{k,k}=\beta\left(\tfrac{k}{n}\right) \quad\mbox{ and }\quad 
 \left(J_{n}(\alpha,\beta)\right)_{k,k+1}=\left(J_{n}(\alpha,\beta)\right)_{k+1,k}=\alpha\left(\tfrac{k}{n}\right),
\]
for $k=1,\dots,n$ and $k=1,\dots,n-1$, respectively. Indeed, it follows readily from equations~\eqref{eq:jacobi_matrix_entries_charlier} and~\eqref{eq:char_pol_charlier} that roots of $P_{n}^{\mathrm{C}}$ coincide with eigenvalues of $J_{n}(\alpha,\beta)$ with sampling functions
\begin{equation}
 \beta(x)=x \quad\mbox{ and }\quad \alpha(x)=\ii\sqrt{ax}.
\label{eq:beta_alpha_charlier}
\end{equation}

A determination of the asymptotic eigenvalue distribution of $J_{n}(\alpha,\beta)$ has been studied in an even greater generality for matrices which are not necessarily tridiagonal and appear under various names such as generalized Toeplitz, Toeplitz-like, variable coefficient Toeplitz, locally Toeplitz or Kac--Murdock--Szeg{\" o} matrices. The last name refers to a pioneering work~\cite{kac-mur-sze_53} due to Kac, Murdock, and Szeg{\" o}, who derived the asymptotic eigenvalue distribution for certain generalized \textbf{self-adjoint} Toeplitz matrices. Later on, Tilli, with a motivation coming from a numerical analysis of ODEs, introduced the concept of locally Toeplitz sequences and rediscovered the result of~\cite{kac-mur-sze_53} in~\cite{til_98} in a slightly different setting. The concept has been further developed to a theory of generalized locally Toeplitz sequences~\cite{serra-capizzano17,serra-capizzano18}. Similarly, Kuijlaars and Van Assche, with a motivation coming from the theory of orthogonal polynomials, deduced the asymptotic eigenvalue distribution of matrices $J_{n}(\alpha,\beta)$ in the self-adjoint setting in~\cite{kui-van_99}.

Methods applied in the above mentioned works heavily depend on the self-adjointness of the studied matrices. In case of $J_{n}(\alpha,\beta)$, it is equivalent to the assumption that the sampling functions $\alpha$ and $\beta$ are real-valued. Much less is known when the self-adjointness or normality is not assumed. To our best knowledge, among the reasonable general classes of non-normal matrices, for which the asymptotic eigenvalue distribution has been found, there belong only the banded Toeplitz matrices~\cite{bot-gru_05,hir_67,sch-spi_60} and Toeplitz matrices with a~rational symbol~\cite{day_75a,day_75b}. Few more works relevant in the non-self-adjoint setting are~\cite{gol-ser-cap_07, til_99}. It has been conjectured in~\cite{bou-loy-tyl_18} (in a greater generality) that the eigenvalues of complex sampling Jacobi matrices $J_{n}(\alpha,\beta)$, with complex-valued $\alpha,\beta\in C([0,1])$, cluster in sets that exhibit the same properties as in the case of complex banded Toeplitz matrices, for $n\to\infty$. More concretely, the asymptotic eigenvalue distribution is expected to exist and should be supported in a~connected set which is a union of finite number of pairwise disjoint open analytic arcs and finite number of the so-called exceptional points, see~\cite[Chp.~11]{bot-gru_05} for details. Theorem~\ref{thm:charlier} is in agreement with this conjecture.

\subsection{Concluding remarks}

\begin{enumerate}[1.]
\item Our motivation to analyze the concrete sequence of polynomials $P_{n}^{\mathrm{C}}$ stems from the above open problem in a hope that solvable examples may shed some light to the problem in general. Nevertheless, it is obvious that the applied method could be hardly worked out in a more general setting. First, a relatively simple generating function formula implying the contour integral representation~\eqref{eq:integr_repre_spm_charlier} is needed. Second, it is obvious that the discussion of possible configurations from Subsection~\ref{subsec:spm_charlier} would become too complicated, for example, in cases of more than two simple saddle points. In other words, the assumption~(v) of Theorem~\ref{thm:saddle-point} is a significant limiting factor to the method.  

\item On the other hand, the saddle point method provides us with the asymptotic behavior of $P_{n}^{\mathrm{C}}$ and its derivative, for $n\to\infty$, which is more information than actually needed when the goal is a determination of the limiting Cauchy transform. To this end, we need only to know an asymptotic formula for the ratio~\eqref{eq:cauchy_transf_root_count}. Usual strategies for a derivation of the ratio asymptotics, which are based on limiting formulas for powers of traces of analyzed matrices, see~\cite{kac-mur-sze_53}, or formulas for the limiting logarithmic potential in a neighborhood of infinity, see~\cite{kui-van_99}, are not applicable in the non-self-adjoint setting because a~compactly supported measure in $\C$ is not determined by its moment sequence or the~logarithmic potential (nor Cauchy transform) known in a neighborhood of infinity.

\item One can observe that the asymptotic distribution of roots of~$P_{n}^{\mathrm{C}}$ is, in a sense, determined by the complex function $f$ given by~\eqref{eq:def_f_charlier}. Although this is a common feature in several more solvable models, see for example~\cite{bla-sta_jmaa20}, it is not clear if it can be generalized to some extend. Even if such a function $f$ would exist in a greater generality, it is not obvious how it would be encoded in the determining functions~\eqref{eq:beta_alpha_charlier}. Moreover, only certain parts of the curves in the $z$-plane given by equation~\eqref{eq:re_f_eq} form the support of the limiting measure. Which parts should be relevant in general remain also unclear. 

\item There are at least two more recent and advanced methods that were successfully applied in similar problems. First, it is a fairly general GRS theory related to critical measures and trajectories of quadratic differentials~\cite{mar-fin-rak_11, mar-fin-rak_16, rak_12}. Second, the nonlinear steepest descent method of the Riemann--Hilbert problem~\cite{dei-zho_93,dei_99} proved to be another powerful tool in asymptotic analysis of polynomial families. Neither method, however, does seem to be readily applicable when a sequence of matrices $J_{n}(\alpha,\beta)$ is given. Rather than recurrence coefficients (determined by the functions $\alpha$, $\beta$), the usual starting point for an application of the aformentioned techniques is a~possibly non-Hermitian orthogonality relation. 

\item On the other hand, methods of the Riemann--Hilbert problem could be applied to deduce an asymptotic behavior of zeros of the Charlier polynomials $C_{n}^{(a_{n})}(nz)$ assuming, for example, that $\lim_{n\to\infty}a_{n}/n= a<0$, similarly as it has been done for the Laguerre polynomials. It would be an interesting generalization of the current study.

\end{enumerate}

\section*{Acknowledgement}
The research of P.~B. was supported by the grant No.~201/12/G028 of the Czech Science Foundation.
F.~{\v S}. acknowledges financial support of the EXPRO grant No.~20-17749X of the Czech Science Foundation.

\bibliographystyle{acm}

\end{document}